\newtheorem{theorem}{Theorem}
\theoremstyle{definition}
\newtheorem{definition}{Definition}[section]
\newtheorem{assumption}{Assumption}[section]
\newtheorem{remark}{Remark}[section]
\newtheorem{lemma}{Lemma}[section]
\newtheorem{example}{Example}[section]
\patchcmd{\@eqnnum}{\normalcolor}{\color{myblue}}{\typeout{eqnnum patch: OK!}}{\typeout{eqnnum patch: Oh, dear!}}
\newcommand{\inte}[1]{{\kern0pt#1}^{\mathrm{o}}}
\newcommand\myatop[2]{\genfrac{}{}{0pt}{}{#1\hfill}{#2\hfill}}
\definecolor{myblue}{RGB}{0, 128, 128}
\def\tagform@#1{\maketag@@@{\color{myblue}(#1)}}
\title{\LARGE \bf
Iterated Function Systems: \\ A Comprehensive Survey}
\author{Ramen Ghosh \thanks{\href{mailto:ramen.ghosh@ucdconnect.ie}{School of Electronic and Electrical Engineering, University College Dublin, Ireland, ramen.ghosh@ucdconnect.ie}}  \and Jakub Mare\v{c}ek \thanks{\href{mailto:jakub.marecek@fel.cvut.cz}{Faculty of Electrical Engineering, Czech Technical University of Prague, The Czech Republic, jakub.marecek@fel.cvut.cz}}
}
\begin{document}
\maketitle
\begin{abstract}
We provide an overview of \emph{iterated function systems} (IFS), where randomly chosen state-to-state maps are iteratively applied to a state.
We aim to summarize the state of art and, where possible, identify fundamental challenges and opportunities for further research. 
\end{abstract}

\section{Introduction}\label{sec:intro}
The fundamental mathematical tool used in this study is \emph{discrete-time Markov chains}, which are generated by random repetition of functions and exhibit long-term statistical behaviour.
Allow us to encourage readers through an example from \cite[Section 2.1, Chapter 1]{Arnab2016}; similar examples could be found in \cite[Chapter 3]{Barnsley1993}. Assume a particle is located at the origin, i.e., at $(0,0)$ of a two-dimensional Euclidean plane. Consider a die with four sides denoted by the integers $1,2,3,4$. The particle  moves on the plane according to rules as shown in Table~\ref{table:1}:
 \begin{table}[h]
\begin{center}
\begin{tabularx}
{0.8\textwidth} { 
  | >{\raggedright\arraybackslash}X 
  | >{\centering\arraybackslash}X 
  | >{\raggedleft\arraybackslash}X | }
\hline
\hline
A number appears in the dice & Where the particle moves \\
\hline
\hline
$1$ & $(0.8x+0.1, 0.8y+0.04)$\\
\hline
$2$ & $(0.6x+0.19, 0.6y+0.5)$\\
\hline
$3$ & $(0.466(x-y)+0.266, 0.466(x+y)+0.067)$\\
\hline
$4$ & $(0.466(x+y)+0.456, 0.466(y-x)+0.434)$\\
\hline
\hline
\end{tabularx}
\caption{Rules describing how the particle moves according to the outcome in the dice}
\label{table:1}
\end{center}
\end{table}
Initially, the particle is at $(0,0)$; now, imagine $2$ shows up after throwing the die. From the rules described  in the table, the particle will move to $(0.6x+0.19, 0.6y+0.5)$ where $(x,y)$ is its current coordinate. For us, $(x,y)=(0,0)=X_0$ the initial position of the particle, and since $2$ has appeared, the next position or co-ordinate of the particle is
\begin{align*}
(0.6\times 0+0.19, 0.6\times 0+0.5)=(0.19,0.6)=X_1.
\end{align*}
We mark the point $(0.25,0.4)$ with a pen. We throw the die again, and depending on the number, and the particle moves to a different point from  $(0.25,0.4)$. If we repeat these steps for, say, $20,000$ times (Of course, that requires the help of a computer) and mark those points on the plane, we get a picture similar to Figure~\ref{fig:maple}.
\begin{figure}[bt]
\centering
\includegraphics[width=0.61\columnwidth]{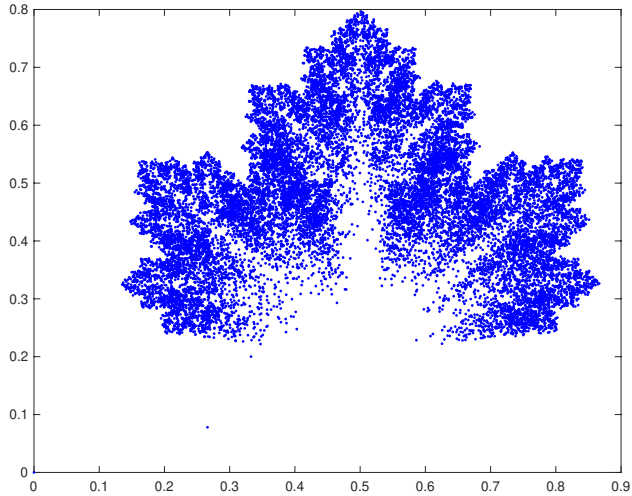}
\caption{Random trajectory of the particle described in Table \ref{table:1}}
\label{fig:maple}
\end{figure}
These points visited by the particle are insufficient to cover all points in the leaf and produce a fully formed naturally shaped leaf devoid of holes. After several iterations, a two-dimensional maple leaf is the particle's sole visited or so-called accessible position. One may demonstrate that with probability one, the sequence of points $X_0, X_1, X_2, \dots$, referred to as a sample path or trajectory, would ultimately pass through any portion of the \emph{Maple leaf}. The exciting aspect of the game is that even though the particle's travel was random in each step, the movement's long-term behaviour took on the form of a Maple leaf. The outcome will be the same regardless of how many individuals play the game concurrently or how many times. Particular emphasis should be placed on the terms \emph{``with probability one''}, which will be explained subsequently. The trajectory will approach the same point if the dice consistently display the same face. However, for a fair dice, this occurrence has zero chance. The act of continually tossing the dice is referred to as \emph{``iteration''}, and the rule dictating how the particle should advance to the next step from the present position is known as a \emph{``function''}. It could be easily seen that the four rules described in the table above can be seen as the following functions $w_1, w_2, w_3, w_4$ acting on $\begin{bmatrix}x\\y\end{bmatrix}\in\mathbb R^2$ as follows:
\begin{align*}
w_1\left(\begin{bmatrix}x\\y\end{bmatrix}\right)&=\begin{bmatrix}0.8&0\\0&0.8\end{bmatrix}\begin{bmatrix}x\\y\end{bmatrix}+\begin{bmatrix}0.1\\0.04\end{bmatrix}\\    
w_2\left(\begin{bmatrix}x\\y\end{bmatrix}\right)&=\begin{bmatrix}0.6&0\\0&0.6\end{bmatrix}\begin{bmatrix}x\\y\end{bmatrix}+\begin{bmatrix}0.19\\0.5\end{bmatrix}\\
w_3\left(\begin{bmatrix}x\\y\end{bmatrix}\right)&=\begin{bmatrix}0.466&-0.466\\0.466&0.466\end{bmatrix}\begin{bmatrix}x\\y\end{bmatrix}+\begin{bmatrix}0.266\\0.067\end{bmatrix}\\
w_4\left(\begin{bmatrix}x\\y\end{bmatrix}\right)&=\begin{bmatrix}0.466&0.466\\-0.466&0.466\end{bmatrix}\begin{bmatrix}x\\y\end{bmatrix}+\begin{bmatrix}0.456\\0.434\end{bmatrix}\\
\end{align*}
The chaotic game seen above illustrates the \emph{``random repetition of functions''} (where the randomness appeared due to the throwing of the die).

\subsection{Some Motivating Applications of IFS}
\begin{example}\emph{Non-linear time series in Statistics:}\label{ex:NLT}
A first-order autoregression model is defined in non-linear time series analysis as \begin{align*}
X_k=\phi X_{k-1}+Z_k,\quad k\in \mathbb{Z},
\end{align*} is critical in time series analysis because it represents a strong (i.i.d.) white noise sequence. Establishing the requirements for a stationary solution to the first-order autoregressive equation is simple. A nearly comparable process is provided by 
\begin{align}\label{eq:nlinr-tme-srs}
Y_{k}=A_kY_{k-1}+B_k, \quad k\in \mathbb{Z},
\end{align}
where $(A_k, B_k)$ is a series of independently generated random vectors in $\mathbb R^2$. These models are critical in non-linear time series modelling. The presence of strict-sense and weak-sense stable solutions to the equation \eqref{eq:nlinr-tme-srs} is a natural issue. Under certain weak circumstances, \cite[Theorem 4.1]{Douc2014} established the existence of strict-sense stationery. The theorem has been further generalized to functional autoregressive process using IFS techniques \cite[Theorem 4.40,  4.41]{Douc2014} and based on these results, a weak-sense stationary solution to \eqref{eq:nlinr-tme-srs} has been shown in \cite[Theorem 4.47]{Douc2014} under the assumptions \cite[Assumption A4.43-A4.45]{Douc2014}.
\end{example}

\begin{example}\emph{IFS in finance:}\label{ex:FIN} 
Uniquely ergodic IFS with constant probability are valuable alternatives to the standard binomial models (see \cite{Van2006}) of stock price development that are used to estimate one-period option pricing \cite[Section 3.8]{Anatoly2013}. A generalised binomial model \cite{Cox1979} is found using IFS with state-dependent probability for call option prices, bond valuation, stock price evolution, and interest rates. See  \cite[Section 4.6.1, 4.6.4]{Anatoly2013} and \citep{Bahsoun2005(2)}.
\end{example}

\begin{example}\emph{Stability of congestion management, e.g., in Transmission Control Protocol:}\label{ex:CM}
The distribution of limited resources among several agents in different scientific fields and applications is one of the most challenging problems. For example, the bandwidth available for communication on computer networks is limited. Each connection can simultaneously support several data flows, each seeking to optimize its portion of the available bandwidth. A circumstance like that occurs when many electric cars are charged concurrently at the same charging station and compete for the same electricity or charge rate. A scalable and robust solution to this challenge is required.
Additionally, a decentralized technique is appropriate when there is limited interaction between users and maintaining anonymity is critical. The method \emph{additive increase and multiplicative decrease} (AIMD) has been quite effective in resolving the issue \citep{Chiu1989}. In actuality, a more complex and challenging scenario occurs. To address such scenarios, the stochastic linear AIMD method and the stochastic additive increase and non-linear decrease (AINLD) algorithm developed in \cite[Chapter 6, 7, 8]{Corless2016} have been extensively investigated utilizing IFS and their ergodic features. For example, in \citep[Chapter 7]{Corless2016} the AIMD with state-dependent transition probabilities has been modelled as
\begin{align}\label{eq:aimd-dyna}
w(k+1)=A(k)w(k) \quad k=0,1,2,\dots,
\end{align}
where $w(k)\in S_n$ is vector-valued random variable and defined as
\begin{align*}
S_n=\{w(k)=(w_1(k),\dots, w_n(k))\in \mathbb R^n:\sum_{i=1}^{n}w_i(k)=c\},
\end{align*}
where for each $i$, $w_i(k)$ is the amount of share consumed by the $i^{th}$ user and $k=0,1,2,\dots$ represents discrete time instants, $c$ represent the overall capacity of the resource accessible to the whole system, and $c=1$ is assumed for the purpose of easy computation, AIMD matrix $A(k)$ is given by
\begin{align*}
A(k)=\begin{bmatrix}\beta_1(k)&\cdots&0\\
\vdots&\ddots&\vdots\\
0&\cdots&\beta_n(k)\end{bmatrix}+\frac{1}{\sum_{j=1}^{n}\alpha_j}\begin{bmatrix}\alpha_1(k)\\\vdots\\\alpha_n(k)\end{bmatrix}\begin{bmatrix}\left(1-\beta_1(k)\right)&\cdots&\left(1-\beta_n(k)\right)\end{bmatrix}
\end{align*}
where $\alpha_i(k)>0$ is the rate of growth of the agent $i$ after the $k^{th}$ capacity event (a time when the resource reaches its capacity constraint, i.e., $\sum_{i=1}^{n} w_i(k)=c=1$) and $\beta_i(k)$ denote multiplicative decrease quantity at the $k^{th}$ capacity event depending on the response of the $i^{th}$ user at the capacity event, $0\le \beta_i(k)\le 1$.  

It can be shown that the AIMD matrices are non-negative, column stochastic. The set of matrices arising through \eqref{eq:aimd-dyna} is denoted by finite set of indices $\mathcal S$ and $\mathcal{A}=\{A_{\sigma}: \sigma\in \mathcal{S}\}$. The technique is stochastic because the index $\sigma(k)$ is created randomly for each event $k$; that is, $\sigma(k)$ is a random variable. Thus,
\begin{align*}
A(k)= A_{\Sigma(k)}.
\end{align*}
The equation  \eqref{eq:aimd-dyna} may be used to represent the dynamical system of interest in the networks of AIMD flows. We refer to a state-dependent AIMD model in the paper \cite[Chapter 7]{Corless2016}. 

Consider $\{p_j: j\in \mathcal S\}$ as a collection of probability functions from the simplex $S_n$ into the closed interval $[0,1]$ that fulfil \begin{align*}
\sum\limits_{j\in \mathcal S} p_{j}\left(w\right)=1 \quad \text{ for all } w\in S_n.
\end{align*}
Here, $p_j(w)$ denotes the probability that the matrix $A_j$ occurs when the state or the share vector is $w$, i.e.,
\begin{align}\label{eq:ocuurence-index}
\mathbb{P}\left(\sigma(k)=j\mid x(k)=x\right)=p_{j}(x) \quad k=0,1,2,\dots  \end{align}
or,
\begin{align}\label{eq:ocuurence-mat}
\mathbb{P}\left(A(k)=A_j\mid w(k)=w\right)=p_{j}(w) \quad k=0,1,2,\dots    
\end{align}
Take note that \eqref{eq:ocuurence-index} and \eqref{eq:ocuurence-mat}  create a stochastic AIMD algorithm, and they are an IFS with the state (or place) dependent probabilities, and more precisely, a Markov chain on $S_n$, whose state transition probabilities are provided by 
\begin{align}\label{eq:transition-prob-aimd}
\mathbf{P}(w, \mathcal A)= \mathbb{P}\left(w(k+1)\in \mathcal A\mid w(k)=w\right)=\sum\limits_{j: A_{j}w\in \mathcal A} p_{j}(w) \quad \text{ for all } w\in S_n, \text{ and for any event } \mathcal A.   
\end{align}
Having created AIMD's mathematical framework, we may question how the AIMD algorithm distributes resources efficiently among the participants. Can the algorithm ensure equal shares for all network agents? Several other relevant research questions are highlighted in \cite[Chapter 1, Section 1.4]{Corless2016}. To know more about this direction please see \cite{Dumas2002, Fabian2006, Bob2007, Wirth2006, King2006, Shorten2007, Corless2012, Martin2012,Fioravanti2017, Fioravanti2019, Bob2016, Corless2016, Jakub2017, Wirth2019, Lesniak2021, Ramen2021}.
\end{example}

\begin{example}\emph{Closed-loop models within sharing economy:}\label{ex:CLM} 
Imagine if $\mathcal{S}_1,\dots, \mathcal{S}_N$ an $N$ number of agent-based system with  with a controller $\mathcal{C}$, filter $\mathcal{F}$, in a closed-loop feedback as in Figure \ref{fig:CL-system}. At time $k$, $\mathcal{C}$ generates a signal  $\pi(k)$. As a result, agents adjust the way they utilize the resource. Due to unpredictability in the agent's reaction to the control signal, randomized control signal, or random perturbations of the control signal, for all $i$, the $i^{th}$ agent's state $x_i(k)$ at time $k$ is treated as a random variable, and we note that $\mathcal{C}$ has no access to  $x_i(k)$ or $y(k)=\sum\limits_{i=1}^{N} x_i(k)$ but only able to get access to the error signal $e(k)$, which indicates a difference between the value $\hat{y}(k)$ that is produced by the filter and the desired output value  $y(k)$. Smart grid and smart city applications may be shown with this simple setup. In this closed-loop feedback system, a stochastic difference equation governs the state of each agent:
\begin{align}\label{eq:agents-dynamics}
x_i(k+1)=w_{\sigma_k}(x_i(k)),
\end{align}
where $w_1, w_2,\dots, w_N$ are taken as general response functions of $N$ agents in the network, $ \sigma_1,\sigma_2,\dots$ are $\{1,2,\dots, N\}$ valued independent identically distributed (i.i.d) discrete random variables. This representation is precisely an IFS system. 

The controller and filter take all conceivable initial state distributions into account to guarantee that the system's long-term behaviour is acceptable for all agents involved. For each stable linear controller and filter, we seek conditions where the feedback loop controls total resource consumption and converges to a unique invariant measure. The above concept and strategy have been investigated in a variety of other engineering fields, including smart-city and smart-grid control \citep{Fioravanti2017, Fioravanti2019}, congestion control \citep{Shorten2005}, resource allocation \citep{Jakub2017}, load aggregation and operation of virtual power plants \citep{Marecek2021}, social sensing platforms \citep{Ramen2021}, and model predictive control \citep{Slava2021}.
\begin{figure}[t!]
\centering
\includegraphics[width=0.45\columnwidth]{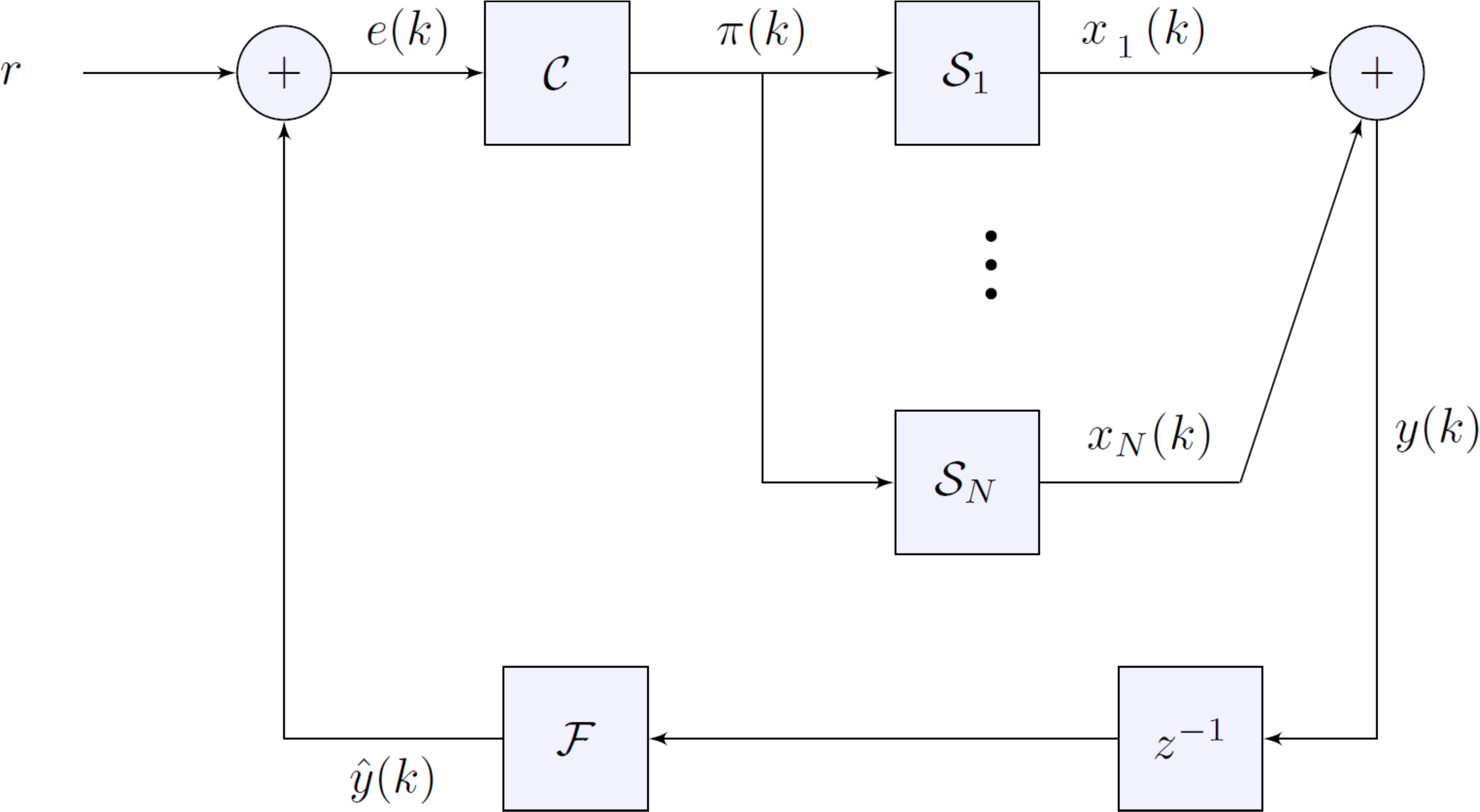}\\[6mm]
\caption{An illustration of basic closed-loop model, following \cite{Fioravanti2017, Fioravanti2019}}
\label{fig:CL-system}
\end{figure}
\end{example}

\begin{example}\emph{Opinion dynamics:} \label{ex:OPD}
There has long been a focus of research at the junction of economics and dynamical systems; for example, Acemoglu's pioneering work utilizing IFS \cite{Acemoglu2010, Acemouglu2013}, and the references therein. \emph{The Friedkin-Johnsen model} \cite{Friedkin2011} accurately depicts opinion dynamics characterized by heterogeneity. \cite{Karl2019} shown that the \emph{Friedkin-Johnsen model} is a specific example of their expanded \emph{Rescorla-Wagner model} \cite{Rescorla1972}. Surprisingly, this extended model with random time-varying topology is exactly a recurrent IFS, and the suggested model's internal states exhibit both convergence and ergodicity.
\end{example}

\begin{example}\emph{Stochastic dynamic decision model}:\label{ex:SDDM} 
Following \cite[Chapter 1]{Hinderer1970}, let a set $\mathcal{X}\ne \phi$ be called the state space, and let $\mathcal A\ne \phi $ be another set called action space. Let $W=(w_k)_{k\in \mathbb N}$ is a sequence of maps $w_k$ from certain sets $H_k\subseteq \bar{H}_k=\mathcal{X}$ or $\mathcal{X}\times \mathcal{A}\times\cdots\times \mathcal{A}\times \mathcal{X}$ ($2k-1$ factors), according as $k=1$ or $\ge 2$, into the class of all non-empty subsets of $\mathcal{A}$ such that
\begin{align*}
H_1&=\mathcal{X},
H_{k+1}=\{(h,a,x): h\in H_k, a\in w_k(h), x\in \mathcal{X} \}, k\in \mathbb N.
\end{align*}
$\bar{H}_k$ is called the set of histories at time $k$, $H_k$ the set of all admissible histories at time $k$, and $w_k(h)$ the set of all admissible actions at time $k$ under admissible history $h$. Denoting $S_k=\{(h,a): h\in H_k, a\in w_k(h)\}$ we can write $H_{k+1}=S_k\times \mathcal{X}, k\in \mathbb N$. Let $p_0$ be a probability distribution on $\mathcal{X}$ called the initial distribution and $p_k$ is a transition probability function from $S_k$ to $\mathcal{X}$, i.e. for any $(h, a)\in S_k$, $p_k(h, a, \cdot)$ is a probability distribution on $\mathcal{X}$, called transition law from time $k$ to $k+1$. And, finally, let $r_k$ is an extended real-valued function on $S_k$ called reward during the time interval $(k,k+1)$. A collection $\{\mathcal{X}, \mathcal{A}, W, (p_k)_{k\in \mathbb N}, (r_k)_{k\in \mathbb N}\}$ with the above properties is called a stochastic dynamic decision model. Choosing a policy and an optimality criterion is necessary to define a dynamic optimisation problem under the suitable assumption on $r_k$. A deterministic admissible policy is  $f=(f_k)_{k\in \mathbb N}$ of maps $f_k: \mathcal{X}^k\to \mathcal{A}$ such that
\begin{align*}
f_k(y)\in w_k\left(h_k^f(x)\right), x\in \mathcal{X}^k, k\in \mathbb{N},
\end{align*}
where
\begin{align*}
h_k^f(x):=\left\{\begin{array}{ll}
(x_1), &\text{ if } k=1\\
(x_1, f_1(x_1), x_2, f_2(x_1, x_2),\dots, f_{k-1}(x_1, \dots, x_{k-1}), x_k), & \text { if } k \geq 2.
\end{array}\right.   
\end{align*}
denotes the history at time $k$ obtained when the sequence $x=(x_1, \dots, x_k)$ of states occurred and actions were chosen according to $f$. The application of a policy $f$ generates a stochastic process, the decision process determined by $f$, is as follows: Starting at some $k=1$ at some point $x_1$, selected from $\mathcal{X}$ according to the initial probability distribution $p_0$, one takes action $f_1(x_1)$ whereupon one gets the reward $r_1(x_1, f_1(x_1))$, and the system moves to some $x_2\in \mathcal{X}$ selected according to  $p_1(x_1, f_1(x_1), \cdot)$. Then the action  $ f_2(x_1, x_2)$ is taken, giving the  $r_2(x_1, f_1(x_1), x_2, f_2(x_2))$ and the system moves to some $x_3\in \mathcal{X}$ selected according to  $p_2(x_1, f_1(x_1), x_2, f_2(x_1, x_2), \cdot)$ and so on. Thus for a given policy, the sequence of states $\{x_k\}_{k\in \mathbb N}$ then appears equivalent to the IFS. 
\end{example}

\begin{example}\emph{In DNA replication model:}\label{ex:DNA}
The DNA replication kinetic equation is shown in \cite[Equation 2]{Gaspard2017}. New DNA strands are created by molecular machinery utilizing templates from previous generations. To correctly solve replication kinetic equations, one may use iterative function systems that run along the template sequence and provide copy sequence statistics and kinetic and thermodynamic parameters. DNA polymerase local velocity distributions with fractal and continuous sequence heterogeneity are studied using this approach, as well as the transition between linear and sub-linear copy growth rates.
\end{example}

\subsection{Organization of the Survey and Past Surveys on IFS}
\label{subsec:ch2-past-surveys}

Our survey is organized as follows: 
Sections \ref{sec:IFS-cpt-space}--\ref{sec:CLT-IFS-invp} survey key research areas within the current IFS literature.
Section \ref{sec:geometry} of the survey focuses on structural results, which can be seen as the geometry of the unique invariant measure.
Finally, Section \ref{sec:further} highlights several possible research directions.
In Figure~\ref{fig:des-1}, \ref{fig:des-2} and \ref{fig:des-3}, we have broken down the key areas to ease navigating the survey. 
In the appendices, we provide pointers to key papers witihin applications of IFS. 

We should also like to point to the excellent and well-known surveys of \cite{Diaconis1999}, and \cite{Iosifescu2009},
and the lesser-known surveys and reviews of \cite{Kaijser1981, Stenflo2012(s), Stenflo1998, Letac1986, Chamayou1991, Niclas2005(1), Bhattacharya2003, Athreya2003(4), Athreya2016, Fuh2004, Majumdar2009,kunze2011fractal, Denker2016}. For a survey on a random iteration of quadratic functions, see \cite{Athreya2001}. 
The sheer fact that our survey appears more than two decades after \cite{Diaconis1999} makes it possible to cover more of the recent developments and thus be more comprehensive. 

\section{Introduction and Historical Background of IFS}\label{sec:ch2-ifswp-intro}
This part will concentrate on IFS concepts and conclusions critical for understanding how the theory might be used for various practical uses. The objective of this part is twofold: to present a brief overview of IFS theory, concentrating on a few instances relevant to image generation, and to summarise numerous additional conclusions of general interest relevant to this thesis to keep this paper self-contained as feasible. After briefly discussing some essential facts about metric spaces, contractive transformations, and affine transformations, the rest of this section will explore the definitions and a few fundamental features of IFS. We refer \cite{Barnsley1993, Edgar2007, Peitgen2006} for a more in-depth discussion.

The term IFS has become well-known and popular in pure and applied science, particularly mathematics, probability, quantum physics, mathematical biology, computer science, electrical engineering, and economics, over the last seventy years.  Numerous IFS theories derive from the \emph{theory of random systems with complete connections}. According to \cite{Iosifescu1990}'s book, the first explicit definition of the idea of \emph{dependency with complete connection} was created by \cite{Mihoc1935}, who used a discrete state space. The idea of random systems with complete connection was initially established in 1963 by \cite{Iosifescu1963}. A few applications of \emph{random systems with complete connection} have been recognised in the extensively investigated field of theory of \emph{learning models} in \cite{Norman1972, Isaac1962, Burton1993}. In this topic, \cite{Karlin1953} wrote a seminal study in which he studied a learning model that can be thought of as an \emph{IFS with place-dependent probability} on an interval of the real line. The proof of \cite[Theorem 36]{Karlin1953} had some gap which later led to some exciting counterexamples in the IFS with a finite number of maps with place dependent probabilities, see for example \cite{Stenflo2001(2), Keane1972, Barnsley1988(3), Kaijser1994, Kaijser1981} for further comments on Karlin's work \cite{Karlin1953}.

\cite{Barnsley1985} first named the word IFS and has garnered much attention since then. Their study sparked widespread interest in field applications involving the representation of real-world images utilising IFS's two-dimensional transformation. It was well-known that IFS could accurately depict a complicated object with just a few carefully selected parameters and that this system could be coupled to make more complex pictures.

\subsection{Mathematical Preliminaries on IFS}\label{subsec:ch2-nota-dfn-mp}
Before we go any further, let us review some fundamental concepts, notations, and nomenclature. Throughout this article, we assume $\mathcal X\ne \emptyset$, and $\rho: \mathcal{X}\times \mathcal{X}\to [0,\infty)$ is a metric on it, we shall denote $\left(\mathcal X,\rho\right)$ the metric space consisting of the set $\mathcal{X}$ endowed with the metric $\rho$. We will often be assumed to be \emph{complete and separable, i.e., a Polish space}. The requirement that the metric space $\left(\mathcal X,\rho\right)$ is to be complete will be apparent upon the statement of the contraction mapping theorem and specification of its role in developing the theory of IFS.  

A $\sigma$-algebra of $\mathcal X$ is a collection of subsets of $\mathcal{X}$ that is closed by complements, countable unions, and countable intersections operation includes $\mathcal{X}$. The smallest $\sigma$-algebra, which contains all the open sets of $\mathcal X$,  is called the Borel $\ sigma$ algebra. $\mathcal{B}\left(\mathcal X\right)$ is the Borel sigma-algebra on  $\mathcal X$, and $\mathcal{A}$ is always an event in $\mathcal B\left(\mathcal X\right)$. $C\left(\mathcal X,\mathbb R\right)$ is the Banach space that corresponds to all real-valued continuous functions operating on  $\mathcal X\to \mathbb R$ endowed with the supremum norm $\left\|\cdot\right\|_{\infty}$.  ${C}_{\text{b}}\left(\mathcal X,\mathbb R\right)$ is structurally identical to $C(\mathcal X,\mathbb R)$, except its functions are continuous and bounded. $\mathcal{M}\left(\mathcal X\right)$ is the real vector space that contains all signed finite Borel measures on $ \mathcal X$ that include $\mathcal{M}_{\text{f}}\left(\mathcal X\right)$, which is the space that contains all positive measures. $\mathcal{M}_{\text{p}}\left(\mathcal X\right)$ represents space of all probability measures on  $\mathcal X$ included in $\mathcal{M}_{\text{f}}\left(\mathcal X\right)$.  $\mathcal B_{\text{bm}}\left(\mathcal X, \mathbb R\right)$ denotes the space of all bounded, measurable, real-valued functions on $\mathcal X$ and $w: \mathcal X\to \mathbb R$ is always continuous, Lipschitz, and measurable and in addition to possibly having other properties which will be mentioned accordingly. 

An appropriate probability triplet $(\mathcal{X}, \mathcal{E}, \mathbb P)$, where $\mathbb P$ is the probability measure, the set $\mathcal{X}\ne \emptyset$, and  $\mathcal{E}$ is a collection of events, is considered throughout the article. $\mathbb P_{x}$  denote the probability measure in a context where the initial condition is taken as $x$, more precisely, when an event $\mathcal{A}$ has probability $\mathbb P_x(\mathcal{A})=1$, we say that the event occurs almost surely $\mathbb P_{x}$ or in short a.s $\mathbb P_x$. The expectation operator is always denoted by $\mathbb E$ and $\mathbb E_{\mu}$ when taken with respect to a probability measure $\mu$.

$[m]:= \{1,2,\dots, m\}$ is a finite set of natural numbers from $1$ to $m<\infty$, $\mathbb R,\mathbb N,\mathbb Z$ denote the set of real, natural numbers and integers respectively.  $\mathbb{R}^n$ be the $n$ fold Cartesian product of $\mathbb{R}$, by  $\mathbb R^{+}$ we mean the set $\{x\in\mathbb R: x>0\}$. Throughout the article a point $x \in \mathbb{R}^n$ will be represented by a $n\times 1$ column vector $\left(x_1,x_2,\dots,x_n\right)^{\top}$, where $\top$ is usual transpose notation. A familiar example of a complete metric space is given by $\left(\mathbb{R}^n,\rho\right)$, where $\rho$ is the Euclidean metric on $\mathbb{R}^n$ defined by for any two points $x=(x_1,x_2,\dots,x_n)^{\top}, y=(y_1,y_2,\dots,y_n)^{\top}\in \mathbb{R}^n$:
\begin{align}\label{eq:ch2-mp-euclid-dist}
\rho\left(x,y\right)=\left\{\sum\limits_{i=1}^{n} \left(y_i-x_i\right)^2\right\}^{\frac{1}{2}}.
\end{align}
Given a metric space $\left(\mathcal{X},\rho\right)$,  $\left(\mathcal{H}\left(\mathcal{X}\right),\rho_{\mathrm{h}}\right)$ be another metric space formed by the collection of nonempty, compact subsets of $\mathcal{X}$ which is denoted by $\mathcal{H}\left(\mathcal{X}\right)$ and $\rho_{\mathrm{h}}$ be the Hausdorff metric to be defined momentarily. First define the distance from $x$ to $Y$ as:
\begin{align}\label{eq:ch2-mp-dist-pt-set}
\rho\left(x, Y\right)=\min_{y\in Y}\{\rho(x,y)\}\; \text{ for } X, Y \in \mathcal{H}\left(\mathcal{X}\right).
\end{align}
Since $y\mapsto \rho(x,y)$ is a continuous map and $Y\ne \emptyset$ and compact, the minimum is attained. Next, define $\rho\left(X, Y\right)$, the distance between two compact sets in $\mathcal{H}\left(\mathcal{X}\right)$:
\begin{align}\label{eq:ch2-mp-dist-set-set}
\rho\left(X, Y\right)=\max_{x\in X}\{\rho(x,Y)\}\; \text{ for } X, Y \in \mathcal{H}\left(\mathcal{X}\right).
\end{align}
For the same reason as above, the maximum is also attained. One should notice that, in general, $\rho\left(X, Y\right)$ and $\rho\left(Y, X\right)$ may not be same, i.e $\rho\left(\cdot, \cdot\right)$ is not symmetric on $\mathcal{H}\left(\mathcal{X}\right)\times \mathcal{H}\left(\mathcal{X}\right)$, and hence is not a metric \cite[Section 2.6, Exercise 6.7]{Barnsley1993}.
Now we are in a position to define the Hausdorff metric $\rho_{\mathrm{h}}$:
\begin{align}\label{eq:ch2-mp-dist-fract-set-set}
\rho_{\mathrm{h}}\left(X, Y\right)=\max\{\rho(X,Y), \rho(Y,X)\}\;\text{ for } X, Y \in \mathcal{H}\left(\mathcal{X}\right).
\end{align}
We refer \cite[Section 2.6, Exercise 6.15]{Barnsley1993} for the fact that $\left(\mathcal{H}\left(\mathcal{X}\right),\rho_{\mathrm{h}}\right)$ is indeed a metric space.

Given a non-empty set $\mathcal{X}$, by a \emph{self-transformation or self map} on it, we mean a map $w$ with domain and range is $\mathcal{X}$, i.e., $w:\mathcal{X}\to \mathcal{X}$.  A very important class of transformations of one metric space into itself or another, which we shall use frequently, consists of the collection of contractive transformations. Given two metric spaces $\left(\mathcal{X},\rho_1\right)$ and $\left(\mathcal{Y},\rho_2\right)$, a transformation $T:\mathcal{X}\to \mathcal{Y}$ is said to be a \emph{contraction} if and only if there exists a real number $r\in [0,1]$, such that for all $x_1,x_2\in \mathcal{X}$ 
\begin{align}
\rho_2\left(T(x_1),T(x_2)\right)\le r \rho_1\left(x_1,x_2\right).
\end{align} 
$r$ is called a \emph{contractivity factor} for $T$. $T$ is called a \emph{strict contraction} if $r<1$. Thus when $T$ is a self-transformation on $\mathcal{X}$, i.e. $T: \mathcal{X}\to \mathcal{X}$, then it acts on pairs of points in $\mathcal{X}$ by bringing them closer together, their distance is reduced by a factor at least $r$. Let $M_n\left(\mathbb{R}\right)$ denotes the set of all $n\times n$ matrices with entries from $\mathbb{R}$, $T$ is called an \emph{affine transformation} on $\mathbb{R}^n$ if for some $A\in M_{n}(\mathbb{R})$ and a vector $b\in \mathbb{R}^n$, $T$ is of the form
\begin{align}
T: \mathbb{R}^n\to \mathbb R^n \text{ and } T(x)=Ax+b.
\end{align}
The next theorem describes a contractive transformation on a complete metric space to itself and known \emph{contraction mapping theorem}. This result confirms the existence of a single fixed point for the contraction map $T$ and proposes a mechanism for calculating a fixed point.
\begin{theorem}
\cite[Section 3.6, Theorem 1]{Barnsley1993}\label{th:ch2-fpt}
Let $T: \mathcal{X}\to \mathcal{X}$ be a strict contraction on a complete metric space $\left(\mathcal{X}, \rho\right)$. Then, there exists a unique point $x^{\star}\in \mathcal{X}$ such that $T\left(x^{\star}\right)=x^{\star}$. Furthermore, for any $x\in \mathcal{X}$, we have
\begin{align*}
\lim\limits_{n\to \infty} T^{n}\left(x\right)=x^{\star},
\end{align*}
where $T^n$ denotes the $n$-fold composition of $T$ with itself.
\end{theorem}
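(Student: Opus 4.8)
The plan is to prove the Banach contraction mapping theorem in two stages: first establish existence of a fixed point by showing that the iterates $T^n(x)$ form a Cauchy sequence (which converges by completeness), and then establish uniqueness by a direct argument using the strict contraction inequality. Throughout, let $r < 1$ be the contractivity factor of $T$.

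First I would fix an arbitrary point $x \in \mathcal{X}$ and define the sequence $x_n := T^n(x)$. The key observation is that consecutive terms contract geometrically: applying the contraction property repeatedly gives
\begin{align*}
\rho(x_{n+1}, x_n) = \rho(T^n(x_1), T^n(x_0)) \le r^n \rho(x_1, x_0).
\end{align*}
From here I would bound the distance between arbitrary terms $x_m, x_n$ with $m > n$ using the triangle inequality and summing the geometric series:
\begin{align*}
\rho(x_m, x_n) \le \sum_{k=n}^{m-1} \rho(x_{k+1}, x_k) \le \rho(x_1, x_0) \sum_{k=n}^{m-1} r^k \le \rho(x_1, x_0) \frac{r^n}{1 - r}.
\end{align*}
Since $r < 1$, the right-hand side tends to $0$ as $n \to \infty$, so $(x_n)$ is Cauchy. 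Here is precisely where completeness of $(\mathcal{X}, \rho)$ is essential: it guarantees that the Cauchy sequence converges to some limit $x^{\star} \in \mathcal{X}$, and this is the point the theorem predicts.

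Next I would verify that $x^{\star}$ is genuinely a fixed point. Since a strict contraction is (Lipschitz) continuous, I can pass the limit through $T$: from $x_{n+1} = T(x_n)$, taking $n \to \infty$ yields $x^{\star} = T(x^{\star})$. The convergence statement $\lim_{n\to\infty} T^n(x) = x^{\star}$ is then immediate, since $x^{\star}$ was defined as exactly that limit for the arbitrary starting point $x$. For uniqueness, I would suppose $y^{\star}$ is another fixed point and apply the contraction inequality to the pair of fixed points:
\begin{align*}
\rho(x^{\star}, y^{\star}) = \rho(T(x^{\star}), T(y^{\star})) \le r\, \rho(x^{\star}, y^{\star}).
\end{align*}
Since $r < 1$, this forces $\rho(x^{\star}, y^{\star}) = 0$, hence $x^{\star} = y^{\star}$.

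I do not anticipate a genuinely hard obstacle in this proof, as it is a classical argument; the only subtle points to handle carefully are making the geometric-series bound rigorous (ensuring the sum telescopes correctly and that the tail estimate is uniform in $m$) and explicitly invoking completeness at the right moment rather than assuming convergence. One should also note that the convergence limit $x^{\star}$ is a priori independent of the starting point $x$, which is guaranteed retroactively by the uniqueness argument — so strictly speaking one should either argue uniqueness first or remark that any two limits coincide by the same contraction estimate.
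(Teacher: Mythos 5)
Your proof is correct and complete; it is the classical Picard-iteration argument (geometric contraction of successive iterates, Cauchy sequence, completeness, continuity of $T$ to pass to the limit, and uniqueness from the contraction inequality). The paper itself gives no proof of this statement --- it simply cites \cite[Section 3.6, Theorem 1]{Barnsley1993} --- and your argument is essentially the standard one found in that reference, including the careful remarks about where completeness enters and why the limit is independent of the starting point.
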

Consider a metric space $\left(\mathcal{X},\rho\right)$ and a finite set of strictly contractive transformations $w_k:\mathcal{X}\to \mathcal{X}$ $1\le k\le m$, with respective contractivity factors $r_k$. Define a  transformation $w: \mathcal{H}\left(\mathcal{X}\right)\to \mathcal{H}\left(\mathcal{X}\right)$, where $\mathcal{H}\left(\mathcal{X}\right)$ is the collection of nonempty, compact subsets of $\mathcal{X}$, by:
\begin{align}\label{eq:ch2-trans-on-fract-space}
w(X)=\bigcup\limits_{k=1}^{m} w_n\left(X\right), \quad \text{for any } X\in \mathcal{H}\left(\mathcal{X}\right).
\end{align}
Since $w$ is a strict contraction with contractivity factor $r^{\star}=\max\limits_{1\le k\le m}\{r_i\}$\cite[Section 3.7, Lemma 5]{Barnsley1993},
and $\left(\mathcal{H}\left(\mathcal{X}\right),\rho_{\mathrm{h}}\right)$ is a complete metric space\cite[Section 2.7, Theorem 1]{Barnsley1993}, we conclude that $w$ posseses a unique fixed point by Theorem~\ref{th:ch2-fpt}. Thus if we denote the fixed point by $X^{\star}$, then the map $w$ satisfy the following self-covering condition
\begin{align}\label{eq:ch2-self-covering}
w(X^{\star})=\bigcup\limits_{k=1}^{m} w_k\left(X^{\star}\right).
\end{align}
We are now able to define what is called hyperbolic IFS formally.
\begin{definition}[Hyperbolic IFS; \cite{Hutchinson1981}]
A hyperbolic IFS $\{\mathcal{X}; w_1,w_2,\dots,w_m\}$ consists of a complete metric space $\left(\mathcal{X}, \rho\right)$ and a finite set of strictly contractive transformations $w_k: \mathcal{X}\to \mathcal{X}$ with contractivity factors $r_k$, for $k=1,2,\dots, m$. The maximum  $r^{\star}=\max\limits_{1\le k\le m}\{r_i\}$ is called \emph{a contractivity factor of the IFS} and the unique fixed point $X^{\star}\in \mathcal{H}\left(\mathcal{X}\right)$ of the transformation defined in \eqref{eq:ch2-trans-on-fract-space} is called \emph{the attractor of the IFS}. 
\end{definition}

\begin{definition}[Modulus of uniform continuity]\label{dfn:ch2-modulus-uni-cont}
Let $w:\mathcal X\to \mathbb R$ be a uniformly continuous function. The \emph{modulus of uniform continuity} is the function $\phi: [0,\infty)\to [0,\infty)$ defined by
\begin{align}\label{eq:ch2-modulus-uni-cont}
\phi(t):=\sup _{\myatop{x, y \in \mathcal X} {\rho(x, y) \leq t}}\{|w(x)-w(y)|\}.    
\end{align}
\end{definition}

\begin{definition}[Dini continuous]\label{dfn:ch2-dini-cont}
A function $w:\mathcal X\to \mathbb R$ is said to be \emph{Dini continuous} if it has modulus of continuity $\phi(t)$ such that 
\begin{align}\label{eq:ch2-dini-cont}
\frac{\phi(t)}{t} \text{ is integrable over } (0,\delta) \text{ for some } \delta>0     
\end{align} and we say that $\phi$ satisfies the \emph{Dini's condition}.
\end{definition}
Note that Lipschitz and Holder continuous functions are Dini continuous and that Dini continuity is a stronger condition than continuity.
\begin{definition}[Total-variation (TV) distance; Proposition 4.2 in \cite{Levin2006}]\label{dfn:ch2-tv-dist}
Let $\nu_1, \nu_2 \in \mathcal{M}_{\text{p}}\left(\mathcal{X}\right)$, then throughout the article, the \emph{total-variation distance} between $\nu_1$ and $\nu_2$ will be denoted by $d_{\text{tv}}(\nu_1,\nu_2)$, and is expressed as
\begin{align}\label{eq:ch2-tv-dist}
d_{\text{tv}}\left(\nu_1,\nu_2\right)=\sup_{ \mathcal{A}\in \mathcal{B}\left(\mathcal X\right)}\left|\nu_1\left(\mathcal A\right)-\nu_2\left(\mathcal A\right)\right|.
\end{align}
If $\mathcal X$ is finite with cardinality $n<\infty$, one can show the above expression is equivalent to the following:
\begin{align}\label{eq:ch2-equiv-tv-dist}
d_{\text{tv}}\left(\nu_1,\nu_2\right)=\frac{1}{2}\sum_{i=1}^n\left| \nu_1\left(i\right) -\nu_2\left(i\right)\right|.
\end{align}
\end{definition}

\begin{definition}[Prohorov metric \cite{Billingsley2013}]\label{dfn:ch2-proho-dist}
Let $\nu_1, \nu_2\in \mathcal M_p(\mathcal X)$ then the \emph{Prohorov distance} between $\nu_1$ and $\nu_2$ is defined as 
\begin{align}\label{eq:ch2-proho-dist}
d_{\text{pr}}(\nu_1,\nu_2)=\inf\{\delta>0: \nu_1(\mathcal K)<\nu_2(\mathcal K_{\delta})+\delta \text{ and } \nu_2(\mathcal K)<\nu_1(\mathcal K_{\delta})+\delta 
\end{align}
for all compact subsets  $\mathcal K \text{ of } \mathcal X\}$
and $\mathcal K_{\delta}:=\{ x\in \mathcal X: \rho(x, \mathcal K)< \delta\}$. \cite{Dudley1976} is a standard reference for further discussions on \emph{Prohorov metric}.
\end{definition}

\begin{definition}[Kolmogorov-Smirnov distance]\label{dfn:ch2-kolmo-smir-dist}
For $\mathcal X=\mathbb R$, \emph{Kolmogorov-Smirnov distance} between $\nu_1, \nu_2 \in \mathcal{M}_{\text{p}}\left(\mathcal{X}\right)$ is denoted by $d_{\text{ks}}$ and is defined by
\begin{align}\label{eq:ch2-kolmo-smir-dist}
d_{\text{ks}}\left(\nu_1,\nu_2\right)= \sup\limits_{x\in \mathcal X}\left|\nu_1\left((-\infty, x]\right)-\nu_2\left((-\infty,x]\right)\right|
\end{align}
In, other words if $F_{\nu_1}$ and $F_{\nu_2}$ denotes the distribution function corresponding to the probability measure $\nu_1$ and $\nu_2$ respectively then  \eqref{eq:ch2-kolmo-smir-dist} is equivalent to the following expression:
\begin{align}\label{eq:ch2-kolmo-smir-dist-equ}
d_{\text{ks}}(\nu_1,\nu_2)= \sup\limits_{x\in \mathcal X}\left|F_{\nu_1}(x)-F_{\nu_2}(x)\right|    
\end{align}
\end{definition}

The following metric is due to \emph{Kantorovich and Rubinstein} \cite{Evans1999, Mass1998, Villani2009}, also known as \emph{Wasserstein-$1$ distance}.
\begin{definition}[Wasserstein-$1$ distance; Remark 6.5, p. 95 in \cite{Villani2009}]\label{dfn:ch2-Wass-dist}
Let $\mathcal{W}_1$ denote the space of all Lipschitz maps with Lipschitz constant $1$, i.e
\begin{align*}
\mathcal W_1=\{w\in \mathcal C\left(\mathcal X, \mathbb R\right): |w(x)-w(y)|\le \rho(x,y) \quad \forall x, y\in \mathcal X\}.
\end{align*} 
For $\nu_1, \nu_2\in \mathcal M_{\text{p}} \left(\mathcal X\right)$,  $d_{\text{ws}}\left(\nu_1, \nu_2\right)$ denotes the Wasserstein-$1$ distance between $\nu_1, \nu_2\in \mathcal M_{\text{p}} \left(\mathcal X\right)$  and is given by:
\begin{align}\label{eq:ch2-Wass-dist}
d_{\text{ws}}\left(\nu_1, \nu_2\right)=\sup\limits_{w\in \mathcal W_{1}}\left[\int\limits_{\mathcal X} w\mathrm{d}\nu_1-\int\limits_{\mathcal X} w \mathrm{d}\nu_2\right].
\end{align}
\end{definition}
For more on different kinds of metrics on the space of probability measures on metric space and their usefulness, see \citep{Parthasarathy1967, Rachev1991, Rao1998, Gibbs2002, Gibbs2000, Kravchenko2006, Mostafa2011}.

\begin{remark}\label{rem:MK-metric}
The \emph{Monge-Kantorovich} metric is another name for the metric mentioned above \cite[Definition B.28]{Kunze2012}. In the context of mass transportation, this metric was developed. Please see \cite{Hanin1992, Hanin1999, Kravchenko2006, Vershik2006} for more general findings and a historical timeline. Convergence in this metric is related to weak convergence, as seen from the definition of $d_{\text{ws}}$. Convergence of probability measures in a compact metric space in  $d_{\text{ws}}$ and weak convergence are equivalent \cite[Proposition B.29]{Kunze2012}. When $\mathcal{X}$ is compact, the space $\left(\mathcal{M}_\text{p},d_{\text{ws}}\right)$ is complete  \cite{Hanin1992, Hanin1999, Kravchenko2006, Weaver2018}. Furthermore, this distance is difficult to calculate since finding an optimal $w\in \mathcal{W}_1$  to maximize the difference in \eqref{eq:ch2-Wass-dist} is generally not straightforward.
\end{remark}

\begin{definition}[Markov Chain on Metric Space]\label{dfn:ch2-MC}
Let $(\Omega, \mathcal{E}, \mathbb P)$ be a probability space. A Markov chain on a metric space $\mathcal{X}$ is a sequence of $\mathcal{X}$ valued random variables $\{X_k\}_{k\ge 0}$ whose interdependence satisfies the Markov property, i.e,
\begin{align}
\mathbb P\left[X_{k+1}\in \mathcal{A}\mid X_0, X_1,\dots, X_k\right]=\mathbb P\left[X_{k+1}\in \mathcal{A}\mid X_k=x\right]\quad \text{ for all } \mathcal{A}\in \mathcal{B}(\mathcal{X}).
\end{align}
\end{definition}

\begin{definition}[Stochastic Kernel or Transition Probability Function]\label{dfn:ch2-stoch-ker}
For each $x\in \mathcal{X}$ and $\mathcal{A}\in \mathcal{B}(\mathcal{X})$, let 
\begin{align}
\nu(x, \mathcal{A}):=\mathbb{P}\left[X_{k+1}\in \mathcal{A}\mid X_k=x\right]
\end{align}
is called a stochastic kernel on $\mathcal{X}$ or a transition probability function of the chain $\{X_k\}$, which means
\begin{itemize}
\item [(a)] $\forall x\in \mathcal{X}$, $\nu(x,\cdot)$ is a probability measure on $\mathcal{X}$, hence a function from  $\mathcal{X}\times \mathcal{B}(\mathcal X)$ to $[0,1]$, so if we fix some $x\in \mathcal{X}$, $\nu(x, \mathcal A)=\sum\limits_{y\in A}p_{x,y}$. Thus, $\nu(x, A)$ is simply the probability of reaching some state in $\mathcal {X} $ given that the current state is $x$. When $\mathcal{X}$ is discrete or, more precisely, finite, the transition kernel is simply a  transition matrix $\nu$ with entries $p_{x,y}$ as follows:
\begin{align*}
p_{x,y}=\mathbb P\left[X_n=x\mid X_{n-1}=y\right]\quad x,y\in \mathcal{X}.
\end{align*}
\item[(b)] $\forall \mathcal{A}\in \mathcal{B}(\mathcal{X})$, $\nu(\cdot, \mathcal{A} )$ is a measurable function from $\mathcal{X}\times \mathcal{B}(\mathcal X)$ to $\mathcal{X}$.
\end{itemize}
One can introduce the \emph{$1$-step transition probability} as
\begin{align}\label{eq:ch2-one-step-transi}
\mathbb{P}\left[X_{1}\in \mathcal{A}\mid X_0=x\right]=\nu(x, \mathcal{A}).
\end{align}
One can also introduce the \emph{$k$-step transition probability} as
\begin{align}\label{eq:ch2-n-step-transi}
\mathbb{P}\left[X_{k+1}\in \mathcal{A}\mid X_0=x\right]=\nu^k(x, \mathcal{A}).
\end{align}
\end{definition}
\begin{definition}[Feller Chain; \cite{Lasserre2003}]\label{dfn:ch2-Feller-chain}
Let $\{X_k\}_{k\ge 0}$ be a Markov chain on a compact metric space $\mathcal{X}$, and then the chain is said to be a Feller chain if for every continuous function $\phi:\mathcal{X}\to \mathbb{R}$, the function
\begin{align}
x\mapsto \mathbb E\left[\phi(X_1)\mid X_0=x\right] 
\end{align}
is continuous.
\end{definition}
\begin{definition}[Weak-Feller Chain; \cite{Lasserre2003}] \label{dfn:ch2-weak-feller}
Let $\{X_k\}_{k\ge 0}$ be a Markov chain on a metric space $\mathcal{X}$ with transition probability function $\nu$. Then the chain is said to satisfy the Weak-Feller property if every sequence $x_n\in \mathcal{X}$ such that $x_n\to x\in \mathcal{X}$, $\nu f(x_n)\to \nu f(x)$ whenever $f$ is bounded and continuous function on $X$.
\end{definition}
\begin{definition}\label{dfn:ch2-weak-flr-prop}
A Markov chain is said to have weak Feller property if its transition probability kernel (see Definition~\ref{dfn:ch2-stoch-ker}) maps a continuous function to another continuous function, i.e., it leaves $C(\mathcal{X})$ invariant.
\end{definition}
\begin{definition}\label{dfn:ch2-dirac-msr}
Let $\mathcal{A}\subseteq \mathcal X$, the Dirac measure on a point $x\in \mathcal{ X}$ is generally denoted by $\delta_x$ and defined as 
\[ \delta_x(\mathcal{A})=\begin{cases} 
      1 & x\in \mathcal{A} \\
      0 & x\in \mathcal{A}^c \\
   \end{cases}
\]
\end{definition}

\begin{remark}\label{rem:IFS-MC-unique}
Any discrete-time Markov chain can be generated by an \emph{IFS with probabilities} see \cite[Section 1.1]{Kifer2012} or \cite[Page 228]{Bhattacharya2009}, although such representation is not unique, see, Stenflo \cite{Stenflo1999}.
\end{remark}

\begin{figure}[ht!]
    \centering
    \includegraphics[width=1.02\textwidth]{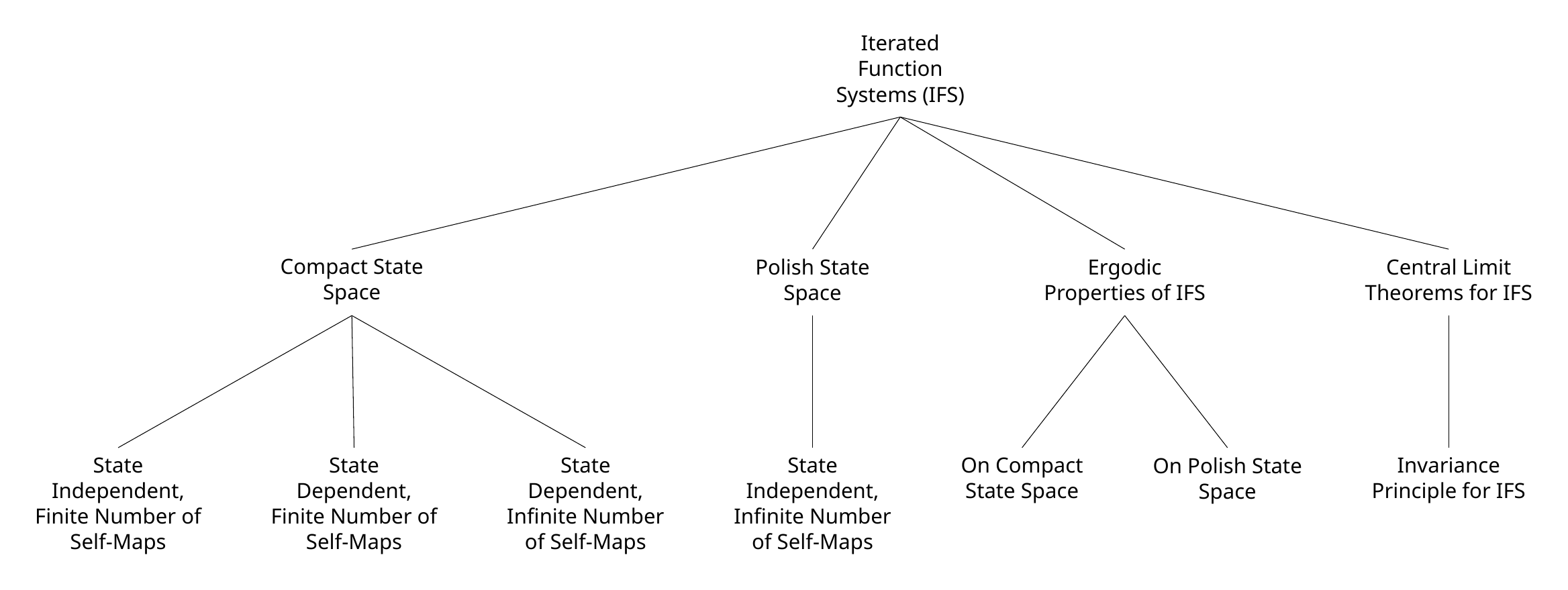}
    \caption{Sections \ref{sec:IFS-cpt-space}--\ref{sec:CLT-IFS-invp} of the survey try to partition the current IFS literature. While we do not yet provide a ``plant atlas view, '' we hope to facilitate the retrieval of a particular theorem. }
    \label{fig:des-1}
\end{figure}

\section{IFS on Compact State Space}\label{sec:IFS-cpt-space}
\subsection{State Independent IFS with Finite Number of Self Maps}
\label{sec:ch2-ifswsindpwfnm}
\label{subsec:ch2-dfn-ifswsindpwfnm}
\begin{definition}[IFS with state-independent probability with finite number of maps]\label{df:ch2-ifswsindp}
Let $\left(\mathcal X, \rho\right)$ be a compact metric space. Let $\{w_i\}_{i=1}^{m}$ be continuous self-transformations on $\mathcal X$ and $\mu$ be a probability measure on $[m]$. Let $\sigma_0, \sigma_1,\ldots$ be i.i.d discrete random variables taking values in $[m]$ and
\begin{align}\label{eq:ch2-ifswsip-disc-prob}
\mathbb P\left(\sigma_i=j\right)=\mu\left(j\right) \quad j\in [m]\text{ and } i=0, 1, 2, \dots, m.
\end{align}
An \emph{IFS with state-independent probability} is a Markov process that is realized by the recursion
\begin{align}\label{eq:ch2-ifswsindp}
X_{k+1}&:= w_{\sigma_k}(X_k), \quad k=0, 1, 2,\dots,
\end{align}
where $X_0\sim \nu\in \mathcal M_p\left(\mathcal X\right)$ is assumed to be independent of $\sigma_k \; \forall k$.
\end{definition}
We describe below how the iteration progress in Algorithm \ref{algo:ch2-ifswsidp}.

\begin{algorithm}[H]
\caption{State-Independent IFS Iteration}
\label{algo:ch2-ifswsidp}
\begin{algorithmic}[1]
\Statex \textbf{Input}: Initial value \(X_0\), \(m\) functions $\{w_k\}_{k=1}^{m}$, probability measure \(\mu\) over $[m]$
\Statex \textbf{Output:} A sequence of random variables \(X_k\) with Markov property (see Definition~\ref{dfn:ch2-MC})
\Statex Step 1: Set $n = 0$ 
\Statex Step 2: Repeat the following:
\Statex Step 2.1: Draw \((\sigma_n)\)  from \(\mu\)
\Statex Step 2.2: $X_{n+1} =  w_{\sigma_n}\left( X_n\right)$
\Statex Step 3: Return  {\(\{X_k\}_{k\in\mathbb{N}}\)}
\end{algorithmic}
\end{algorithm}

\begin{definition}[Markov operator]\label{dfn:ch2-markov-op}
To describe the evolution of the Markov process described by \eqref{eq:ch2-ifswsindp}, we define an operator, known as \emph{Markov operator} $\mathrm{P}$,
\begin{align}\label{eq:ch2-Markov-ifswsip}
\mathrm{P}: C\left(\mathcal{X}, \mathbb{R}\right)\to C\left(\mathcal{X}, \mathbb{R}\right),
\mathrm{P}\left(w\left(x\right)\right)=\mathbb{E}\left[w\left(X_{k+1}\right)\mid X_k\right]=\sum\limits_{\substack{k\in \mathbb{N}\cup\{0\}\\ \sigma_k\in [m]}}\mu\left(\sigma_k\right)\left(w\circ w_{\sigma_k}\right)\left(x\right)
\end{align}
\end{definition}
To find the dual of the operator \eqref{eq:ch2-Markov-ifswsip}, suppose the elements or points in the metric space $\left(\mathcal{X},\rho\right)$ are distributed according to a measure $\nu_0$. For each point, we throw a dice independently with $m$ sides for which the side $i$ falls with probability $\mu(i)$, $i=1,2,\dots,m$. If $i$ falls for the point at position $x_0$, we move the point to $w_{i}\left(x_0\right)$, in other words, if $\sigma_0,\sigma_1,\dots$ is a sequence of i.i.d discrete random variable taking values in $[m]$, starting from some $x_0\in \mathcal{X}$, then the point $x_0$ moves to $w_{\sigma_0}\left(x_0\right)$ with probability $\mu\left(\sigma_0\right)$ as stated in \eqref{eq:ch2-ifswsip-disc-prob}. The new amount of elements in a set $\mathcal{A}$ come from the set $w_{\sigma_0}^{-1}\left(\mathcal{A}\right)$ with probability $\mu\left(\sigma_0\right)$. The elements transported to $\mathcal{A}$ by $w_{\sigma_0}$ have then mass $\mu\left(\sigma_0\right)\nu_0\left(w_{\sigma_0}^{-1}\left(\mathcal{A}\right)\right)$. Hence the new distribution of the elements in $\mathcal{X}$ is
\begin{align}\label{eq:ch2-mass-transport}
\nu_1\left(\mathcal{A}\right)&=\mu\left(\sigma_0\right)\nu_0\left(w_{\sigma_0}^{-1}\left(\mathcal{A}\right)\right)+\mu\left(\sigma_1\right)\nu_0\left(w_{\sigma_1}^{-1}\left(\mathcal{A}\right)\right)+\cdots+\mu\left(\sigma_m\right)\nu_0\left(w_{\sigma_m}^{-1}\left(\mathcal{A}\right)\right)
\end{align}
Thus one can consider for the IFS with maps $w_1, w_2,\dots, w_m$ and the probabilities $\mu\left(1\right),\ldots, \mu\left(m\right)$, an operator of the evolution of densities of elements under the action of the IFS as defined below.
\begin{definition}\label{dfn:ch2-inv-prob-meas}
For an IFS as in Definition~\ref{df:ch2-ifswsindp}, the dual operator of the Markov operator defined \eqref{eq:ch2-Markov-ifswsip} is given by
\begin{align}\label{eq:ch2-dual-Markov-ifswsip}
{\mathrm{P}}^{\star}: \mathcal{M}\left(\mathcal{X}\right)\to \mathcal{M}\left(\mathcal{X}\right), \mathrm{P}\left(\nu\right)=\sum\limits_{\substack{k\in \mathbb{N}\cup\{0\}\\ \sigma_k\in [m]}}\mu\left(\sigma_k\right)\nu\left(w_{\sigma_k}^{-1}\left(\mathcal{A}\right)\right);
\end{align}
A probability measure $\nu$ on $\mathcal X$ is called invariant under $\mathrm{P}^{\star}$, if $\mathrm{P}^{\star}\nu=\nu$.
\end{definition}
For the dynamical systems with only one transformation $w: \mathcal{X}\to \mathcal X$, the operator $\mathrm{P}^{\star}$ is known as the \emph{Perron-Frobenius operator}, see \cite[Section 3.2]{Lasota1998}. The operator defined in the Definition~\eqref{eq:ch2-dual-Markov-ifswsip} is also known as \emph{Foais operator} \cite[Section 12.4]{Lasota1998}.

\subsection{Ergodic Properties of the Associated Markov Processes}\label{subsec:ergo-prop-mp1}
\label{subsec:ch2-ergo-prop-ifswsindpwfnm}
It may be critical in practice to determine if an IFS is uniquely ergodic or not, both theoretically and practically, since this will facilitate the conclusion of simulation results for the process.
If we simulate an ergodic weak-Feller chain on a compact space, the sampled points' distributions will converge to the \emph{invariant measure}. To establish the ergodic property of the related Markov process created by an IFS, \cite{Breiman1960} established the following version of the ergodic theorem noted in \cite[Chapter 6, Theorem 6.7]{Corless2016}.
\begin{theorem}[\cite{Breiman1960}]\label{thm:ch2-breiman-ergo}
Let $\{X_k\}$ be a Markov chain on a compact metric space $\left(\mathcal{X}, \rho\right)$ with a unique-invariant distribution $\nu^{\star}$, suppose also that the chain is Feller (see Definition~\ref{dfn:ch2-Feller-chain}), then for every initial condition $X_0=x_0\in \mathcal{X}$ and any continuous function $w: \mathcal{X}\to \mathbb{R}$, the sampled points will satisfy the following limiting behavior
\begin{align}\label{eq:ergodicity-breiman}
\lim_{k\to \infty}\frac{1}{k+1}\sum\limits_{i=0}^{k} w\left(X_i\right)=\int w(x)  \nu^{\star}\left(\mathrm{d}x\right)  \text{ almost surely } \mathbb{P}_{x_0}. 
\end{align}
\end{theorem}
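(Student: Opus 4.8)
The plan is to prove this strong law of large numbers by studying the sequence of \emph{empirical occupation measures}
\[
\mu_n := \frac{1}{n+1}\sum_{i=0}^{n}\delta_{X_i}\in\mathcal{M}_{\text{p}}(\mathcal{X}),
\]
and showing that, almost surely $\mathbb{P}_{x_0}$, every weak-$*$ limit point of $\{\mu_n\}$ is invariant for the chain. Since $\nu^{\star}$ is by hypothesis the \emph{unique} invariant measure, this forces $\mu_n$ to converge weakly to $\nu^{\star}$, and because $w$ is continuous the theorem follows upon observing that $\int_{\mathcal{X}} w\,\mathrm{d}\mu_n=\frac{1}{n+1}\sum_{i=0}^{n}w(X_i)$. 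To set this up I would first record that, as $\mathcal{X}$ is compact, $\mathcal{M}_{\text{p}}(\mathcal{X})$ is weak-$*$ compact and metrizable (for instance by the Prohorov metric of Definition~\ref{dfn:ch2-proho-dist}), so along each sample path $\{\mu_n\}$ admits convergent subsequences. Since $C(\mathcal{X},\mathbb{R})$ is separable, I would fix a countable set $\{f_j\}_{j\ge1}$ dense in $C(\mathcal{X},\mathbb{R})$, reducing the analysis to controlling $\int f_j\,\mathrm{d}\mu_n$ for each $j$ on a single event of full probability.

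The engine of the argument is a martingale step. Writing $\mathrm{P}$ for the transition operator, $(\mathrm{P}f)(x)=\mathbb{E}[f(X_1)\mid X_0=x]$, I would fix a continuous $f$ and set
\[
M_n^{f}:=\sum_{i=0}^{n-1}\bigl(f(X_{i+1})-(\mathrm{P}f)(X_i)\bigr).
\]
By the Markov property each summand has conditional mean zero given the past, so $M_n^{f}$ is a martingale for the natural filtration, and its increments are bounded because $f$ and $\mathrm{P}f$ are bounded on the compact space $\mathcal{X}$. Kolmogorov's strong law for martingale differences with bounded increments then yields $M_n^{f}/n\to0$ almost surely $\mathbb{P}_{x_0}$, that is,
\[
\frac{1}{n}\sum_{i=0}^{n-1}f(X_{i+1})-\frac{1}{n}\sum_{i=0}^{n-1}(\mathrm{P}f)(X_i)\longrightarrow0\quad\text{a.s.}
\]
Applying this to each $f_j$ and intersecting the countably many full-probability events produces one event $\Omega_0$ with $\mathbb{P}_{x_0}(\Omega_0)=1$ on which the display holds simultaneously for every $f_j$.

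It remains to identify the limit points on $\Omega_0$. Along any convergent subsequence $\mu_{n_k}$ with weak-$*$ limit $\mu$, the two Cesàro averages above differ from $\int f_j\,\mathrm{d}\mu_{n_k}$ and $\int \mathrm{P}f_j\,\mathrm{d}\mu_{n_k}$ only by boundary terms of order $1/n_k$. Here the Feller hypothesis (Definition~\ref{dfn:ch2-Feller-chain}) is essential: it guarantees $\mathrm{P}f_j\in C(\mathcal{X},\mathbb{R})$, so weak-$*$ convergence lets me pass to the limit in both integrals and conclude $\int f_j\,\mathrm{d}\mu=\int \mathrm{P}f_j\,\mathrm{d}\mu$ for every $j$, hence for all continuous $f$ by density. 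This is precisely the statement that $\mu$ is an invariant measure of the chain, so uniqueness gives $\mu=\nu^{\star}$. Because every subsequential limit equals $\nu^{\star}$ and $\mathcal{M}_{\text{p}}(\mathcal{X})$ is compact and metrizable, the full sequence converges, $\mu_n\Rightarrow\nu^{\star}$, and testing against the given continuous $w$ completes the proof.

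I expect the main obstacle to be the careful bookkeeping of the third step rather than any single deep estimate. One must (i) invoke the Feller property at exactly the point where $\mathrm{P}f_j$ is integrated against the limit measure, since without it $\mathrm{P}f_j$ need not be continuous and the passage to the limit fails; (ii) ensure that density in $C(\mathcal{X},\mathbb{R})$ upgrades the invariance identity from the countable family $\{f_j\}$ to all continuous functions on the single full-probability event $\Omega_0$; and (iii) convert ``every subsequential limit is $\nu^{\star}$'' into convergence of the whole sequence, which is where metrizability of the weak-$*$ topology on the compact space $\mathcal{M}_{\text{p}}(\mathcal{X})$ is used.
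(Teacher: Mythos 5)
Your proposal is correct, and it is essentially the standard proof of this result: the paper itself gives no proof (it simply cites \cite{Breiman1960} and \cite[Chapter 6, Theorem 6.7]{Corless2016}), and the argument in those sources is exactly your Krylov--Bogolyubov-style scheme --- empirical occupation measures, the martingale strong law applied to $f(X_{i+1})-(\mathrm{P}f)(X_i)$ over a countable dense family in $C(\mathcal{X},\mathbb{R})$, the Feller property to pass $\mathrm{P}f_j$ through the weak-$*$ limit, and uniqueness of the invariant measure plus compactness and metrizability of $\mathcal{M}_{\text{p}}(\mathcal{X})$ to upgrade subsequential convergence to full convergence. The three bookkeeping points you flag (where Feller is used, the sup-norm contractivity of $\mathrm{P}$ to extend the invariance identity from $\{f_j\}$ to all of $C(\mathcal{X},\mathbb{R})$ on the single full-measure event, and the subsequence argument) are precisely the right ones and all go through.
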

Using several \emph{contractivity condition} is a critical strategy for establishing the unique ergodicity of an IFS. There are two distinct ways to characterise these contractions. One may consider contractivity in terms of individual trajectories. If a process is contractive, the trajectories of individuals beginning from distinct positions should approach each other over time. Thus, there will be one random trajectory in the long run, and the behaviour will be determined only by the original distribution. Another method for demonstrating contractivity is to use the weak convergence of several probability metrics, as described in \cite[Section 2.2]{Niclas2005(1)}. Contractivity conditions assure that the trajectories can meet regardless of their starting points see \cite{Kaijser1981, Kaijser2015}. We refer \cite[Section 1.1]{Kaijser1978} for sufficient conditions to show the uniqueness of invariant measures. We refer \cite[Table 2.1, Section 2.1]{Niclas2005(1)} for a quick overview of several conditions, for example, \emph{local and global arithmetic mean, geometric mean, and power mean} conditions. We refer  \cite[Lemma 1, Section 2.1]{Niclas2005(1)} where the relation between power mean and geometric mean conditions were established following a necessary contraction condition from \cite{Barnsley1988(2), Barnsley1988(2)erratum}. For more on contraction conditions of IFS, e.g. \emph and a discussion on \emph{non-contractive IFS}, we suggest \cite{Krzysztof2020} and the references therein.

\subsection{State-Dependent IFS with Finite Number of Self Maps}\label{subsec:sdIFS-FN-SM}
\label{sec:ch2-ifswsdpwfnm}
\label{subsec:ch2-dfn-ifswsdpwfnm}
Let\emph{$\left(\mathcal{X}, \rho\right)$ be a compact metric space}, in this section, we discuss definitions, evolution, and ergodic properties of IFS whose dynamics generated by a finite number of self-maps with state or place dependent probabilities. 
\begin{definition}[IFS with state-dependent probabilities with finite number of self-maps] \label{dfn:ch2-ifswsdpwfnm}
Let $\{w_k\}_{k=1}^{m}$ be continuous self-maps on $\mathcal X$ and $\{p_k(x)\}_{k=1}^{m}$ be probability functions such that, 
\begin{align*}
p_k(x):\mathcal X \to [0,1] \quad \forall k\in [m], \text{ and } \sum\limits_{k=1}^{m} p_k(x)=1.
\end{align*}
The pair of $(\{w_k\}_{k=1}^{m}, \{p_k(x)\}_{k=1}^{m})$ 
is called an IFS.
\end{definition}

Given an initial condition $x_0\in \mathcal{X}$, and a sequence $\{\sigma_k\}$ in $[m]$, one generates $\{x_k\}_{k\ge 0}$ through the following recurrence relation
\begin{align}\label{eq:ch2-iterate-rel}
x_{k+1}=w_{\sigma_k}\left(x_k\right), \quad k=0,1,2,\dots.
\end{align} 
The semantics of the definition is captured in Algorithm \ref{algo:ch2-ifswsdp}.

\begin{algorithm}[H]
\caption{State-dependent IFS Iteration}\label{algo:ch2-ifswsdp}
\begin{algorithmic}[1]
\Statex \textbf{Input}: Initial value \(x_0\), \(m\) functions $\{w_k\}_{k=1}^{m}$, $m$ probability functions $\{p_k\}_{k=1}^{m}$
\Statex \textbf{Output:} A sequence of random variables \(X_k\) with Markov property (see Definition~\ref{dfn:ch2-MC})
\Statex Step 1: Set $n = 0$ 
\Statex Step 2: Repeat the following:
\Statex Step 2.1 Calculate \(\boldsymbol{p}(X_n) = (p_1(X_n), p_2(X_n), \dotsc, p_m(X_n))\)
\Statex Step 2.2: Select \(w_{\sigma_{n+1}} \in \{1, \dotsc, m\}\) according to \(\boldsymbol{p}(X_n)\)
\Statex Step 2.3: Apply \(w_{\sigma_{n+1}}\) to the state:  \(X_{n+1} = w_{\sigma_{n+1}}(X_n)\)
\Statex Step 3: Return  {\(\{X_k\}_{k\in\mathbb{N}}\)}
\end{algorithmic} 
\end{algorithm}

To see the evolution of the IFS the kernel (see Definition~\ref{dfn:ch2-stoch-ker}) of the corresponding Markov chain should be obtained. Given a state $x\in \mathcal X$ at time $k$, the set of possible states at the next time $k+1$ is the finite set
\begin{align*}
\{w_{\sigma}\left(x\right): \sigma\in [m]\}
\end{align*}
and the occurrence of $w_{\sigma}$ has the probability $p_{\sigma}\left(x\right)$. Hence, for any event $\mathcal A$,
\begin{align*}
\mathbb{P}\left( X_{k+1}\in \mathcal{A}\mid X_k=x\right)=\sum\limits_{\sigma: w_{\sigma}\left(x\right)\in \mathcal{A}} p_{\sigma}\left(x\right).
\end{align*} 
The kernel for this Markov process is given by 
\begin{align}\label{eq:ch2-exprssn-kernel}
\nu\left(x, \mathcal{A}\right)=\sum\limits_{\sigma: w_{\sigma}\left(x\right)\in \mathcal{A}} p_{\sigma}\left(x\right)=\sum\limits_{\sigma: x\in \left(w_{\sigma}\right)^{-1}\left(\mathcal{A}\right)} p_{\sigma}\left(x\right)=\sum\limits_{\sigma\in [m]} p_{\sigma}\left(x\right)\mathrm{1}_{\left(w_{\sigma}\right)^{-1}\left(\mathcal{A}\right)}\left(x\right).
\end{align}
This Markov chain's kernel is hence a Dirac-measure's sum (see Definition~\ref{dfn:ch2-dirac-msr})
\begin{align}\label{eq:ch2-kernel-as-sum-Dirac-msr}
\nu\left(x, \cdot\right)=\sum\limits_{\sigma\in [m]} p_{\sigma}\left(x\right) \delta_{w_{\sigma}\left(x\right)}.
\end{align}
To describe evolution of the dynamics \eqref{eq:ch2-iterate-rel}, let us define the \emph{Markov operator}  $\mathrm{P}$ (see Definition~\ref{dfn:ch2-markov-op}), note that, for any function $g:\mathcal{X}\to \mathbb{R}$,
\begin{align}\label{eq:ch2-expect-func-of-mc}
\mathbb{E}\left[g\left(X_{k+1}\right)\mid X_k=x\right]= \sum\limits_{\sigma\in [m]} p_{\sigma}\left(x\right) g\left(w_{\sigma}\left(x\right)\right).
\end{align}
Hence,
\begin{align}\label{eq:ch2-markov-operator}
\left(\mathrm{P}g\right)(x)=\sum\limits_{\sigma\in [m]} p_{\sigma}\left(x\right) \left(g\circ w_{\sigma}\right)\left(x\right).
\end{align}
The Markov operator's Feller property is crucial, which says $\left(\mathrm{P}g\right)\in C_{b}\left(\mathcal{X}\right)$ whenever $g\in C_{b}\left(\mathcal{X}\right)$. It is clear that the Markov operator defined in \eqref{eq:ch2-markov-operator} is indeed has \emph{Feller property} see \cite[Theorem 4.22]{Hairer2006}.
Since $\mathcal{X}$ is compact, all functions in $C\left(\mathcal{X}, \mathbb{R}\right)$ are bounded, and  $\mathrm{P}: C\left(\mathcal{X}, \mathbb{R}\right)\mapsto C\left(\mathcal{X}, \mathbb{R}\right)$, by Riesz representation theorem (originally due to \cite{Riesz1909} or see \cite[Chapter 6, Section 4]{Royden1988} for a proof), we can conclude that the dual space of $C\left(\mathcal{X}, \mathbb{R}\right)$ is is the space of all signed or complex measures on $\mathcal{X}$ with the total-variation norm. The dual map $\mathrm{P}^{\star}$ is given by
\begin{align}
\mathrm{P}^{\star}\nu\left(\mathcal A\right)=\sum\limits_{i=1}^{m} \left(p_i\circ w_i^{-1}\left(\mathcal{A}\right)\right)\mathrm{d}\left(\nu\circ w_i^{-1}\left(\mathcal{A}\right)\right)\quad \text{ for } \mathcal{A}\in \mathcal{B}\left(\mathcal X\right).
\end{align} 
where we define
\begin{align}
\left(\nu\circ w_i^{-1}\right)\left(\mathcal{A}\right)= \nu\left(w_i^{-1}\left(\mathcal{A}\right)\right)\quad \text{ for all events } \mathcal{A}\in \mathcal{B}\left(\mathcal X\right).
\end{align}
and it is not required that $w_i$ are invertible maps, but we interpret
\begin{align}
w_i^{-1}\left(\mathcal{A}\right)=\{ x\in \mathcal{X}: w_i(x)\in \mathcal{A}\}\text{ as the pre-image of } \mathcal{A}\text{ under the map } w_i.
\end{align}
The theory of Markov operators began in 1906 when Markov demonstrated that stochastic matrices might be used to study the asymptotic features of certain stochastic processes \cite{Markov1906}. Positive linear operators on $\mathbb R^n$ are defined by these matrices. Markov's concepts have been expanded in several areas. Feller developed, in particular, the theory of Markov operators operating on Borel measures defined on certain topological spaces. In \cite{Nummelin1984, Revuz1975, Cinlar1975, Ethier1986, Foguel1966(1),  Foguel1966(2), Foguel1966(3), Foguel1969} one finds some historical notes and a plethora of literature.
\begin{definition}[Contraction on average in $1$ step]\cite[Section 2.2]{Anthony2015}\label{dfn:ch2-contra-on-av-1-step}
We say that an IFS contracts on average on the metric space $(\mathcal X, \rho)$ after $1$ step if there exists an $0<r<1$ such that 
\begin{align}\label{eq:ch2-contra-on-av-1-step}
\sup_{\substack{x,y,z\in \mathcal X\\ y\ne z}} \sum_{i} p_{i}(x) \frac{\rho\left(w_{i}(y), w_{i}(z)\right)}{\rho(y, z)} \leq r.
\end{align}
\end{definition}
This can be generalised to define IFS that contract on average after, say, $k$ step where $k\in \mathbb N$. Let
\begin{align*}
\mathcal I:=\{\sigma=(\sigma_k)_{k=1}^{\infty}: \sigma_k\in [m]\}
\end{align*}
represents the collection of all conceivable sequences of maps. Let $\circ$ denote the usual composition operation of functions, for $x_0\in \mathcal X$, $\sigma\in \mathcal I$, $k\in \mathbb N$, define 
\begin{align}\label{eq:ch2-possible-sequ-maps}
F_k(x, \sigma):=\left\{\begin{array}{ll}
x_0, & \text { if } k=0 \\
\left(w_{\sigma_{k}}\circ\ldots\circ w_{\sigma_{1}}\right)\left(x_0\right), & \text { if } k \geq 1.
\end{array}\right.
\end{align}
For $x_0\in \mathcal X$, we define $\mu_{x_0}$ on $\mathcal I$ for some $\sigma=(\sigma_1,\dots, \sigma_k)$ as follows:
\begin{align}\label{eq:ch2-mu-on-sequ-maps}
\mu_{x_0}\left(\sigma\right)=p_{\sigma_1}\left(x\right)\times p_{\sigma_2}\left(w_{\sigma_1}\left(x\right)\right)\times\ldots\times p_{\sigma_k}\left(\left(w_{\sigma_{k-1}}\circ w_{\sigma_{k-2}}\circ\ldots\circ w_{\sigma_1}\right)\left(x_0\right)\right),
\end{align}
to quantify the probability that we apply the sequence of maps $w_{\sigma_1},\dots,w_{\sigma_k}$ given that we have started from the point $x_0$. It is easy to notice that $\left(F_k\left(x_0,\cdot\right)\right)_{k\ge 1}$ is a Markov chain with transitional probability $\mu_{x_0}$. 
Let $\mathcal M_k: [m]^k\to \mathbb R$ be a function on $\mathcal I$ depending only on the first $k$ coordinates. Then expectation with respect to $\mu_{x_0}$ is defined as \citep[Section 2.2]{Anthony2015}
\begin{align}\label{eq:ch2-expec}
\mathbb E_{x_0}\left[\mathcal M_k\right]:=\int \mathcal M_k\left(\sigma\right)\mathrm{d}\mu_{x_0}\left(\sigma\right)=\sum\limits_{\sigma_1,\dots,\sigma_k\in [m]}\mu_{x_0}\left(\sigma\right) \mathcal M_k\left(\sigma\right).
\end{align}
\begin{definition}[Contraction on average in $k$ steps]\cite[Section 2.2]{Anthony2015}\label{dfn:ch2-contra-on-av-k-step}
We say an IFS is \emph{contract on average after $k$ steps} if there exists $r\in (0,1)$ such that 
\begin{align}\label{eq:ch2-contra-on-av-k-step}
\sup _{\myatop{x, y, z \in \mathcal X}{y\ne z}} \sum\limits_{\sigma_{1}, \ldots, \sigma_{k}} \mu_{x_0}\left(\sigma\right) \frac{\rho\left(F_k\left(y, \sigma\right), F_{k}\left(z, \sigma\right)\right)}{\rho\left(y, z\right)} \leq r.
\end{align}
If for a $\sigma=\left(\sigma_1,\dots, \sigma_k\right)$, we denote $w_k\left(\sigma\right)=\left(w_{\sigma_{k}}\circ\ldots\circ w_{\sigma_{1}}\right)\left(x\right)$ and assume that each map is Lipschitz and define the Lipschitz norm $\|\cdot\|_{\text {Lip }}$ as follows:
\begin{align}\label{eq:ch2-lipschitz-norm}
\left\|w\right\|_{\operatorname{Lip}}=\sup _{\myatop{x, y \in X} {x \neq y}} \frac{\rho\left(w\left(x\right), w\left(y\right)\right)}{\rho\left(x, y\right)},
\end{align}
then we can re-write \eqref{eq:ch2-contra-on-av-k-step} in a compact form as follows \cite[Section 2.2]{Anthony2015}:
\begin{align}\label{eq:ch2-contra-on-av-k-step-comp-form}
\sup _{x \in \mathcal X} \mathbb{E}_{x}\left[\left\|w_k(\cdot)\right\|_{\mathrm{Lip}}\right] \leq r.
\end{align}
\end{definition}
In \cite{Hermer2019}, several other contraction conditions were introduced to achieve convergence of random iteration of functions. To introduce them, we first need to define the following:
\begin{align}
\text{Fix } w_i=\{x\in \mathcal{X}: w_i(x)=x\}
\end{align}
\begin{definition}
(Quasi-nonexpansive mappings.) $w_i$ is called a quasi-nonexpansive mapping  if 
\begin{align}
\text{for all }x \notin \text{Fix } w_i \text{and, } \text{for all } y\in \text{Fix } w_i \Rightarrow \rho (w_i(x), y)\le \rho(x,y).
\end{align}
\end{definition}
\begin{definition}
(Paracontraction.) $w_i$ is called paracontraction if it is continuous and 
\begin{align}
\text{for all }x \notin \text{Fix } w_i \text{ and, } \text{ for all } y\in \text{ Fix } w_i \Rightarrow \rho (w_i(x), y)< \rho(x,y).
\end{align}
\end{definition}
\begin{definition}
(Nonexpansive mappings.) $w_i$ is called nonexpansive if
\begin{align}
\text{for all }x, y\in \mathcal{X} \Rightarrow \rho (w_i(x), w_i(y))< \rho(x,y).
\end{align}
\end{definition}
For the following definition, the space $\mathcal{X}$ needs to be a normed-linear space. 
\begin{definition}
(Averaged mappings on a normed linear space.) A mapping $w_i: \mathcal{X}\to \mathcal{X}$ is called averaged mapping if there exists an $\alpha\in (0,1)$ such that 
\begin{align}
\text{for all }x, y\in \mathcal{X} \Rightarrow \left\| w_i(x)- w_i(y)\right\|^2 +\frac{1-\alpha}{\alpha}\left\| (x-w_i(x))- (y-w_i(y))\right\|^2\le \left\|x-y\|^2.\right.
\end{align}
\end{definition}

\subsection{Ergodic Properties of the Associated Markov Processes}\label{subsec:ergo-prop-mp2}
\label{subsec:ch2-ergo-prop-ifswsdpwfnm}
To ensure that the Markov process given by the IFS stated in the Definition~\ref{dfn:ch2-ifswsdpwfnm} possesses a unique invariant  measure, we will need to place some mild restrictions on the probability functions $\{p_i(x):\mathcal X\to [0,1]\}$. Throughout the article, we will assume that each map $w_i$ of the IFS is Lipschitz continuous and that the probabilities $p_i\left(x\right)$ satisfies the Dini continuity as stated in the Definition~\ref{dfn:ch2-dini-cont}. Research has focused heavily on IFS models with state-dependent probability (e.g., \cite{Stenflo2012(s), Stenflo2002} and references there).
For an IFS with state-dependent probabilities, the convergence of a chaotic game to an invariant measure was shown in \cite{Barnsley1988(2), Barnsley1988(2)erratum}. See also, \cite{Gwozdz2005, Andres2005} for extension of the \emph{Barnsley-Hutchinson} type results to an infinite number of self-maps. Let us state a significant result on ergodicity of IFS on compact metric space due to \cite{Barnsley1988(2)}: 
\begin{theorem}[\cite{Barnsley1988(2)}]
Suppose $(\mathcal X, \rho)$ be a compact metric space and that $\{w_i: \mathcal X\to \mathcal X\}_{i=1}^{N}$ be an IFS that contracts on average with corresponding Dini continuous probabilities $\{p_i: \mathcal X\to [0,1]\}_{i=1}^{m}$, then Then there exists a unique invariant Borel probability measure $\nu^{\star}$ for the IFS i.e. for all event $\mathcal A\in \mathcal B\left(\mathcal X\right)$ we have
\begin{align}
\int \nu\left(x, \mathcal A\right) \nu^{\star}\left(\mathrm{d}x\right)=\nu^{\star}\left(\mathcal A\right).
\end{align}
\end{theorem}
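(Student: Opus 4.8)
The plan is to treat existence and uniqueness separately, since they draw on different hypotheses. For \textbf{existence} I would run the Krylov--Bogolyubov argument, which needs only the Feller property and compactness, not contractivity. The Markov operator $\mathrm{P}$ of \eqref{eq:ch2-markov-operator} maps $C(\mathcal X,\mathbb R)$ into itself (its Feller property was recorded just after that display), so its dual $\mathrm{P}^{\star}$ maps $\mathcal{M}_{\text{p}}(\mathcal X)$ into itself and is continuous for the weak-$*$ topology. Because $\mathcal X$ is compact, $\mathcal{M}_{\text{p}}(\mathcal X)$ is weak-$*$ compact, so for any $x_0$ the Cesàro averages $\frac{1}{n}\sum_{k=0}^{n-1}(\mathrm{P}^{\star})^{k}\delta_{x_0}$ admit a weak-$*$ limit point $\nu^{\star}$, and the usual telescoping estimate $\int \mathrm{P}f\,\mathrm d\nu_n-\int f\,\mathrm d\nu_n\to 0$ forces $\mathrm{P}^{\star}\nu^{\star}=\nu^{\star}$. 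This produces at least one invariant probability measure.

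For \textbf{uniqueness} the strategy is to show that for every Lipschitz $f$ the iterates $\mathrm{P}^{n}f$ converge uniformly to a constant $c_f$. Granting this, any invariant $\nu$ satisfies $\int f\,\mathrm d\nu=\int \mathrm{P}^{n}f\,\mathrm d\nu\to c_f$, so $\int f\,\mathrm d\nu=c_f$ is forced to be the same number for every invariant measure; since Lipschitz functions are dense in $C(\mathcal X,\mathbb R)$ and separate Borel probability measures, this pins down a single invariant measure. The whole problem thus reduces to the uniform convergence $\mathrm{P}^{n}f\to c_f$.

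The heart of the argument is estimating the oscillation of $\mathrm{P}f$, and here I would decompose, for $y\ne z$ and $f\in\mathcal W_1$,
\begin{align*}
(\mathrm{P}f)(y)-(\mathrm{P}f)(z)=\sum_i p_i(y)\big[f(w_i(y))-f(w_i(z))\big]+\sum_i\big[p_i(y)-p_i(z)\big]f(w_i(z)).
\end{align*}
The first sum is bounded by $\sum_i p_i(y)\,\rho(w_i(y),w_i(z))\le r\,\rho(y,z)$ by the contraction-on-average condition \eqref{eq:ch2-contra-on-av-1-step}. The second, ``probability-mismatch'' sum is the main obstacle: it does not contract, and using only continuity of $f$ it is controlled by $\sum_i|p_i(y)-p_i(z)|$, that is, by the modulus of continuity $\phi$ of the probabilities rather than by $\rho(y,z)$. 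Consequently a single application of $\mathrm{P}$ fails to contract the Lipschitz seminorm, and a naive Banach/Wasserstein contraction for $\mathrm{P}^{\star}$ cannot be closed --- this is precisely the technical crux of the theorem.

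The resolution, following \cite{Barnsley1988(2)}, is to iterate and sum rather than to contract in one step. Over $n$ steps the distance between the two trajectories contracts geometrically like $r^{n}$ (times the finite diameter $\operatorname{diam}\mathcal X<\infty$ supplied by compactness), while the accumulated mismatch contributions at level $n$ are governed by $\phi(C r^{n})$. The Dini condition $\int_0^{\delta}\phi(t)/t\,\mathrm dt<\infty$ is exactly what makes $\sum_n \phi(C r^{n})<\infty$ --- comparing the series with the integral via the substitution $t=Cr^{n}$ --- so the telescoped errors are summable and $(\mathrm{P}^{n}f)$ is uniformly Cauchy, its limit being constant because its oscillation tends to $0$. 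Assembling these bounds and verifying this summability step, where Dini continuity of the $p_i$ is indispensable and mere continuity would fail, is the delicate part; by contrast the existence half and the reduction of uniqueness to $\mathrm{P}^{n}f\to c_f$ are comparatively routine.
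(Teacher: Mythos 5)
The paper itself offers no proof of this statement: it is quoted as a cited result of Barnsley, Demko, Elton and Geronimo, so there is no in-paper argument to compare yours against. Judged on its own terms, your outline is the correct one and is in fact the strategy of the original reference: Krylov--Bogolyubov for existence (Feller property of \eqref{eq:ch2-markov-operator} plus weak-$*$ compactness of $\mathcal{M}_{\text{p}}(\mathcal X)$ on a compact space), and uniqueness reduced to the uniform convergence of $\mathrm{P}^{n}f$ to a constant for Lipschitz $f$, with the oscillation of $\mathrm{P}f$ split into a contracting part controlled by \eqref{eq:ch2-contra-on-av-1-step} and a probability-mismatch part controlled by the modulus of continuity of the $p_i$. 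You have also correctly identified where Dini continuity enters, namely the summability of $\sum_n\phi(Cr^n)$, which is equivalent to $\int_0^\delta \phi(t)/t\,\mathrm{d}t<\infty$ for monotone $\phi$.

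The one genuine gap is that the step you describe as ``iterate and sum'' is asserted rather than executed, and it conceals the only real difficulty of the theorem. When you expand $(\mathrm{P}^{n}f)(y)-(\mathrm{P}^{n}f)(z)$ over all words $(\sigma_1,\dots,\sigma_n)$, the mismatch at level $k$ involves $\bigl|p_{\sigma_k}(u_{k-1})-p_{\sigma_k}(v_{k-1})\bigr|$ where $u_{k-1},v_{k-1}$ are the two partial orbits; these are \emph{random} points whose distance is only controlled \emph{on average} by $r^{k-1}\rho(y,z)$. Since $\phi$ is not linear, you cannot pass from $\mathbb{E}[\rho(u_{k-1},v_{k-1})]\le r^{k-1}\rho(y,z)$ to $\mathbb{E}[\phi(\rho(u_{k-1},v_{k-1}))]\le\phi(r^{k-1}\rho(y,z))$ without first replacing $\phi$ by a concave (hence still Dini) majorant and invoking Jensen's inequality; one must also check that each level-$k$ mismatch term, multiplied through the remaining $n-k$ steps of the expansion, stays bounded (here positivity of $\mathrm{P}$ and $\mathrm{P}1=1$ give $\|\mathrm{P}^{j}f\|_\infty\le\|f\|_\infty$, and it is cleaner to exploit $\sum_i\bigl(p_i(y)-p_i(z)\bigr)=0$ to recenter $f$ and bound the mismatch by $\operatorname{diam}(\mathcal X)$ rather than $\|f\|_\infty$). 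None of this breaks your plan --- it is exactly how the cited proof closes the estimate --- but as written your proposal stops one step short of a proof at precisely the point the theorem is nontrivial.
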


\subsection{State-Dependent IFS with Infinite Number of Maps}\label{subsec:SDIFS-Inf-SM}
\begin{definition}[\cite{Anthony2015}]\label{dfn:ch2-ifswsipwinm}
Let $\{w_i\}_{i\in \mathbb N}$ be a collection of Borel measurable transformations on $ \mathcal X$ along with associated continuous probability functions $\{p_i(x)\}_{i\in \mathbb N}$, $p_i:\mathcal X\to [0,1]$, where for all $x\in \mathcal X, \sum\limits_{i\in \mathbb N}p_i(x)=1$. Then similar to the Definition~\ref{dfn:ch2-ifswsdpwfnm} if the iteration process starts from a point $x_0$, a map $w_{\sigma_1}(x)$ is chosen with probability $p_{\sigma_1}(x_0)$ and if the process is repeated up to $k>1$ times, then a random orbit $(w_{\sigma_k}\circ w_{\sigma_{k-1}}\circ\cdots\circ w_{\sigma_1})(x_0)$ is obtained
\begin{align*}
\text{ with probability } p_{\sigma_k}(w_{\sigma_{k-1}}\circ w_{\sigma_{k-2}}\circ\cdots\circ w_{\sigma_1}x_0)\times \cdots\times p_{\sigma_2}(w_{\sigma_1}(x_0))\times  p_{\sigma_1}(x_0).     
\end{align*}
And for a given starting point $x_0\in \mathcal{X}$ and $\mathcal{A}\in \mathcal{B}\left(\mathcal{X}\right)$ the transition probability from $x_0$ to $\mathcal{A}$ is given by
\begin{align}\label{eq:ch2-transi-inf-sdifs}
\nu\left(x_0, \mathcal{A}\right)=\sum_{k\in \mathbb N} p_{\sigma_k}(x) \boldsymbol{1}_{\mathcal{A}}\left(w_{\sigma_k}(x_0)\right)
\end{align}
\end{definition}
Under some regularity assumptions on the transformations and the associated probability functions, \cite{Peigne1993} has shown the existence of a unique, invariant measure of the Markov chain arising from the IFS defined in the Definition~\ref{dfn:ch2-ifswsipwinm}.

\section{IFS on Polish Spaces}\label{sec:IFS-on-polish}

\subsection{State-Independent IFS with Infinite Number of Self Maps}
According to \cite{Dumitru2013}, IFS with an infinite number of self-maps was first introduced due to \cite{Wicks2006} in $1991$. See  \cite{Lesniak2004,Gwozdz2005, Dumitru2009, Mihail2009,Mihail2010(1),Mihail2010(2), Secelean2013, Mauldin1995, Hanus2000,Hyong2005, Mendivil1998(2)} for their theoretical development. Some applications of IFS with an infinite number of self-maps can be found in \cite{Diaconis1999} and the references there. Here, we try to summarise some of their basic properties:
\begin{definition}[\cite{Diaconis1999}](State independent IFS with infinite number of self maps.)\label{dfn:ch2-ifs-diaconis}
Let  $\mu$ be a \emph{probability distribution} on a countably infinite index set $\Theta$ and, $\{w_{\sigma}: \sigma \in\Theta \}$ be a family of self-maps on the metric space $\mathcal{X}$. One way to characterize a state-independent IFS that has a countably infinite number of self-maps is as follows: If the starting location is $X_0=x_0\in \mathcal{X}$, then the movement occurs by selecting a $\sigma$ at random from $\mu$, and then continuing to  $w_{\sigma}\left(x\right)$. $\mu$ is independent of any $x\in \mathcal{X}$. In the same way, as was covered in section \eqref{eq:ch2-ifswsindp}, this process will be described inductively by the realization equation.
\begin{align}\label{eq:ch2-ifs-diac-realiz}
X_{k+1}=w_{\sigma_{k}}\left(X_k\right)
\end{align}
where $\sigma_1,\sigma_2,\dots$ are drawn  independently from the $\mu$ distribution.
\end{definition}
Based on the above definition, the \emph{forward and the backward iteration} (see, \cite{Chamayou1991, Letac1986, Diaconis1999}) of the mappings are given by the compositions of iteration of the mappings as follows respectively: 
\begin{align}
X_{k}(\cdot)&:=\left(w_{\sigma_{k}} \circ w_{\sigma_{k-1}} \circ \cdots \circ w_{\sigma_{1}}\right)(\cdot), \quad X_{0}(\cdot)=X_0,\label{eq:ch2-for-diac}\\
Y_{k}(\cdot)&:=\left(w_{\sigma_{1}} \circ w_{\sigma_{2}} \circ \cdots \circ w_{\sigma_{k}}\right)(\cdot), \quad Y_{0}(\cdot)=Y_0.\label{eq:ch2-back-diac}
\end{align}
Both \eqref{eq:ch2-for-diac} and \eqref{eq:ch2-back-diac} have the same distribution for every $k$, but their characteristics are different. For any fixed $x$  \eqref{eq:ch2-for-diac} is ergodic with some regularity assumptions on the maps, while \eqref{eq:ch2-back-diac} converges almost surely and may not even have a Markov property \cite[Theorem 5.1]{Diaconis1999}. The following result is due to \cite{Letac1986}, which gives a simple way to prove that \eqref{eq:ch2-for-diac} is ergodic and obtains its \emph{stationary distribution}. Let $\mathcal D(\mathcal Y)$ denote the  distribution of a random variable $\mathcal Y$ defined below.
\begin{theorem}[\cite{Letac1986}]
If $\lim\limits_{k\to \infty} Y_k(x)=\mathcal Y< \infty$ almost surely and independent of $x$, then the Markov chain $\{X_n(x)\}_{n\ge 0}$ is ergodic with unique stationary distribution $\mathcal L(\mathcal Y)$.
\end{theorem}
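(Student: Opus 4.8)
The plan is to exploit the distributional identity between the forward and backward iterations, which is the heart of Letac's principle. First I would observe that since $\sigma_1,\sigma_2,\dots$ are i.i.d., the finite block $(\sigma_1,\dots,\sigma_k)$ has the same law as its reversal $(\sigma_k,\dots,\sigma_1)$. Substituting the reversed indices into the definition of $X_k$ in \eqref{eq:ch2-for-diac} turns it into precisely the composition defining $Y_k$ in \eqref{eq:ch2-back-diac}, so for every fixed $x$ and every $k$ one obtains the equality in law $X_k(x)\stackrel{d}{=}Y_k(x)$. Because the hypothesis gives $Y_k(x)\to\mathcal Y$ almost surely, hence in distribution, I would conclude that $X_k(x)\to\mathcal Y$ in distribution as well; that is, the $k$-step law $\nu^k(x,\cdot)$ converges weakly to $\pi:=\mathcal L(\mathcal Y)$ from every starting point $x$. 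This is the crucial transfer: the backward chain converges in a strong (almost sure) mode but is not Markov, while the forward chain is Markov and inherits only the distributional limit.

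Next I would verify that $\pi$ is invariant by using the one-step recursion for the backward chain. Writing $Y_{k+1}(x)=w_{\sigma_1}\bigl(\widetilde Y_k(x)\bigr)$, where $\widetilde Y_k(x)=(w_{\sigma_2}\circ\cdots\circ w_{\sigma_{k+1}})(x)$ is built from the shifted i.i.d. sequence $(\sigma_2,\sigma_3,\dots)$, the hypothesis applied to this shifted sequence yields $\widetilde Y_k(x)\to\widetilde{\mathcal Y}$ almost surely with $\widetilde{\mathcal Y}\stackrel{d}{=}\mathcal Y$, and, crucially, $\widetilde{\mathcal Y}$ is independent of $\sigma_1$. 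Passing to the limit and using continuity of $w_{\sigma_1}$ gives $\mathcal Y=w_{\sigma_1}(\widetilde{\mathcal Y})$. Reading the right-hand side as a single transition step started from $\widetilde{\mathcal Y}\sim\pi$ with an independent map $w_{\sigma_1}$ says exactly that $\mathrm P^{\star}\pi=\pi$, so $\pi=\mathcal L(\mathcal Y)$ is a stationary distribution.

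Finally I would establish uniqueness together with the ergodic conclusion. If $\pi'$ is any stationary distribution, then for every bounded continuous $f$ stationarity gives $\int f\,\mathrm d\pi'=\int (\mathrm P^{k}f)\,\mathrm d\pi'$, where $(\mathrm P^{k}f)(x)=\mathbb E[f(X_k(x))]\to\int f\,\mathrm d\pi$ by the weak convergence from the first step. Since $\lvert \mathrm P^{k}f\rvert\le\|f\|_\infty$, bounded convergence lets me pass the limit inside the outer integral to obtain $\int f\,\mathrm d\pi'=\int f\,\mathrm d\pi$ for all such $f$, whence $\pi'=\pi$. Combining invariance, uniqueness, and the weak convergence $\nu^k(x,\cdot)\to\pi$ from arbitrary $x$ delivers ergodicity with the unique stationary law $\mathcal L(\mathcal Y)$.

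The step I expect to be the main obstacle is the passage to the limit in $Y_{k+1}(x)=w_{\sigma_1}(\widetilde Y_k(x))$: it relies on interchanging the almost-sure limit with the map $w_{\sigma_1}$, which requires the continuity of the self-maps, and on the independence of the tail limit $\widetilde{\mathcal Y}$ from the first index $\sigma_1$. Care is also needed in using the hypothesis that the limit is \emph{independent of $x$}, so that $\mathcal Y$ and $\widetilde{\mathcal Y}$ genuinely share the same law and the identified limit distribution $\pi$ does not secretly depend on the starting point.
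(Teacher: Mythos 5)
The paper itself contains no proof of this theorem: it is quoted from \cite{Letac1986} as a known result, so there is no internal argument to compare against. Judged on its own merits, your proposal is the standard proof of Letac's principle and is essentially complete and correct: the reversal identity $X_k(x)\stackrel{d}{=}Y_k(x)$ for i.i.d.\ indices turns the almost-sure backward limit of \eqref{eq:ch2-back-diac} into weak convergence $\nu^k(x,\cdot)\to\pi=\mathcal L(\mathcal Y)$ of the forward chain \eqref{eq:ch2-for-diac} with a limit not depending on $x$; the identity $\mathcal Y=w_{\sigma_1}(\widetilde{\mathcal Y})$ with $\widetilde{\mathcal Y}$ independent of $\sigma_1$ and $\widetilde{\mathcal Y}\stackrel{d}{=}\mathcal Y$ gives invariance of $\pi$; and the bounded-convergence argument $\int f\,\mathrm d\pi'=\int \mathrm P^k f\,\mathrm d\pi'\to\int f\,\mathrm d\pi$ gives uniqueness. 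The caveat you flag is the genuine one: passing to the limit in $Y_{k+1}(x)=w_{\sigma_1}\bigl(\widetilde Y_k(x)\bigr)$ requires each $w_\sigma$ to be continuous, which Definition~\ref{dfn:ch2-ifs-diaconis} does not state explicitly (the surrounding discussion assumes Lipschitz maps, so in context this is harmless, but it should be declared as a standing hypothesis). A small remark: once weak convergence $\nu^k(x,\cdot)\to\pi$ for every $x$ is in hand, invariance can alternatively be obtained by testing against $\mathrm P f$ for bounded continuous $f$ and invoking the Feller property; this uses the same continuity assumption, so nothing is gained, but it avoids manipulating the almost-sure representation of the limit. Finally, the hypothesis ``$\mathcal Y<\infty$'' should be read as ``the limit exists as an element of $\mathcal X$''; you use it implicitly when asserting that $\pi$ is a genuine probability measure.
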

The main theorem in \cite{Diaconis1999} and the immediate consequences were already established in several articles; for example, see \cite{Crauel1992, Dubins1966, Barnsley1988(2), Barnsley1988(2)erratum, Duflo2013, Elton1990, Hutchinson1981}.

The map $(\sigma, x)\mapsto f_{\sigma}(x)$ is measurable with respect to product sigma-field on $\mathcal X\times \Phi$. This representation is always viable because $\mathcal X$ is Polish, cf.  \cite{Kifer2012, Borovkov1992}. In \cite{Diaconis1999} it is assumed that if $w_{\sigma}$ are all \emph{Lipschitz}, so that there exists a $l_{\sigma}$ satisfying 
\begin{align}\label{eq:ch2-lip-ccontra-cond}
\rho(w_{\sigma}\left(x\right), w_{\sigma}\left(y\right))\le l_{\sigma}\rho\left(x,y\right)\; \forall x,y\in \mathcal X    
\end{align}  
and these are contracting on average in the sense that 
\begin{align}\label{eq:ch2-exp-log-lip-const-contra-on-avg}
\mathbb E_{\mu}\left[\ln\left(l_{\sigma}\right)\right]<0,    
\end{align}
and further if there exists some $x_0\in \mathcal X$ and a $\beta>0$ such that 
\begin{align}\label{eq:ch2-beta-cond}
\mathbb E_{\mu}\left[l_{\sigma}^{\beta}\right]< +\infty,    
\end{align}
and there exists $\alpha>0$
\begin{align}\label{eq:ch2-exp-lip-distant-condi}
\mathbb E_{\mu}\left[\rho^{\alpha}\left(w_{\sigma}\left(x_0\right), x_0\right)\right]< +\infty
\end{align}
then the Markov chain arising from the random iteration of functions whose realization equation is given by \eqref{eq:ch2-ifs-diac-realiz} has a unique invariant probability distribution $\nu^{\star}$. Moreover under any initial condition $X_0=x$, the distribution of $\nu_k\to \nu^{\star}$ in distribution in Prohorov metric where $X_k\sim \nu_k$. We now state the main result in \cite{Diaconis1999}:
\begin{theorem}[\cite{Diaconis1999}]
Let $(\mathcal X, \rho)$ be a complete, separable metric space. Let $\{w_{\sigma}: \sigma\in \Theta\}$ be a family of Lipschitz function on $\mathcal X\to \mathcal X$ and $\mu$ be a probability distribution on $\Theta$ and let $\nu_k(x, dy)$ denote the transition probability kernel of the chain $X_k$. Suppose further that conditions stated in \eqref{eq:ch2-exp-log-lip-const-contra-on-avg}, \eqref{eq:ch2-beta-cond} and \eqref{eq:ch2-exp-lip-distant-condi} holds, then 
\begin{itemize}
\item [(a)] the induced Markov chain has a unique invariant probability distribution $\pi$,
\item[(b)] there is a positive, finite constant $a_x$ and $r\in (0,1)$ such that
\begin{align*}
d_{\text{pr}}(\nu_k(x, \cdot), \pi)\le a_x r^k \quad \forall k=1,2,\dots, \text{ and } \forall x\in \mathcal X,   
\end{align*}
\item[(c)] the constant $r$ does not depend on $x$ or $k$; the constant $a_x$ does not depend on $k$ and 
\begin{align*}
a_x < a+b \rho(x, x_0) \text{ for some } 0<a,b < +\infty.
\end{align*}
\end{itemize}
\end{theorem}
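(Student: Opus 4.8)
The plan is to exploit the \emph{backward iteration} trick, comparing the forward chain $X_k(x)=(w_{\sigma_k}\circ\cdots\circ w_{\sigma_1})(x)$ with the backward composition $Y_k(x)=(w_{\sigma_1}\circ\cdots\circ w_{\sigma_k})(x)$ of \eqref{eq:ch2-for-diac}--\eqref{eq:ch2-back-diac}. For each fixed $k$ these have the same law (one merely reindexes the i.i.d.\ sequence $\sigma_1,\dots,\sigma_k$), so $Y_k(x)\sim\nu_k(x,\cdot)$; joint measurability of $(\sigma,x)\mapsto w_\sigma(x)$, available since $\mathcal X$ is Polish, makes this rigorous. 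The advantage is that the backward iterates are nested contractions: writing $L_k:=l_{\sigma_1}\cdots l_{\sigma_k}$ and applying the Lipschitz bound \eqref{eq:ch2-lip-ccontra-cond} to the common outer block $w_{\sigma_1}\circ\cdots\circ w_{\sigma_k}$,
\[
\rho\bigl(Y_{k+1}(x),Y_k(x)\bigr)\le L_k\,\rho\bigl(w_{\sigma_{k+1}}(x),x\bigr),\qquad \rho\bigl(Y_k(x),Y_k(x_0)\bigr)\le L_k\,\rho(x,x_0).
\]
First I would prove $Y_k(x)$ is almost surely Cauchy with a limit $Y_\infty$ independent of $x$, and then read off $\pi=\mathcal L(Y_\infty)$ as the invariant law.

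The crux is producing a single exponent $s>0$ that makes the increments summable. Consider $\varphi(s):=\mathbb E_\mu[l_\sigma^s]$. By \eqref{eq:ch2-beta-cond} this is finite on $[0,\beta]$ with $\varphi(0)=1$, and differentiating under the expectation (dominated by the same bound) gives $\varphi'(0)=\mathbb E_\mu[\ln l_\sigma]<0$ by \eqref{eq:ch2-exp-log-lip-const-contra-on-avg}. Hence there is $s\in(0,\min\{\alpha,\beta,1\}]$ with $\varphi(s)=:\theta<1$. This is the main obstacle: everything quantitative rests on securing such an $s$, and it is precisely here that the log-moment hypothesis and the polynomial-moment hypothesis must be played against each other, with \eqref{eq:ch2-beta-cond} supplying the domination for the differentiation step.

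Fixing that $s$, I would bound the increment's $s$-th moment by independence of $\sigma_{k+1}$ from $L_k$ (so $\mathbb E[L_k^s]=\varphi(s)^k$):
\[
\mathbb E\bigl[\rho^s(Y_{k+1}(x_0),Y_k(x_0))\bigr]\le \theta^k\,\mathbb E_\mu\bigl[\rho^s(w_\sigma(x_0),x_0)\bigr],
\]
where the last factor is finite by \eqref{eq:ch2-exp-lip-distant-condi}. Summing the geometric series and combining Markov's inequality with Borel--Cantelli yields $\sum_k\rho(Y_{k+1}(x_0),Y_k(x_0))<\infty$ almost surely, so by completeness of $\mathcal X$ the sequence $Y_k(x_0)$ converges; the second display above forces the same limit $Y_\infty$ from every start $x$, since $L_k\to0$ a.s.\ by the strong law. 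Because $X_k(x)\sim Y_k(x)\to Y_\infty$, the laws $\nu_k(x,\cdot)$ converge weakly to $\pi:=\mathcal L(Y_\infty)$; a routine Feller/continuity argument then identifies $\pi$ as invariant, and uniqueness follows since any invariant law is stationary yet must also converge to $\pi$. This establishes part (a).

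For the quantitative parts (b)--(c), I would couple $Y_k(x)\sim\nu_k(x,\cdot)$ with $Y_\infty\sim\pi$ on one probability space and telescope, using subadditivity of $t\mapsto t^s$ ($s\le1$) together with $\rho(w_\sigma(x),x)\le(l_\sigma+1)\rho(x,x_0)+\rho(w_\sigma(x_0),x_0)$ to obtain
\[
\mathbb E\bigl[\rho^s(Y_k(x),Y_\infty)\bigr]\le \frac{\theta^k}{1-\theta}\,\bigl(c_1+c_2\,\rho(x,x_0)\bigr)
\]
for finite $c_1,c_2$. The final step converts this into a Prohorov bound via the elementary estimate $d_{\text{pr}}(\mathcal L(U),\mathcal L(V))\le(\mathbb E[\rho^s(U,V)])^{1/(1+s)}$, obtained by balancing Markov's inequality against definition \eqref{eq:ch2-proho-dist}. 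This gives $d_{\text{pr}}(\nu_k(x,\cdot),\pi)\le a_x r^k$ with $r:=\theta^{1/(1+s)}\in(0,1)$ independent of $x$ and $k$, and $a_x:=\bigl((c_1+c_2\rho(x,x_0))/(1-\theta)\bigr)^{1/(1+s)}$; subadditivity of $t\mapsto t^{1/(1+s)}$ followed by $t^{1/(1+s)}\le 1+t$ then yields the affine bound $a_x<a+b\,\rho(x,x_0)$ claimed in (c).
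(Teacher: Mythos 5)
The paper states this theorem purely by citation to \cite{Diaconis1999} and offers no proof of its own, but the surrounding text (the forward/backward iterates \eqref{eq:ch2-for-diac}--\eqref{eq:ch2-back-diac} and Letac's principle) sets up exactly the route you take, and your reconstruction is correct: backward iteration with the same marginal law as the forward chain, extraction of an exponent $s$ with $\mathbb{E}_{\mu}[l_{\sigma}^{s}]<1$ from \eqref{eq:ch2-exp-log-lip-const-contra-on-avg} and \eqref{eq:ch2-beta-cond}, Borel--Cantelli for almost-sure convergence of $Y_k$ to a start-independent limit, and the Markov-inequality conversion of $s$-th moment bounds into the Prohorov estimate with $r=\theta^{1/(1+s)}$. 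The only point to polish is the claim that $\varphi'(0)=\mathbb{E}_{\mu}[\ln l_{\sigma}]$ by dominated differentiation: since $\ln l_{\sigma}$ need not be integrable from below, it is cleaner to observe that $(l_{\sigma}^{s}-1)/s$ decreases to $\ln l_{\sigma}$ as $s\downarrow 0$, so monotone convergence still forces $\varphi(s)<1$ for some small $s>0$.
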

\cite{Iosifescu2003} has shown the same exact conclusion using contraction characteristics of the two linear operators associated with the Markov chain.  \cite{Wu2000} and \cite{Jarner2001} exhibit these chains' additional stability features. See \cite{Abrams2003} for an extension of some results from \cite{Diaconis1999} with the maps with Lipschitz number one. 

In \cite{Jarner2001} the following assumptions have been made on the complete separable metric space $(\mathcal X, \rho)$:
\begin{assumption}\label{assump:ch2-jerner}
\begin{itemize}
\item [(a)] \emph{The metric is bounded and the diameter is finite}:
\begin{align}\label{eq:ch2-bdd-met-diam}
R=\rho(x,y)< \infty,  
\end{align}
\item[(b)] \emph{Maps are contracting on average}:
\begin{align}\label{eq:ch2-non-sep-avg}
\mathrm{E}\left[\rho\left(X_{1}(x), X_{1}(y)\right)\right]&\leq \rho(x, y)\quad \forall x,y \in \mathcal X,   \end{align}
\item[(c)] \emph{Local contraction}: Let $\tau_{C}(x)=\inf \left\{n \geq 1: X_{n}(x) \in C\right\}$. There exists $r\in (0,1)$ and a region $C$ in $\mathcal X$, satisfy the following:
\begin{align}\label{eq:ch2-local-contra}
\mathrm{E}\left[\rho\left(X_{\tau_{C}(x) \vee \tau_{C}(y)}(x), X_{\tau_{C}(x) \vee \tau_{C}(y)}(y)\right)\right] &\leq r \rho(x, y) \quad \forall x,y \in \mathcal X,
\end{align}
\item[(d)] \emph{Drift condition}: There exists $H: \mathcal X\to \mathbb R$ with $H\ge 1$, $H$ is bounded in the region $C$ with 
\begin{align}\label{eq:ch2-drift-condi}
D&=\sup\limits_{x\in C}\{H(x)\}<+\infty,    
\end{align}
and there exists constant $\lambda \in (0,1)$ and $b< +\infty$ such that
\begin{align}\label{eq:ch2-lyapu-condi}
\nu H(x)=\int H(x) \nu(x, dy)\le \lambda H(x)+ b 1_{C}(x),
\end{align}
where $\nu_k(x, \cdot)$ is the distribution of $X_k(x)$.
\end{itemize}
\end{assumption}
Now we state the main results from  \cite{Jarner2000}:
\begin{theorem}[\cite{Jarner2000}]
Under the assumptions made in Assumption~\ref{assump:ch2-jerner}, the IFS \eqref{eq:ch2-ifs-diac-realiz} possesses an unique invariant measure $\pi$ and regardless of any initial distribution
\begin{align}
X_k\to \pi \quad \text{ in distribution }, 
\end{align}
further there exists  $\gamma<1$,  $\kappa< \infty$ and for all $x\in \mathcal X$, $n\ge 1$ we have,
\begin{align}
d_{\text{pr}}\left(\nu_{k}(x, \cdot), \pi\right) \leq \gamma^{k} \kappa \left[H(x)\right]^{\frac{1}{2}}.
\end{align}
\end{theorem}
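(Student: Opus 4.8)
The plan is to run a \emph{synchronous coupling}: from two initial points $x,y$ drive both copies $X_k(x)$ and $X_k(y)$ by the \emph{same} i.i.d. sequence $\sigma_1,\sigma_2,\dots$, so that $X_{k+1}(\cdot)=w_{\sigma_{k+1}}(X_k(\cdot))$. Assumption~(b), i.e.\ \eqref{eq:ch2-non-sep-avg}, then says exactly that $D_k:=\rho(X_k(x),X_k(y))$ is a nonnegative supermartingale for the natural filtration $\mathcal F_k=\sigma(\sigma_1,\dots,\sigma_k)$, since $\mathbb E[D_{k+1}\mid\mathcal F_k]\le D_k$. Thus $D_k$ never grows in conditional mean, and by Assumption~(a) it is bounded by the diameter $R$. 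The whole argument reduces to upgrading this ``no growth'' into genuine geometric decay of $\mathbb E[D_k]$, and then converting an expected-distance bound into a Prohorov bound.

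First I would establish a per-return contraction. Let $\tau_C(x),\tau_C(y)$ be the first hitting times of $C$ and set the joint return time $T_1:=\tau_C(x)\vee\tau_C(y)$; Assumption~(c), i.e.\ \eqref{eq:ch2-local-contra}, gives $\mathbb E[D_{T_1}]\le r\,D_0$ with $r<1$. Restarting at $T_1$ by the strong Markov property and iterating produces successive joint return times $T_1<T_2<\cdots$ with $\mathbb E[D_{T_j}]\le r^{j}D_0\le r^{j}R$. To pass from the random times $T_j$ to a deterministic time $k$, I would apply optional stopping to the supermartingale $D$ at the bounded stopping time $T_j\wedge k\le k$, giving for every $j$
\begin{align*}
\mathbb E[D_k]\;\le\;\mathbb E[D_{T_j\wedge k}]\;\le\; r^{j}R \;+\; R\,\mathbb P(T_j>k).
\end{align*}
The first term decays geometrically in $j$; the tail $\mathbb P(T_j>k)$ is where the drift condition enters. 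The geometric drift \eqref{eq:ch2-lyapu-condi} with $H\ge1$ is a Foster--Lyapunov condition, so by the standard Meyn--Tweedie estimate it yields an exponential moment $\mathbb E_x[\kappa_0^{\tau_C}]\le c\,H(x)$ for some $\kappa_0>1$; because $H$ is bounded on $C$ by \eqref{eq:ch2-drift-condi}, only the \emph{first} return depends on the starting points, so a union bound over the two chains together with a large-deviation estimate for the later (uniformly controlled) inter-return times gives $\mathbb P(T_j>k)\le c'\,(H(x)+H(y))\,\theta^{\,k-j}$ for some $\theta<1$. Choosing $j=\lfloor\epsilon k\rfloor$ and balancing $r^{\epsilon k}$ against $\theta^{(1-\epsilon)k}$ then yields
\begin{align*}
\mathbb E\big[\rho(X_k(x),X_k(y))\big]\;\le\; C\,\gamma_0^{\,k}\,\big(H(x)+H(y)\big)
\end{align*}
for constants $C<\infty$ and $\gamma_0<1$ independent of $x,y,k$.

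With this pairwise estimate, existence and uniqueness come essentially for free: applying it to $\nu_k(x,\cdot)$ and $\nu_{k+m}(x,\cdot)=\int\nu_k(y,\cdot)\,\nu_m(x,dy)$ shows $\{\nu_k(x,\cdot)\}_k$ is Cauchy, and since $\mathcal X$ is Polish the space of Borel probabilities is complete in $d_{\text{pr}}$, so $\nu_k(x,\cdot)\to\pi_x$; the same estimate across starting points forces $\pi_x$ to be independent of $x$, and invariance and uniqueness follow from the semigroup property and the contraction. For the quantitative bound I would use the coupling (Strassen) characterisation of $d_{\text{pr}}$: for the synchronous coupling Markov's inequality gives $\mathbb P(D_k>\varepsilon)\le \mathbb E[D_k]/\varepsilon$, and taking $\varepsilon=\sqrt{\mathbb E[D_k]}$ makes both sides equal, so $d_{\text{pr}}(\nu_k(x,\cdot),\nu_k(y,\cdot))\le\sqrt{\mathbb E[D_k]}$ --- this is where the square root is born. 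Writing $\pi=\int\nu_k(y,\cdot)\,\pi(dy)$, using $\pi(H)<\infty$ (which follows by integrating \eqref{eq:ch2-lyapu-condi} against $\pi$), $\sqrt{a+b}\le\sqrt a+\sqrt b$, and absorbing the additive constant into $\sqrt{H(x)}$ (legitimate since $H\ge1$), I obtain $d_{\text{pr}}(\nu_k(x,\cdot),\pi)\le\gamma^{k}\kappa\,[H(x)]^{1/2}$ with $\gamma=\sqrt{\gamma_0}$.

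The main obstacle is the core expected-distance estimate: correctly coupling the two return-time clocks, justifying the iterated per-return contraction at the random times $T_j$ through the strong Markov property, and extracting the clean Lyapunov dependence $H(x)+H(y)$ from the drift while keeping the rate $\gamma_0$ uniform in the starting points. Once that estimate is secured, the passage to the Prohorov metric and to the invariant measure is essentially bookkeeping.
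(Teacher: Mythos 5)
The survey states this theorem without proof, merely citing \cite{Jarner2000}, so there is no in-paper argument to compare against; your proposal has to be judged against the original source. On that score you have reconstructed the right strategy: the proof there is indeed a synchronous coupling in which \eqref{eq:ch2-non-sep-avg} makes $\rho(X_k(x),X_k(y))$ a bounded nonnegative supermartingale, \eqref{eq:ch2-local-contra} supplies a strict contraction at the successive joint return times to $C$, the drift pair \eqref{eq:ch2-drift-condi}--\eqref{eq:ch2-lyapu-condi} furnishes exponential moments of those return times, and the square root in the final bound is exactly the Markov--Strassen conversion $d_{\text{pr}}\le\sqrt{\mathbb E[\rho]}$ that you identify. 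The optional-stopping inequality $\mathbb E[D_k]\le\mathbb E[D_{T_j\wedge k}]$ is applied in the correct direction for a supermartingale, and the existence/uniqueness argument via completeness of $\left(\mathcal M_{\text{p}}(\mathcal X),d_{\text{pr}}\right)$ together with the Chapman--Kolmogorov decomposition is the standard route.

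One step is asserted more confidently than it deserves. You claim that because $H$ is bounded on $C$, ``only the first return depends on the starting points'' and the later inter-return times are uniformly controlled. But $T_1=\tau_C(x)\vee\tau_C(y)$ is the \emph{later} of the two individual hitting times, so at time $T_1$ only one of the two coupled chains need be in $C$; the other visited $C$ earlier and may have wandered away, so $H$ evaluated at that copy's position at time $T_1$ is not bounded by $D$. To iterate the per-return contraction you must carry a quantitative bound on $\mathbb E\left[H(X_{T_j}(x))+H(X_{T_j}(y))\right]$ along the whole sequence of return times; the drift condition does deliver this (via the supermartingale $\lambda^{-(k\wedge\tau_C)}H(X_{k\wedge\tau_C})$ and the recursion $\mathbb E[H(X_{k+1})]\le\lambda\,\mathbb E[H(X_k)]+b$), but it is an explicit lemma rather than a one-line observation, and it is precisely what keeps the rate $\gamma_0$ uniform in $j$ and in the starting points. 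A second, smaller point: in the Cauchy argument you compare $\nu_k(x,\cdot)$ with $\nu_{k+m}(x,\cdot)=\int\nu_k(y,\cdot)\,\nu_m(x,\mathrm{d}y)$, which requires integrating your pairwise estimate in $y$ against $\nu_m(x,\cdot)$; this again calls on the drift to bound $\mathbb E_x[H(X_m)]$ uniformly in $m$. With those two lemmas supplied, the rest of your outline goes through and matches the argument of the cited reference.
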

It is clear from the above result that the chain exhibit stability properties as those found in \cite{Diaconis1999}. Further, under the following assumptions:
\begin{assumption}\label{assump:ch2-wu-shao}
\begin{itemize}
\item [(a)] There exists $x_0\in \mathcal X$ and a $\alpha\in (0,1]$ such that
\begin{align}\label{eq:ch2-wu-shao-cond1}
\mathbb E\left(\rho^{\alpha}\left(x_{0}, X_{1}\left(x_{0}\right)\right)\right)=\int_{\Theta} \rho^{\alpha}\left(x_{0}, X_{k}\left(x_{0}\right)\right) \mu(\mathrm{d} \theta)<\infty
\end{align}
\item[(b)] with $x_0$ and $\alpha$ chosen above, further, there exists $r(\alpha)=r\in (0,1)$ and $c(\alpha)=c>0$ such that 
\begin{align}\label{eq:ch2-wu-shao-cond2}
\mathbb E\left(\rho^{\alpha}\left(X_{k}(x_0), X_{k}\left(x\right)\right)\right) \leq c r^{k} \rho^{\alpha}\left(x_0, x\right) \quad \forall x_0,x\in \mathcal X \text{ and } \forall k\in \mathbb N.
\end{align}
\end{itemize}
\end{assumption}
\cite{WuShao2004} has showed that \eqref{eq:ch2-back-diac} converges almost surely as $n\to \infty$.

If $X\subseteq \mathbb R^n$ is a convex subset, then \cite{Steinsaltz1999}  has added a local contractibility requirement that essentially amounts to the presence of a drift function $g: \mathcal X \to (0,\infty)$ and a $r\in (0,1)$  such that
\begin{align}\label{eq:ch2-steinsalz}
\mathbb E\left(\limsup _{y \rightarrow x} \frac{\rho\left(X_{k}(y), X_{k}(x)\right)}{\rho(y, x)}\right) \leq g(x) r^{k} \quad \forall x \in \mathcal X \text{ and } \forall k\in \mathbb N.
\end{align}
Further to it, he  proved that under \eqref{eq:ch2-wu-shao-cond1} in conjunction with a suitable assumption on $g$, which is a generalization of \eqref{eq:ch2-wu-shao-cond1} a more general contraction condition than \eqref{eq:ch2-wu-shao-cond2} does hold, but not \eqref{eq:ch2-wu-shao-cond2} itself. Unifying results from \cite{Steinsaltz1999} and \cite{WuShao2004},   a more general contraction condition was introduced in \cite{Herkenrath2007} as follows:
\begin{itemize}
\item [(a)] There exists $x_0\in \mathcal X, \alpha \in (0,1], r\in (0,1)$, and measurable functions $g_k: \mathcal X\to [0,\infty), k\in \mathbb N$, such that both
\begin{align}\label{eq:ch2-gen-conds-H&I}
&r_1:= \frac{1}{ \limsup\limits _{k \rightarrow \infty} \delta_{k}^{\frac{1}{k}}} \text{ and } r_{2}(x):=\frac{1}{ \limsup\limits_{k \rightarrow \infty} g_{k}^{\frac{1}{k}}(x)} \text{ strictly exceed } r\quad \forall x\in \mathcal X, \text{ where }\\
&\delta_{k}:=\mathbb E\left(g_{k}\left(X_{1}\left(x_{0}\right)\right) \rho^{\alpha}\left(x_{0}, X_{1}\left(x_{0}\right)\right)\right), \quad n \in \mathbb{N}.\nonumber
\end{align}
\item[(b)] For all $x\in \mathcal X$ and for any $n\in \mathbb N$, one has,
\begin{align}\label{gencond2}
\mathbb E\left(\rho^{\alpha}\left(X_{k}(x), X_{k}\left(x_{0}\right)\right)\right) \leq r^{k} g_{k}(x) \rho^{\alpha}\left(x, x_{0}\right)
\end{align}
\end{itemize}
Clearly, $r_1$ and $r_2(x)$ are radius of convergence of the power series $\sum\limits_{n\in \mathbb N}\delta_kt^k$ and  $\sum\limits_{k\in \mathbb N}g_k(x)t^k$, $t\in \mathbb R$, respectively.

\section{Ergodic Theorems and Law of Large Number for IFS}\label{sec: IFS-ergo-theo-SLLN}
The principal objective of this part is to provide an overview of ergodic theory as a method for investigating statistical patterns in IFS. First, we attempt to use an illustrative example from a deterministic dynamical system, and then we try to use a random example. Finally, we try to understand how the patterns rely on the system's initial state. Assume we have modelled a highly complex phenomenon using a Markov chain formed by an IFS, $\{X_k\}$, developing in the state space $\mathcal X$. Thus, the system's state at time instant $k$ is $X_k\in \mathcal X$. The description of the system at any point in time may include many parameters, each of which must be represented by an element of $\mathcal X$. Experimental investigation of such systems may be arduous since the system's behaviour may be so complex that following all the specifics of its development is relatively tricky or implausible. Most of the time, it is not easy to speculate on the trajectory such a system will take. This may be the case sometimes because of computation difficulties and the system's inherent unpredictability and sensitivity to initial inputs.

A statistical technique is one approach that may be used when attempting to characterize the characteristics of a system. In most cases, the first step in conducting an experimental study of a system is to gather statistical data about the system by making several observations or taking multiple measurements. A measurement is a technique to associate any state $x\in \mathcal{ X}$ of the system in consideration with a real number $w(x)\in \mathbb R$, i.e., it is a map $w:\mathcal X \to \mathbb R$. A measurement $w \left(X_k\right)$ of the system $\{X_k\}_{k\ge 0}$ at time $k$ yields the value $w(X_k)$. Because a single measurement like this discloses nothing about the system, it makes sense to do the measurement many times and then take the average of the results obtained over different periods. The outcome of this technique if the measurements are carried out at periods $1,2,\dots,k$ is
\begin{align}\label{eq:ch2-time-average-1}
\frac{w(X_1)+\ldots+w(X_n)}{k}.    
\end{align}
This \emph{time-averaged data} may be used as approximations to a desired \emph{true value}. Additionally, one hopes that when additional observations are averaged, the resultant estimate approaches the ideal real value. In other words, gathering increasing amounts of data is only beneficial if a variant of the law of large numbers is true, i.e., there is a $\bar x\in \mathbb R$ 
\begin{align}\label{eq:ch2-time-average-2}
\lim\limits_{k\to \infty} \frac{w\left(X_1\right)+\ldots+w\left(X_n\right)}{k}\to \bar x.    
\end{align}
Additionally, the infinite time horizon average should be independent of the system's initial state, or the limiting value should be challenging to grasp. Consequently, the following logical problems arise: Is there any statistical regularity in the system? How does the initial condition affect the system's statistical properties? Is the system capable of remembering its initial condition, or does it ultimately forget? Next, we try to address these questions in the context of IFS.

\subsection{A Motivating Example from Deterministic Dynamics}\label{subsec:moti-dds}
On $\mathbb R$, a basic discrete linear dynamical system (without randomness) may be described as the following transformation map for some constant $a\in (0,1)$:
\begin{align}\label{eq:ch2-lin-det-dyna}
w_1: \mathbb R\to \mathbb R, w_1(x)=ax
\end{align}
If we apply the map $k$ times, we get $X_k(x)=a X_{k-1}=w_1^k(x)=a^kx\to 0$ as $k\to \infty$, indicating that $0$ is a stable, attractive fixed point of the dynamics regardless of where we begin. The convergence toward this single-point attractor is exponential. Its attractive domain is identical to $\mathbb R$. Thus, the system rapidly loses memory because of the map's contraction and intrinsic stability. Regardless of the original state, the sequence of points $X_k\to 0$ as $k\to \infty$, i.e., the system's initial condition is eventually forgotten. On the other hand, one may illustrate a non-stable (see \cite{Bakhtin2015}) system with the state space $\mathcal X=[0,1)$ and the transformation
\begin{align}
w_2: [0,1)\to [0,1),\quad w_2(x)=\{2x\} \text{ i.e fractional part of the real number } 2x,\; x\in [0,1).    
\end{align}  
As a result, we can show that stability at the trajectory level does not hold in the classical fashion by demonstrating it using these two above cases. However, a definition of stability more focused on system statistics might still be legitimate. To put it another way, if $f$ is an integrable and Borel measurable function on $[0,1)$, then it is true that for any $x\in [0,1)$ with respect to the \emph{Lebesgue measure} on $[0,1]$,
\begin{align}\label{eq:ch2-erid}
\lim\limits_{k \rightarrow \infty} \frac{1}{k}\sum_{i=1}^{k} f\left(w_2^{i-1}(x)\right)=\int_{[0,1)} f(x) dx. \end{align}
Given that the right-hand side reflects the average of $f$ with respect to the Lebesgue measure on $[0,1)$, one may conclude that the \emph{Lebesgue measure} accurately characterises the attributes of this dynamic system or uniform distribution on $[0,1)$ in the long run.

Indeed, one interpretation of \eqref{eq:ch2-erid} is that for practically every initial condition $x$, the distribution of $\phi(x)$ with assigning equal mass of $\frac{1}{k}$ unit to each of $x, w_2(x),\ldots, w_2^{k-1}(x)$ converges to the \emph{Lebesgue measure}. Additionally, the convergence in \eqref{eq:ch2-erid} might be seen as a law of large numbers and more precisely \emph{Lebesgue measure} is unique in this system since it defines the limits in \eqref{eq:ch2-erid} and invariant under $w_2$. It is ergodic; it cannot be reduced to two nontrivial invariant measures.

\subsection{A Motivating Example from Random Dynamics}\label{subsec:moti-rds}
Similar concerns, as stated just before, arise naturally when considering random maps rather than deterministic maps. In general, the long-term behaviour of chaotic deterministic dynamical system trajectories is uncertain \cite[Chapter 2]{Boyarsky2012}. As a result, it is appropriate to use statistical methods to characterize the system's behaviour in aggregate. Random maps may also form naturally due to random perturbations of deterministic dynamics. 

For example, recall that the previous dynamics stated in \eqref{eq:ch2-lin-det-dyna} where the map was $w_1: x\mapsto ax$ for which $0$ was a stable equilibrium point. Let us introduce random noise into this system to destroy the equilibrium. Assume we have a sequence of independent, identically normally distributed random variables $\{\sigma_k(\omega)\}_{k\in \mathbb Z}\sim N(0, v^2)$ specified on $(\mathcal{X}, \mathcal E, \mathbb P)$ with $\omega$ as usual represent a sample point. Define random maps for $k\in\mathbb Z$ as follows: 
\begin{align}\label{eq:ch2-rd}
w_{k,\omega}(x)=ax+\sigma_k(\omega)    
\end{align}
A natural analogue of the forward orbit from the example \eqref{eq:ch2-lin-det-dyna} is a
a stochastic process $\{X_k\}_{k\ge 0}$ with starting from $X_0=x_0\in \mathbb R$ such that
\begin{align}\label{eq:ch2-stoch-proces}
X_{k+1}=aX_k+\sigma_k
\end{align}
Stability analysis, in this case, is not straightforward as it was for \eqref{eq:ch2-lin-det-dyna}. No fixed equilibrium point serves all maps $\{w_{k,\omega}\}_{k\in\mathbb Z}$ at once. It is possible that the solution of $w_{k,\omega}(x)=x$ for some values of $k$ doesn't make any sense to all other different values of $k\in\mathbb Z$. Nevertheless, this system can produce an ergodic result analogous to \eqref{eq:ch2-erid}. Let $\mu\sim N(0, \frac{\sigma^2}{1-a^2})$. Then for any integrable function $f$ with respect to $\mu$, for almost every $(x_0, \omega)\in \mathbb R \times \Omega$ with respect to $\mu\times \mathbb P$
\begin{align}\label{eq:ch2-ergo-like-prop}
\lim _{k \rightarrow \infty} \frac{1}{k} \sum_{i=1}^{k} f\left(X_{i-1}\right)=\int_{\mathbb{R}} f(x) \nu(d x)
\end{align}
This finding explains that $\mu$ is a unique invariant measure for the Markov operator associated with the system. This example illustrates a scenario without deterministic stability, yet statistical stability exists.

\subsection{Ergodic Theorems for IFS on Compact Space}\label{subsec:ergo-thm-cpts}
For dynamical systems, Birkhoff's Ergodic Theorem \cite[Chapter 5, Theorem 5.1.1]{Silva2008} is well-known: Let
\begin{align*}
T: (\mathcal X, \mathcal B(\mathcal X), \nu)\to (\mathcal X, \mathcal B(\mathcal X), \nu)    
\end{align*}
be an \emph{ergodic dynamical system}, further, suppose $w\in L^1(\mathcal X, \mathcal B(\mathcal X), \nu)$, then for almost every $x$
\begin{align}
\lim _{k \rightarrow \infty} \frac{1}{k} \sum_{i=0}^{k-1} w\left(T^{i}(x)\right)=\int w \mathrm{d} \nu.
\end{align}
The \emph{point-wise Ergodic theorem or strong law of large number} for IFS is due to \cite{Elton1987}. The result was later sharpened in \cite[Proposition 7.1]{Peigne1993} under some hypothesis. 
\begin{theorem}[\cite{Elton1987}]
Consider an IFS described in Definition~\ref{dfn:ch2-ifswsdpwfnm} with associated probabilities $p_i(x):\mathcal X\to [0,1]$ that satisfy the mild technical condition in Definition~\ref{dfn:ch2-dini-cont}. Let $\nu^{\star}$ denote the unique invariant probability measure of the IFS. Then for almost all $x\in \mathcal X$ and for all $w\in C(\mathcal X,\mathbb R)$
\begin{align}
\lim _{k \rightarrow \infty} \frac{1}{k+1} \sum_{i=0}^{k} w\left(\left(w_{\sigma_{i}}\circ w_{\sigma_{i-1}}\circ \cdots\circ w_{\sigma_{0}}\right)(x)\right)=\int w \mathrm{d} \nu.
\end{align}
\end{theorem}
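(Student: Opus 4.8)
The plan is to recognize the assertion as a strong law of large numbers (a pointwise ergodic theorem) for the forward trajectory of the IFS and to reduce it to Breiman's ergodic theorem (Theorem~\ref{thm:ch2-breiman-ergo}) already recorded above. The first observation is bookkeeping: if we set $X_0=x$ and run the recursion \eqref{eq:ch2-iterate-rel}, the forward composition in the statement is exactly $X_{i+1}=(w_{\sigma_i}\circ\cdots\circ w_{\sigma_0})(x)$, so that
\begin{align*}
\frac{1}{k+1}\sum_{i=0}^{k} w\bigl((w_{\sigma_i}\circ\cdots\circ w_{\sigma_0})(x)\bigr)=\frac{1}{k+1}\sum_{i=1}^{k+1} w(X_i),
\end{align*}
which differs from the Cesàro average $\tfrac{1}{k+1}\sum_{i=0}^{k} w(X_i)$ of Breiman's theorem only by the two boundary terms $w(X_0)$ and $w(X_{k+1})$, each of absolute value at most $\|w\|_\infty$ and hence contributing $O(1/k)$. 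Thus it suffices to prove the claim for the chain $\{X_i\}_{i\ge 0}$ itself.

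To invoke Theorem~\ref{thm:ch2-breiman-ergo} I would verify its two hypotheses. First, the \emph{Feller property}: the Markov operator $\mathrm{P}$ of \eqref{eq:ch2-markov-operator} maps $C(\mathcal X,\mathbb R)$ into itself, which is immediate from the continuity of the maps $w_i$ together with the Dini (hence continuous) probabilities $p_i$, as noted after \eqref{eq:ch2-markov-operator}. Second, the \emph{existence and uniqueness of the invariant measure} $\nu^{\star}$: under the standing contraction-on-average hypothesis (Definition~\ref{dfn:ch2-contra-on-av-1-step}) and Dini continuity of the $p_i$ (Definition~\ref{dfn:ch2-dini-cont}), this is exactly the Barnsley--Hutchinson theorem recalled in Section~\ref{subsec:ergo-prop-mp2}. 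With both hypotheses in force on the compact space $\mathcal X$, Breiman's theorem delivers $\tfrac{1}{k+1}\sum_{i=0}^{k} w(X_i)\to \int w\,\mathrm{d}\nu^{\star}$ almost surely for \emph{every} initial condition $X_0=x$, which is even stronger than the ``almost all $x$'' claimed, and the index reconciliation above then finishes the argument.

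Because this route merely cites Breiman, it is worth sketching the engine that makes it true. One clean route lifts the chain to the stationary path space $(\mathcal X^{\mathbb N_0},\mathcal P_{\nu^{\star}},S)$, where $\mathcal P_{\nu^{\star}}$ is the law of the chain started from $\nu^{\star}$ and $S$ is the coordinate shift; since $\nu^{\star}$ is invariant, $S$ preserves $\mathcal P_{\nu^{\star}}$, and Birkhoff's theorem (Section~\ref{subsec:ergo-thm-cpts}) applied to the observable $\underline{x}\mapsto w(x_0)$ yields a.e.\ convergence of the time averages to the conditional expectation $\mathbb E[\,w(x_0)\mid\mathcal I\,]$ onto the shift-invariant $\sigma$-field $\mathcal I$. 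Uniqueness of $\nu^{\star}$ forces $\mathcal I$ to be $\mathcal P_{\nu^{\star}}$-trivial: the invariant measures form a convex set whose extreme points are precisely the ergodic ones, so a unique invariant measure is automatically ergodic, and the limit collapses to the constant $\int w\,\mathrm{d}\nu^{\star}$. This already gives the stated claim, since ``$\mathcal P_{\nu^{\star}}$-a.e.\ path'' means ``for $\nu^{\star}$-a.e.\ starting point $x$, for $\mathbb P_x$-a.e.\ choice sequence''.

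The main obstacle lies in strengthening ``$\nu^{\star}$-a.e.\ $x$'' to the full every-$x$ statement of Breiman and Elton, i.e.\ in handling starting points off the support of $\nu^{\star}$; the Birkhoff route says nothing there. Upgrading uses the Feller property plus unique ergodicity, equivalently the uniform convergence $\tfrac{1}{n}\sum_{k<n}\mathrm{P}^k w\to\int w\,\mathrm{d}\nu^{\star}$ in sup-norm, followed by a martingale argument: with $g=w-\int w\,\mathrm{d}\nu^{\star}$ and $h$ solving the Poisson equation $h-\mathrm{P}h=g$, one writes
\begin{align*}
w(X_k)=\bigl(h(X_k)-h(X_{k+1})\bigr)+\bigl(h(X_{k+1})-(\mathrm{P}h)(X_k)\bigr)+\int w\,\mathrm{d}\nu^{\star},
\end{align*}
where the first bracket telescopes to $O(1/n)$ for bounded $h$ and the second is a martingale-difference sequence with increments bounded by $2\|h\|_\infty$, whose normalized partial sums vanish a.s.\ by the martingale strong law. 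The crux is \emph{solvability of the Poisson equation with a bounded $h$}, i.e.\ summability of $\sum_{k\ge 0}\mathrm{P}^k g$ in sup-norm; this is precisely where the contraction-on-average condition does the real work, furnishing geometric decay of $\|\mathrm{P}^k g\|_\infty$ on the space of Dini/Lipschitz functions. Elton's original argument circumvents an exact solution by truncation and an approximate Poisson identity combined with the uniform Cesàro convergence, which is more robust but computationally heavier.
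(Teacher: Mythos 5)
Your proposal is correct in outline, but note that the paper itself offers no proof of this statement: it is quoted from Elton (1987), with pointers to short proofs elsewhere, so there is no in-paper argument to match yours against. That said, your reduction to Theorem~\ref{thm:ch2-breiman-ergo} is precisely the connective tissue the survey itself lays out in Section~\ref{subsec:ergo-prop-mp1}, where Breiman's theorem is introduced expressly ``to establish the ergodic property of the related Markov process created by an IFS.'' Your bookkeeping of the index shift, the verification of the Feller property from \eqref{eq:ch2-markov-operator}, and the appeal to the Barnsley--Elton uniqueness result under contraction on average plus Dini continuity are all sound, and they deliver the conclusion for \emph{every} starting point, which subsumes the ``almost all $x$'' in the statement (where ``almost all'' is best read as referring to the code sequences $\sigma$). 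Two points deserve emphasis. First, the theorem as stated in the survey does not explicitly repeat the contraction-on-average hypothesis (Definition~\ref{dfn:ch2-contra-on-av-1-step}); it is a standing assumption of Section~\ref{subsec:ch2-ergo-prop-ifswsdpwfnm}, and without it the ``unique invariant measure'' in the statement need not exist, so you are right to surface it as a hypothesis you must invoke. Second, in your sketch of the engine behind Breiman, the exact Poisson equation $h-\mathrm{P}h=g$ with bounded $h$ is generally \emph{not} solvable for merely continuous $w$ (geometric decay of $\|\mathrm{P}^k g\|_\infty$ is available on Lipschitz or Dini classes, not on all of $C(\mathcal X,\mathbb R)$); the argument must go through the approximate Poisson identity built from the uniform Ces\`aro convergence $\frac{1}{n}\sum_{k<n}\mathrm{P}^k w\to\int w\,\mathrm{d}\nu^{\star}$, which holds for uniquely ergodic Feller chains on compact spaces, followed by the martingale strong law and a countable limiting argument in the approximation parameter. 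You flag this yourself in the last sentence, so the gap is acknowledged rather than silent, but if you intend the sketch to stand as a proof rather than a citation of Breiman, that approximation step is the one you must actually carry out.
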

See also \cite{Mendivil1998(1)}, and \cite{Tricot1999} for short proofs of the above result. All these results hold for the state space of IFS as a closed subset of $\mathbb R^n$.

\subsection{Ergodic Theorems for IFS  on Polish Space}\label{subsec:ergo-thm-pols}
In \cite{Stenflo2001(3)} an weak ergodic theorem including
rate of convergence for Markov chains arising from IFS with a countably infinite number of functions under a \emph{stochastically contractive and boundedness condition}\cite[Condition C \& D, Theorem 2]{Stenflo2001(3)} and the method of proving the results were inspired from work introduced in \cite{Letac1986, Burton1995, Loskot1995, Silvestrov1998, Janoska1995}. Recently, in \cite[Section 2.4,3.4,3.7]{Hermer2022} under some greater regularity assumptions on non-expansive transformations (such as $\alpha$-firmly non-expansive), the ergodic and convergence aspects of random function iteration have been extensively explored. In \cite[section 2]{Joanna2008} the concept of \emph{generalized contraction of a transformation map} was introduced. Subsequently, the asymptotic behaviour of IFS with a finite or countably infinite number of maps with state-dependent probabilities consisting of at least one \emph{generalized contraction} was studied to prove their asymptotic stability property. For more on ergodic properties of IFS on Polish space we refer \cite{Lasota1998, Szarek1997, Szarek2000(1), Szarek2000(2), Szarek2003(1), Szarek2003(2)}.

\section{Central Limit Theorem (CLT) and Invariance Principle for IFS}\label{sec:CLT-IFS-invp}
Limit theory for stochastic dynamical systems arising from the IFS defined as in the Definition~\ref{df:ch2-ifswsindp} see, e.g in \cite{Borovkov1998, Bhattacharya2007, Meyn1993}. Suppose an IFS of the form \ref{df:ch2-ifswsindp} have a stationary distribution $\nu^{\star}$, let $w\in \mathcal L_0^2(\nu^{\star})$ be square integrable function on $\mathcal X$ with mean zero  and let
\begin{align}\label{eq:ch2-clt}
\mathcal S_k(w)=\sum\limits_{i=1}^{k} w(X_i),
\end{align}
then \cite{Wu2000} studied central limit theorems for IFS i.e, under what conditions on $w$ and $w_{\sigma_k}$, is $\frac{\mathcal S_k w}{\sqrt k}$ asymptotically normal as $k\rightarrow \infty$. By decomposing \eqref{eq:ch2-clt} into a martingale and a stochastically bounded component as in \cite{Gordin1978}, \cite{Benda1998} proved, that the above result is true if \eqref{eq:ch2-exp-lip-distant-condi} holds for $\alpha=2$, $w$ is Lipschitz continuous and $\mathbb E_{\mu}\left[l^2_{\sigma}\right]<1$. Even for a discontinuous $w$ and more relaxed conditions on the $l_{\sigma}$ and \eqref{eq:ch2-exp-lip-distant-condi}, \cite{Wu2000, WuShao2004} showed that the central limit theorems hold. If $w_1,\dots, w_m$ are Lipschitz self-maps on $\mathcal X$ with $l_1,\dots, l_m$ Lipschitz constants respectively, then \cite{Loskot1995} has derived central limit theorem for the random variables $w(X_k)$ for any Lipschitz function $w: \mathcal X\to \mathbb R$ for the IFS defined as in Definition \ref{df:ch2-ifswsindp} with the additional condition on the Lipschitz constants, and the probabilities have been taken as follows:
\begin{align}
\sum\limits_{i=1}^{m}\mu(\sigma_i) l^2_{\sigma_i}<1.   
\end{align}
For the IFS in Definition \ref{subsec:ch2-dfn-ifswsdpwfnm},
from \cite{Barnsley1985, Hutchinson1981, Elton1992} one can conclude that there exists an unique compact set $\mathcal K$ that is invariant for the IFS in the sense that
\begin{align*}
\mathcal K=\bigcup\limits_{i=1}^{N} w_i(\mathcal K).    
\end{align*}
Such chains allow greater flexibility for generating a \emph{uniform stationary probability distribution} on $
\mathcal K$. These kinds of Markov chains are seen in the thermodynamic formalism of statistical mechanics.

Existence and uniqueness of possessing a unique stationary distribution are not usual for these chains \cite{Berger2018, Bramson1993, Lacroix2000, Stenflo2001(2), Stenflo2002}, but some additional regularity conditions are needed, see, e.g., \cite{Harris1955, Johansson2003, Stenflo2002, Stenflo2003}. In \cite{Peigne1993}, a CLT has been proved by the spectral decomposition technique of the associated Markov operator. For details on the spectral decomposition of the Markov, operator see, e.g., \cite{Hennion1993, Ionescu1948, Ionescu1959}. On a non-compact metric space, a CLT for IFS with Lipschitz maps has been proved in \cite{Hennion2004}; for more literature on these, we refer \cite{Andreas2005, Herkenrath2007, Horbacz2016(2), Haggstrom2007, Haggstrom2005}.

\subsection{CLT for IFS with State-Independent Probabilities with Countably Infinite Number of Self Maps on Polish Space}\label{subsec: CLT-IFSWSIDP}
Let us formulate some background to demonstrate the law of large numbers for the chain established in \cite{Kapica2018}. Let $P$ be a Markov operator acting on the space of measures on $\mathcal X$ such that $P\nu_n=\nu_{n+1}$ i.e $P(\text{ law of } X_n)=(\text{ law of } X_{n+1})$, where $\nu_n$ is the distribution of $X_n$. The following assumptions were made to establish LLN for time-homogeneous Markov chain $(X_n)_{n\ge 1}$.
\begin{assumption}
\begin{itemize}
\item [(a)] There exists a probability measure $\nu_{\star}\in \mathcal M_p(\mathcal X)$, a continuous function $g: \mathcal X\to \mathbb R_{\ge 0}=[0,+\infty)$, and a $q\in (0,1)$ such that
\begin{align}\label{asmp1}
\left\|P^{n} \delta_{x}-\nu_{\star}\right\|_{F M} \leqslant g(x) q^{n}.
\end{align}
\end{itemize}
\end{assumption}
\begin{theorem}[\cite{Kapica2018}]
Let $P: \mathcal M(\mathcal X)\to \mathcal M(\mathcal X)$ be a Markov operator which satisfies \eqref{asmp1}. Let $X_n$ be a time-homogeneous Markov chain taking values in $\mathcal X$, with transition operator $P$ and initial distribution $\nu$ i.e law of $X_0=\nu$. Further  to this, there exists a $\nu_{\star}\in \mathcal M(\mathcal X)$ such that $\lim\limits_{n\to \infty} P^n\nu= \nu_{\star}$ for every starting distribution $\nu$.  Then for every Lipschitz and bounded function $f: \mathcal X\to \mathbb R$ the sequence $f(X_n)$ satisfies LLN i.e
\begin{align}
\frac{1}{n} \sum_{k=0}^{n-1} f\left(X_{k}\right) \underset{n \rightarrow \infty}{\longrightarrow} \int_{X} f(x) \nu_{*}(\mathrm{d} x). 
\end{align}
\end{theorem}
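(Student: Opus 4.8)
The plan is to deduce the law of large numbers from the geometric ergodicity \eqref{asmp1} by an $L^2$ (variance) estimate on the time averages, and then to upgrade this to almost-sure convergence. First I would normalise: replace $f$ by the centred function $\tilde f := f - \int_{\mathcal X} f\,\mathrm d\nu_\star$, which is again bounded and Lipschitz and for which the target becomes $0$, so that it suffices to prove $\tfrac1n\sum_{k=0}^{n-1}\tilde f(X_k)\to 0$. The bridge between the hypothesis, stated in the Fortet--Mourier norm, and the functional $\tilde f$ is the duality estimate
\begin{equation*}
\bigl|(P^m\tilde f)(x)\bigr|=\Bigl|\int_{\mathcal X}\tilde f\,\mathrm d\bigl(P^m\delta_x-\nu_\star\bigr)\Bigr|\le c_f\,\|P^m\delta_x-\nu_\star\|_{FM}\le c_f\,g(x)\,q^{m},
\end{equation*}
where $c_f:=\max\{\|\tilde f\|_\infty,\|\tilde f\|_{\mathrm{Lip}}\}$, using that $P^m$ fixes constants so that $(P^m\tilde f)(x)=\int f\,\mathrm d(P^m\delta_x)-\int f\,\mathrm d\nu_\star$ and that $\tilde f$ and $f$ differ only by a constant, which the signed measure $P^m\delta_x-\nu_\star$ annihilates.

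Next I would estimate the covariances. For $i\le j$ the Markov property gives $\mathbb E[\tilde f(X_i)\tilde f(X_j)]=\mathbb E[\tilde f(X_i)\,(P^{\,j-i}\tilde f)(X_i)]$, so the pointwise bound above yields
\begin{equation*}
\bigl|\mathbb E[\tilde f(X_i)\tilde f(X_j)]\bigr|\le c_f\,\|\tilde f\|_\infty\,q^{\,j-i}\,\mathbb E\bigl[g(X_i)\bigr].
\end{equation*}
Writing $\bar S_n:=\tfrac1n\sum_{k=0}^{n-1}\tilde f(X_k)$ and expanding $\mathbb E[\bar S_n^2]$, the $n$ diagonal terms contribute $O(1/n)$ via the crude bound $|\tilde f|\le\|\tilde f\|_\infty$, while the off-diagonal terms are bounded by $\tfrac{2c_f\|\tilde f\|_\infty}{n^2}\bigl(\sup_i\mathbb E[g(X_i)]\bigr)\sum_{0\le i<j<n}q^{\,j-i}$, and since $\sum_{0\le i<j<n}q^{\,j-i}\le n\,\tfrac{q}{1-q}$ this is again $O(1/n)$. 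Hence $\mathbb E[\bar S_n^2]=O(1/n)\to 0$, which gives convergence in $L^2$, and therefore in probability, to $\int f\,\mathrm d\nu_\star$.

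The hard part is the uniform moment bound $M:=\sup_i\mathbb E[g(X_i)]=\sup_i\int g\,\mathrm dP^i\nu<\infty$, on which the entire off-diagonal estimate rests; without it the summable lag factor $q^{\,j-i}$ is of no use. I would obtain it from the hypothesis $P^i\nu\to\nu_\star$: the convergent sequence $\{P^i\nu\}$ is tight, so mass cannot escape to infinity, and combined with the continuity of $g$ (together with a domination or uniform-integrability argument where $g$ fails to be bounded) this makes $\int g\,\mathrm dP^i\nu$ bounded, in fact convergent to $\int g\,\mathrm d\nu_\star$. Finally, to pass from $L^2$ to the almost-sure statement I would run the standard subsequence argument: along $n_\ell=\ell^2$ the bound $\mathbb E[\bar S_{n_\ell}^2]=O(1/\ell^2)$ is summable, so Borel--Cantelli (via $\sum_\ell\mathbb E[\bar S_{n_\ell}^2]<\infty$) forces $\bar S_{n_\ell}\to 0$ almost surely, and the fluctuations $\max_{n_\ell\le n<n_{\ell+1}}|\bar S_n-\bar S_{n_\ell}|$ are controlled by $\tfrac{n_{\ell+1}-n_\ell}{n_\ell}\bigl(|\bar S_{n_\ell}|+\|\tilde f\|_\infty\bigr)\to 0$ using $\|f\|_\infty<\infty$, which upgrades the convergence to all $n$.
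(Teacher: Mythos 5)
The survey does not actually prove this statement; it records it as a result cited from \cite{Kapica2018} without argument, so there is no in-paper proof to compare yours against and I am judging the proposal on its own terms. Your skeleton is the right one for this kind of result: centering $f$, the Fortet--Mourier duality bound $|(P^m\tilde f)(x)|\le c_f\,g(x)\,q^m$, the resulting covariance decay, the $O(1/n)$ variance estimate, and the Chebyshev-plus-subsequence upgrade are all standard and, as far as they go, correctly executed (your fluctuation control between $n_\ell=\ell^2$ and $n_{\ell+1}$ checks out).

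The genuine gap is exactly where you located it: the uniform moment bound $M=\sup_i\mathbb E[g(X_i)]<\infty$. The hypothesis \eqref{asmp1} only requires $g$ to be continuous and nonnegative, so it may be unbounded, and neither \eqref{asmp1} nor the convergence $P^n\nu\to\nu_\star$ (which is at best tested against \emph{bounded} Lipschitz functions) gives any control of $\int g\,\mathrm{d}(P^i\nu)$. Tightness plus continuity of $g$ is not enough: a tight, weakly convergent sequence of probability measures can satisfy $\int g\,\mathrm{d}(P^i\nu)=+\infty$ for every $i$ when $g$ is unbounded, and no dominating function or uniform-integrability criterion is supplied by the stated hypotheses, so the ``domination or uniform-integrability argument'' you invoke cannot be manufactured from what is given. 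This is the substantive missing ingredient rather than a presentational one: the off-diagonal sum $\sum_{i<j}q^{j-i}\,\mathbb E[g(X_i)]$ cannot be closed without an extra assumption of the form $\sup_n\int g\,\mathrm{d}(P^n\nu)<\infty$ (or an integrability condition on the initial law together with a drift-type inequality propagating it), which is precisely the kind of additional moment condition the original source imposes and which the survey's paraphrase of the theorem has dropped. A secondary remark: the statement does not specify the mode of convergence, and its companion theorem in the same subsection claims only convergence in probability; for that, your $L^2$ estimate already suffices and the Borel--Cantelli subsequence step, though correct, is unnecessary.
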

In addition, with the \cite[Assumption \bf{B1}-\bf{B4}]{Kapica2016},
\begin{theorem}[\cite{Kapica2018}]
If $X_n$ be a homogeneous Markov chain on a polish space $(\mathcal X, \rho)$ with transition operator $P$ and initial distribution $\mathcal L(X_0)=\nu$, and let $g: \mathcal X\to \mathbb R$ be any bounded, Lipschitz function then
\begin{align}
\mathbb{E} g\left(X_{n}\right) \underset{n \rightarrow \infty}{\longrightarrow} \int_{\mathcal X} g(x) \nu_{\star}(\mathrm{d} x),
\end{align}
where $\nu_{\star}$ is invariant probability distribution for $P$ and moreover the sequence $g(X_n)$ satisfies LLN i.e
\begin{align}
\frac{1}{n} \sum_{k=0}^{n-1} g\left(X_{k}\right) \underset{n \rightarrow \infty}{\longrightarrow} \int_{\mathcal X} g(x) \nu_{\star}(\mathrm{d} x) \quad \text { in probability } \mathbb{P}.
\end{align}
\end{theorem}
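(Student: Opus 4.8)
The plan is to prove the two assertions separately: first the convergence of the expectations $\mathbb E g(X_n)$, as a warm-up, and then the law of large numbers via a second-moment (Chebyshev) estimate. Throughout I write $L$ for the nonnegative continuous function appearing in the mixing bound \eqref{asmp1} (there denoted $g$), so as not to collide with the test function $g$ of the theorem, and I set $\bar g := \int_{\mathcal X} g \, \mathrm d\nu_{\star}$. I use $\|\cdot\|_{FM}$ for the Fortet--Mourier norm, which by duality satisfies $|\int f \, \mathrm d(\mu_1-\mu_2)| \le \|f\|_{BL}\,\|\mu_1-\mu_2\|_{FM}$ for every bounded Lipschitz $f$, where $\|f\|_{BL}$ combines the supremum norm and the Lipschitz constant.

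For the convergence of $\mathbb E g(X_n)$, note that $X_n$ has law $P^n\nu$, and by linearity of the Markov operator on measures $P^n\nu = \int_{\mathcal X} P^n\delta_x \, \nu(\mathrm dx)$. Using convexity of the norm together with \eqref{asmp1} I would get $\|P^n\nu - \nu_{\star}\|_{FM} \le \int_{\mathcal X}\|P^n\delta_x - \nu_{\star}\|_{FM}\,\nu(\mathrm dx) \le q^n \int_{\mathcal X} L \, \mathrm d\nu$. Provided $\int_{\mathcal X} L \, \mathrm d\nu < \infty$ — one of the roles of the hypotheses $\mathbf{B1}$--$\mathbf{B4}$ — this yields $|\mathbb E g(X_n) - \bar g| \le \|g\|_{BL}\, q^n \int_{\mathcal X} L \, \mathrm d\nu \to 0$ geometrically; the bare qualitative convergence also follows directly from the preceding theorem, which asserts $P^n\nu \to \nu_{\star}$ for every starting distribution.

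For the law of large numbers I would apply Chebyshev's inequality, so the claim reduces to $\mathbb E[(n^{-1}\sum_{k=0}^{n-1} Y_k)^2]\to 0$ where $Y_k := g(X_k) - \bar g$. Expanding the square gives $\frac{1}{n^2}\sum_{k,l}\mathbb E[Y_k Y_l]$. The diagonal terms satisfy $\sum_{k=0}^{n-1}\mathbb E[Y_k^2] \le 4\|g\|_\infty^2\, n$, which is $O(n)$ and harmless after dividing by $n^2$. For an off-diagonal term with $l = k+j$, $j\ge 1$, I would condition on $X_k$ and invoke the Markov property: since $Y_k$ is $X_k$-measurable, $\mathbb E[Y_k Y_l] = \mathbb E[Y_k\, \mathbb E[Y_l \mid X_k]]$, and $\mathbb E[Y_l \mid X_k] = \int_{\mathcal X} g \, \mathrm d(P^j\delta_{X_k}) - \bar g$ has modulus at most $\|g\|_{BL}\,L(X_k)\,q^j$ by \eqref{asmp1}. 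Combined with $|Y_k| \le 2\|g\|_\infty$ this gives $|\mathbb E[Y_k Y_l]| \le 2\|g\|_\infty\|g\|_{BL}\, q^j\, \mathbb E[L(X_k)]$. Summing the geometric factor and using a uniform moment bound $M := \sup_k \int_{\mathcal X} L \, \mathrm d(P^k\nu) < \infty$, the total off-diagonal contribution is again $O(n)$, so the whole second moment is $O(1/n) \to 0$, and Chebyshev delivers convergence in probability.

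The main obstacle is exactly the uniform moment bound $\sup_k \int_{\mathcal X} L \, \mathrm d(P^k\nu) < \infty$, along with the finiteness of $\int_{\mathcal X} L \, \mathrm d\nu$ needed in the first part; neither follows from \eqref{asmp1} by itself. Both must be extracted from the structural hypotheses $\mathbf{B1}$--$\mathbf{B4}$ of \cite{Kapica2016}, which typically supply a drift (Lyapunov) inequality of the form $PV \le \lambda V + C$ with $\lambda \in (0,1)$ for a function $V$ dominating $L$; iterating this bound controls $\int L \, \mathrm d(P^k\nu)$ uniformly in $k$. Once this integrability is secured, the geometric mixing encoded in \eqref{asmp1} does the remaining work, and the decay of both the mean and the variance follows from the elementary estimates above.
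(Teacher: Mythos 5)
The paper offers no proof of this statement at all: it is a survey, and the theorem is quoted from \cite{Kapica2018} (under the hypotheses \textbf{B1}--\textbf{B4} of \cite{Kapica2016}) without any argument, so there is no in-paper proof to compare your attempt against. Judged on its own merits, your outline is the standard and correct route to a \emph{weak} LLN under geometric mixing: the duality estimate $\bigl|\int g\,\mathrm d(\mu_1-\mu_2)\bigr|\le \|g\|_{BL}\|\mu_1-\mu_2\|_{FM}$, the disintegration $P^n\nu=\int P^n\delta_x\,\nu(\mathrm dx)$ with convexity of the norm, and the covariance bound $|\mathbb E[Y_kY_{k+j}]|\le 2\|g\|_\infty\|g\|_{BL}\,q^j\,\mathbb E[L(X_k)]$ via conditioning on $X_k$ are all sound, and summing the geometric series does give a second moment of order $O(1/n)$, after which Chebyshev yields convergence in probability. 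You have also correctly isolated the one genuine gap: \eqref{asmp1} alone says nothing about $\int L\,\mathrm d\nu$ or about $\sup_k\int L\,\mathrm d(P^k\nu)$, and without these the off-diagonal bound is vacuous. Those moment bounds are precisely what the structural hypotheses \textbf{B1}--\textbf{B4} (which include a Lyapunov-type condition on the chain) are meant to supply, so your proof is conditional rather than complete; to close it you would have to actually derive $\sup_k\mathbb E[L(X_k)]<\infty$ from the drift inequality, e.g.\ by iterating $PV\le\lambda V+C$ and checking that $L$ is dominated by $V$. One further small caveat: your argument proves only the in-probability LLN stated here, which is all the statement asks for; it would not by itself give the almost-sure strengthenings that appear elsewhere in this literature, since the Borel--Cantelli step would require summability of the Chebyshev bounds, which $O(1/n)$ does not provide.
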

\cite{Horbacz2016(1)} demonstrates strong law of large number for random dynamical systems with randomly chosen jumps has been established, and they have shown their result holds for the IFS with state or place-dependent probabilities with a finite number of self-maps on a Polish space. For more results on the central limit on the Polish space theorem, see \cite{Kaijser1979, Kwiecinska2000, Marta1997}.

\subsection{Invariance Principle of IFS}\label{subsec:ifs-inv-prn}
Dynamical systems and probability theory are concerned with the limiting behaviour of successive observations of random variables. CLT examines deviations from the average behaviour of sums, for example, whereas the strong law of large numbers(SLLN) (ergodic theorem) specifies the average behaviour of sums. One extension of the previous two forms of the average behaviour of a sequence of random variables is the well-known notion of  \emph{almost sure invariance principle:}
\begin{definition}[\cite{Strassen1964, Strassen1967}]
Let $\{X_i\}_{i\ge 0}$ be  i.i.d random variables with finite $(2+\delta)$ moments, define $\mathcal S_{n}=\sum\limits_{i=0}^{n-1}X_i$. Then there exists a Brownian motion and a probability space $\Omega$ on which $B$ and $\mathcal S_n$ can be re-defined such that
$\mathcal S_n= B(n)+o(n^{\frac{1}{2}})$.
\end{definition}
This instantly enables one to infer a variety of standard statistical features for $\mathcal S_n$, such as the SLLN and different modifications of the CLT, assuming that they hold for the Brownian motion $B$. One such result is the following:
\begin{theorem}[\cite{Walkden2007}, Almost sure invariance principle]
Suppose $f: \mathcal X\to \mathbb R$ is a bounded Lipschitz function with $\nu(f)=0$ and $\sigma^2(f)=\sigma^2>0$. Fix $x\in \mathcal X$, then there exists a probability space $(\Omega, \mathcal F, \mathbb P)$ and a one-dimensional Brownian motion
$W: \Omega\to C(\mathbb R^{+}, \mathbb R)$ such that the random variable $\omega\mapsto W(\omega)(t)$ has variance $\sigma^2t$ and sequence of random variables $\phi_{n,x}: \Sigma\to \mathbb R, \psi_{n,x}: \Omega\to \mathbb R$ with the following properties:
\begin{itemize}
\item [(a)] for all $\epsilon>0$ we have,
\begin{align}
f^{n}(x, \cdot)=\phi_{n, x}(\cdot)+O\left(n^{\frac{1}{4}+\varepsilon}\right)\quad  \mu_x \text{ a.e };
\end{align}
\item[(b)] the sequence $\{\phi_{n,x}(\cdot)\}$ and $\{\psi_{n,x}(\cdot)\}$ are equal in distribution.
\item[(c)] for all $\epsilon>0$, we have,
\begin{align}
\psi_{n, x}(\cdot)=W(\cdot)(n)+O\left(n^{\frac{1}{4}+\varepsilon}\right) \quad \mathbb P \text{ a.e}.
\end{align}
\end{itemize}
\end{theorem}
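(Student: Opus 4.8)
The plan is to establish the invariance principle by the classical two-step route: first approximate the Birkhoff sums by a martingale obtained from a Poisson equation, and then embed that martingale into a Brownian motion by Skorokhod stopping times in the spirit of \cite{Strassen1964, Strassen1967}. Write $S_n(\sigma) := f^n(x,\sigma) = \sum_{k=0}^{n-1} f\big(F_k(x,\sigma)\big)$ for the Birkhoff sum along the random orbit generated by \eqref{eq:ch2-possible-sequ-maps}, where $\sigma$ is distributed according to $\mu_x$ from \eqref{eq:ch2-mu-on-sequ-maps} and $X_k := F_k(x,\sigma)$. Because $f$ is bounded and Lipschitz and the IFS contracts on average with Dini-continuous probabilities, the Markov operator $\mathrm P$ of \eqref{eq:ch2-markov-operator} has a spectral gap on the Lipschitz functions, which is exactly the mechanism behind the unique ergodicity asserted in the Barnsley theorem above. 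I would use this gap to solve the Poisson equation $h - \mathrm P h = f$ (recall $\nu^{\star}(f)=0$) and obtain the telescoping decomposition
\begin{align*}
S_n = M_n + h(X_0) - h(X_n), \qquad M_n := \sum_{k=1}^{n}\big(h(X_k) - (\mathrm P h)(X_{k-1})\big),
\end{align*}
where $M_n$ is a martingale for the natural filtration $\mathcal F_k = \sigma(\sigma_1,\dots,\sigma_k)$ and the remainder is uniformly bounded.

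First I would make the spectral-gap claim quantitative and check that $h$ is bounded. The geometric contraction of $\mathrm P^k$ in the Prohorov (equivalently Wasserstein) metric, of the type quoted above for \cite{Diaconis1999} and \cite{Jarner2000}, gives $\mathrm P^k f \to \nu^{\star}(f)=0$ at a geometric rate, so $h := \sum_{k\ge 0}\mathrm P^k f$ converges uniformly and solves $h-\mathrm P h = f$. The martingale differences $d_k := h(X_k) - (\mathrm P h)(X_{k-1})$ are then bounded, hence have moments of every order, and their conditional variances take the form $\mathbb E[d_k^2 \mid \mathcal F_{k-1}] = V(X_{k-1})$ for the fixed bounded function $V := \mathrm P(h^2) - (\mathrm P h)^2$. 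A standard computation (the Green--Kubo identity) identifies $\nu^{\star}(V)$ with the asymptotic variance $\sigma^2(f)=\sigma^2$, and the hypothesis $\sigma^2>0$ rules out the degenerate case in which $f$ is a coboundary.

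Next I would embed $(M_n)$ into Brownian motion. By the Skorokhod--Strassen embedding for martingales with bounded increments there exist, on an enlarged space $(\Omega,\mathcal F,\mathbb P)$, a Brownian motion $W$ with $\mathrm{Var}\,W(t)=\sigma^2 t$ and stopping times $0\le T_1\le T_2\le\cdots$ satisfying $\mathbb E[T_k - T_{k-1}\mid \mathcal F_{k-1}] = V(X_{k-1})/\sigma^2$, such that the process $n\mapsto W(T_n)$ has the same law as $n\mapsto M_n$. Setting $\phi_{n,x} := M_n$ on the sequence space and $\psi_{n,x} := W(T_n)$ on $\Omega$, property (b) holds by construction of the embedding, while property (a) follows immediately from $|S_n - M_n| = |h(X_0)-h(X_n)| \le 2\|h\|_{\infty} = O(1)$, which is a fortiori $O(n^{1/4+\varepsilon})$. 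For (c) the remaining task is to replace $W(T_n)$ by $W(n)$: since the conditional increments of $T$ average to $1$, one has $T_n/n \to 1$ almost surely, and $|W(T_n)-W(n)|$ is then controlled through the L\'evy modulus of continuity of $W$ together with a bound on $|T_n-n|$.

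The main obstacle will be extracting the precise exponent $n^{1/4+\varepsilon}$ in (c). Splitting $T_n - n = \big(T_n - \sigma^{-2}\sum_{k=0}^{n-1} V(X_k)\big) + \sigma^{-2}\big(\sum_{k=0}^{n-1} V(X_k) - n\sigma^2\big)$, the first bracket is a martingale with bounded increments and the second is the centred Birkhoff sum of the observable $V$. Exponential mixing from the spectral gap makes this Birkhoff sum obey a fluctuation bound $\sum_{k=0}^{n-1}\big(V(X_k)-\nu^{\star}(V)\big) = O(n^{1/2+\varepsilon})$ almost surely, and the martingale part admits the same control, so $|T_n-n| = O(n^{1/2+\varepsilon})$. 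Feeding this into the almost-sure $\tfrac12$-H\"older modulus of $W$ yields $|W(T_n)-W(n)| = O(n^{1/4+\varepsilon})$, which is exactly (c). Matching the two error sources---the time-change fluctuation and the Brownian modulus---so that the exponents combine to the stated rate is the delicate step; by contrast the martingale decomposition and the embedding are routine once the spectral gap has been secured.
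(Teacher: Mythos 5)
The paper itself offers no proof of this statement: it is quoted verbatim as a result of \cite{Walkden2007} in a survey section, so there is no in-paper argument to compare yours against. Judged on its own terms, your sketch follows the standard Philipp--Stout route (martingale approximation via the Poisson equation, then Skorokhod--Strassen embedding), which is essentially the strategy of the cited source and of the related Melbourne--Nicol / Field--Melbourne--T\"or\"ok literature, so the overall architecture is sound and the exponent bookkeeping at the end --- $|T_n-n|=O(n^{1/2+\varepsilon})$ fed into the Brownian modulus to get $O(n^{1/4+\varepsilon})$ --- is the correct and standard way the $\tfrac14$ arises.

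Two steps are asserted rather than established and would need real work. First, you take the Poisson solution $h=\sum_{k\ge 0}\mathrm P^k f$ to be \emph{bounded}, which is what makes the remainder $h(X_0)-h(X_n)$ an $O(1)$ term in (a) and keeps the martingale increments bounded for the embedding. The spectral gap you invoke lives on a Lipschitz (or H\"older) class, so what you actually get is that $h$ is Lipschitz; boundedness follows only if $\mathcal X$ is compact. On a noncompact Polish space you must instead control $\rho(X_n,x_0)$ via moment or drift conditions, and the increments $d_k$ are then only in $L^p$, which forces you to use the $L^p$ version of the Skorokhod embedding and changes the error analysis. You should either state compactness as a standing hypothesis or carry the Lipschitz-plus-moments version through. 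Second, the almost-sure fluctuation bound $\sum_{k<n}\bigl(V(X_k)-\nu^{\star}(V)\bigr)=O(n^{1/2+\varepsilon})$ does not follow from ``exponential mixing'' alone; decay of correlations gives variance bounds, not almost-sure rates. You need a separate argument --- e.g.\ apply the same Poisson-equation/martingale decomposition to $V$ and invoke the law of the iterated logarithm for martingales with bounded increments, or use a fourth-moment (G\'al--Koksma) estimate plus Borel--Cantelli along a subsequence. This is routine but it is a genuine missing step, and it is precisely the step on which the final exponent in (c) depends.
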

where by a.e we mean \emph{almost everywhere}.

\section{Geometry of the Unique Invariant Measure}\label{sec:geometry}

In the penultimate section of the survey, we consider structural results, which can be seen as the geometry of the unique invariant measure. While there is surely much more to be understood, the current results provide valuable insights within IFS.  

\begin{figure}[ht!]
\centering
\includegraphics[width=0.45\textwidth]{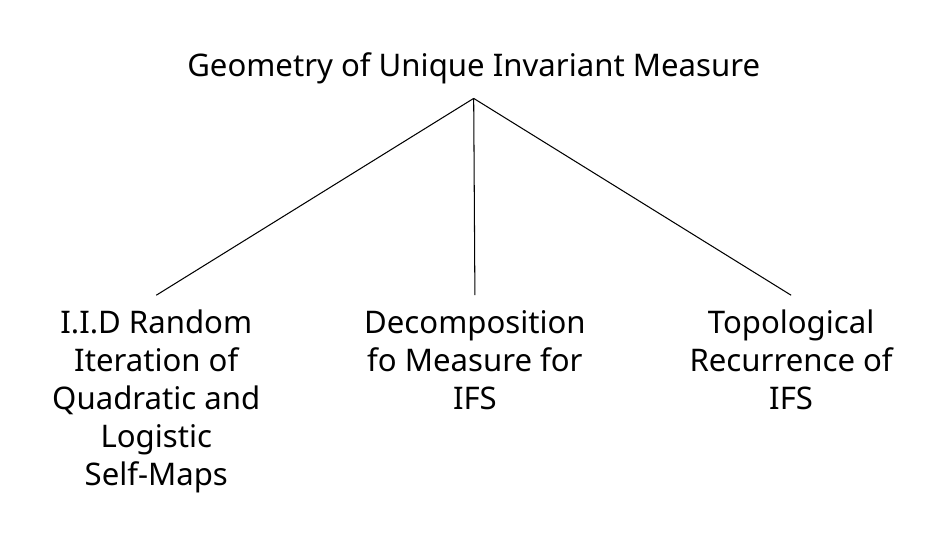}
\caption{Section \ref{sec:geometry} of the survey focuses on structural results, which can be seen as the geometry of the unique invariant measure.}
\label{fig:des-2}
\end{figure}

\subsection{A Special Case: I.I.D Random Iteration of Quadratic and Logistics Maps}\label{sec:IID-random-quad}
For a parameter $\sigma\in [0,4]$, consider the quadratic family of self-maps on $[0,1]$ as follows:
\begin{align}\label{dfn:ch2-logis}
\mathcal {O}=\{ w_{\sigma}(x):=\sigma x(1-x), 0\le \sigma\le 4, 0\le x\le 1\}.
\end{align}
And the realization scheme is as follows:
\begin{align}\label{eq:ch2-logistic-scheme}
X_{k+1}=w_{\sigma_k}\left(X_k\right), \quad k=0, 1,2,3,\dots; \sigma_1,\sigma_2,\dots \text{ are i.i.d} \sim [0,4];\quad \forall k\ge 1,  w_{\sigma_k} \in \mathcal O.
\end{align}
Given a pair of parameter values $(\alpha, \beta)$ with  $\alpha< \beta$, and a number $r\in (0,1)$, random dynamical systems generated by considering i.i.d sequence of maps $\{w_{k}: k\ge 1, k\in \mathbb N\}$ with $\mathbb P\left[w_1=w_{\alpha}\right]=r$ and $\mathbb P\left[w_1=w_{\beta}\right]=1-r$ from the set $\mathcal{O}$ and similar ones has been studied extensively in last seventy years, see \cite{Collet2009, Devaney2008}. This dynamical system family is one of the most influential, despite its apparent simplicity. For results on a few other chaotic families, see, e.g., \cite{May1976, Melo2012, Day1994}. In the economics literature of dynamic optimization model \eqref{dfn:ch2-logis} and \eqref{eq:ch2-logistic-scheme} arise significantly, see, e.g. \cite{Majumdar1994, Majumdar2000}.   In \cite{Bhattacharya1993} for specific values of $\alpha,  \beta$, uniqueness and other properties of the Markov chain arising from such random iteration have been studied. Several interesting results on the uniqueness and support of the invariant measure were obtained, primarily motivated by \cite{Dubins1966} on random monotone maps on an interval.
\begin{theorem}[\cite{Bhattacharya1993}]
\begin{itemize}
\item [(a)] If $0\le \alpha< \beta \le 1$, then $\delta_0$ is the unique invariant probability measure in $[0,1]$.
\item [(b)] If $1< \alpha< \beta \le 2$, then there exists an unique invariant non-atomic probability measure $\pi$ on $(0,1)$.
\item [(c)] If $2< \alpha< \beta \le 1+ \sqrt{5}$ and $\alpha\in [\frac{8}{\beta(4-\beta)},\beta)$ then there exists an unique invariant non-atomic probability measure $\pi$ on $(0,1)$.
\end{itemize}
\end{theorem}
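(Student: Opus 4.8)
The three cases share a common engine: in each regime the random orbit can be confined to a compact subinterval on which \emph{both} maps $w_\alpha,w_\beta$ are monotone, after which the splitting method for random iterations of monotone interval maps (\cite{Dubins1966}) delivers existence, uniqueness, and convergence. I would first record the elementary facts used throughout: $w_\sigma(0)=w_\sigma(1)=0$; $w_\sigma$ increases on $[0,\tfrac12]$ and decreases on $[\tfrac12,1]$ with maximum $w_\sigma(\tfrac12)=\sigma/4$; the nontrivial fixed point is $x_\sigma^\star=1-1/\sigma$ with multiplier $w_\sigma'(x_\sigma^\star)=2-\sigma$; and $w_\sigma(x)-x=x\bigl(\sigma(1-x)-1\bigr)$, which is negative for $x>x_\sigma^\star$ and positive for $0<x<x_\sigma^\star$. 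For part (a), $\sigma\le 1$ forces $w_\sigma(x)<x$ on $(0,1]$, hence $r\,w_\alpha(x)+(1-r)\,w_\beta(x)<x$ there; invariance of $\delta_0$ is clear from $w_\sigma(0)=0$, and for uniqueness I would test stationarity of a candidate $\pi$ against $g(x)=x$, obtaining $\int x\,\mathrm d\pi=\int\bigl(r\,w_\alpha+(1-r)\,w_\beta\bigr)\,\mathrm d\pi$, which the strict inequality renders incompatible with $\pi$ charging $(0,1]$, so $\pi=\delta_0$.

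For part (b), $1<\alpha<\beta\le2$ gives $x_\alpha^\star<x_\beta^\star\le\tfrac12$, so I would take $J=[x_\alpha^\star,x_\beta^\star]\subseteq(0,\tfrac12]$, on which both maps are strictly increasing. Invariance of $J$ follows from the sign of $w_\sigma(x)-x$: $w_\alpha$ fixes the left endpoint and pulls points to its right downward, while $w_\beta$ fixes the right endpoint and pushes points to its left upward. Because $|2-\sigma|<1$ each $x_\sigma^\star$ is attracting on $J$, so repeated application of $w_\alpha$ (resp.\ $w_\beta$) drives all of $J$ arbitrarily close to $x_\alpha^\star$ (resp.\ $x_\beta^\star$); choosing $z_0\in(x_\alpha^\star,x_\beta^\star)$ and a block length $n$, the events of $n$ consecutive $\alpha$'s and $n$ consecutive $\beta$'s have probabilities $r^n,(1-r)^n>0$ and push the whole interval below, resp.\ above, $z_0$. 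This is exactly the splitting hypothesis, and the monotone-maps theorem then yields a unique invariant $\pi$ supported in $J\subset(0,1)$.

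For part (c) the decisive observation is the algebraic identity $\sigma^2(4-\sigma)=8$ at $\sigma=1+\sqrt5$, so that the hypothesis $\alpha\ge 8/\bigl(\beta(4-\beta)\bigr)$ is equivalent to $m:=\alpha\beta(4-\beta)/16\ge\tfrac12$. I would then propose the trapping interval $[m,M]$ with $M=\beta/4$: one checks $M=\beta/4\le(1+\sqrt5)/4<1$, $m\ge\tfrac12$, and $m<M$ (the last reducing to $(\beta-2)^2\ge0$), while $w_\alpha(M)=m$, $w_\beta(M)\ge m$, and $w_\sigma(m)\le w_\sigma(\tfrac12)=\sigma/4\le M$, which together give $w_\sigma([m,M])\subseteq[m,M]$. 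Since $[m,M]\subseteq[\tfrac12,1]$, both maps are \emph{decreasing} there; I would pass to the increasing two-step maps $w_\sigma\circ w_\tau$ governing the subsequence $(X_{2k})$, verify the analogous splitting condition, and then transfer the conclusion back to the full chain.

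Non-atomicity of $\pi$ in (b) and (c) I would obtain by an atom-counting argument: if $c>0$ were the maximal atomic mass, carried at a point $p$, then injectivity of the strictly monotone maps makes $w_\alpha^{-1}(p)$ and $w_\beta^{-1}(p)$ single points which, by stationarity and maximality, must again carry mass $c$; since $w_\alpha,w_\beta$ have no common fixed point in the trapping interval, the resulting preimage tree never collapses, producing infinitely many atoms of mass $c$ and contradicting that $\pi$ is a probability measure. The step I expect to be the main obstacle is the splitting verification in (c): unlike the increasing case, the decreasing maps may acquire attracting $2$-cycles once $\sigma\in(3,1+\sqrt5)$, so driving the entire interval to a single extreme is no longer transparent, and it will require working with the two-step compositions together with a careful choice of the separating point $z_0$ and the block length.
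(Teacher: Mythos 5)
The survey does not prove this theorem; it is stated purely as a citation to \cite{Bhattacharya1993}, so there is no internal proof to compare against. Your outline does, however, follow the route of that source: trap the chain in a compact subinterval on which both maps are monotone, verify the Dubins--Freedman splitting condition \cite{Dubins1966}, and handle case (c) through the increasing two-step compositions. The computations you record are correct — in particular the identity $\sigma^2(4-\sigma)=8$ at $\sigma=1+\sqrt5$, the equivalence of $\alpha\ge 8/(\beta(4-\beta))$ with $m=w_\alpha(\beta/4)\ge\tfrac12$, and the forward invariance of $[m,M]$ with $M=\beta/4$ — and part (a) via the test function $g(x)=x$ is complete as written.

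Two genuine gaps remain. First, the theorem asserts uniqueness among invariant probability measures on $(0,1)$, whereas the splitting argument only gives uniqueness on the trapping interval ($J=[x_\alpha^\star,x_\beta^\star]$ in (b), $[m,M]$ in (c)). You need an absorption step: in (b), one step lands the chain in $(0,\tfrac12]$ because $w_\sigma(x)\le\sigma/4\le\tfrac12$, after which both maps move points monotonically toward $J$; in (c), while $X_k\le\tfrac12$ one has $X_{k+1}\ge(\alpha/2)X_k$ with $\alpha/2>1$, so the orbit exceeds $\tfrac12$ in finitely many steps and then falls into $[m,M]$ in one more step. Hence for any invariant $\pi$ on $(0,1)$ and any $\epsilon>0$, $\pi(J_\epsilon)=\int P^n(x,J_\epsilon)\,\pi(\mathrm dx)\to 1$, and letting $\epsilon\downarrow0$ gives $\pi(J)=1$; only then does uniqueness on $J$ imply uniqueness on $(0,1)$. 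Second, the splitting verification in case (c) is precisely the technical heart of \cite{Bhattacharya1993} and you leave it open; without it the conclusion in the range $\beta\in(3,1+\sqrt5\,]$, where attracting $2$-cycles appear, is not established. A smaller point: in the non-atomicity argument, the assertion that ``the preimage tree never collapses'' needs justification. If the set $A$ of atoms of maximal mass were finite, the preimage maps would be injections of $A$ into itself, hence bijections, so $A$ would be invariant under both $w_\alpha$ and $w_\beta$; in the increasing case $\min A$ and $\max A$ would then have to be the distinct fixed points $x_\alpha^\star$ and $x_\beta^\star$, and the strictly decreasing $w_\alpha$-orbit of $x_\beta^\star$ would produce infinitely many distinct points of $A$ — that is the contradiction you want, and it should be spelled out (and adapted to the two-step maps in case (c)).
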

Much is known about the behaviour of the dynamical system generated by the self-maps for a fixed $\sigma\in [0,4]$, defined in the set \eqref{dfn:ch2-logis} which are also known as logistics maps in the literature of dynamical systems and chaos, see \cite{Graczyk1997, Arrowsmith1990, Klebaner1997}. 

In \cite{Athreya2000}, the following recursive relation 
\begin{align}\label{eq:ch2-logistic-ext}
X_{k+1}= \sigma_{k+1} X_k (1-X_k) \quad k=0, 1,2,3,\dots.  
\end{align}
has been studied with great depth and importance, and necessary and sufficient conditions have been established for the existence and uniqueness of non-trivial invariant probability measure for \eqref{eq:ch2-logistic-ext}, in \cite{Steinsaltz2001, Athreya2001, Athreya2003(2), Bhattacharya1999, Bhattacharya2001(1), Dai2000}.

Consider a slightly different version of the chain \eqref{eq:ch2-logistic-ext} as follows:
\begin{align}\label{eq:ch2-logistic-variant}
X_{k+1}= \sigma_{k+1} X_k w_{k+1}(X_k) \quad k=0, 1,2,3,\dots,  
\end{align}
under some assumptions on $\sigma_k$ and $w_k$ for $k=0,1,2,\dots$, \cite[Section 4]{Athreya2003(1)}  investigate conditions for the existence and uniqueness of non-trivial invariant probability measures and their support for \eqref{eq:ch2-logistic-variant}. This class of maps has been widely utilized in the research on ecology and economics to simulate population and growth models. See \cite{Majumdar2009} for additional expansion and related results on the random iteration of monotone maps. In \cite{Bhattacharya2001(2)}, a convergence rate in  Kolmogorov distance of the chain defined in \eqref{dfn:ch2-ifs-diaconis} has shown under some mild assumption on the chain. The utility of these specific models of IFS has been extensively documented in the literature of dynamic economics see, e.g., \cite{Bhattacharya2009, Tong90}, and economic growth and survival under uncertainty, see \cite{Majumdar1989, Majumdar1992, Stokey1989, Tong90, Yahav1975}. In \cite{Bhattacharya2004}, positive Harris recurrence and the existence and uniqueness of invariant probability measure have been established for the chain \eqref{eq:ch2-logistic-scheme}. A significant fact to notice that the state-space of $\{X_k\}_{k\ge 0}$ is $(0,1)$, since the domain of the logistic maps was restricted in $(0,1)$ to avoid the trivial invariant probability measure $\delta_{\{0\}}$. For the motivation and significance of this important Markov model, with applications to problems of \emph{optimization under uncertainty} arising in economics, see \cite{Majumdar2000, Majumdar1994, Bala1992, Mitra1998, Anantharam1997}.

\subsection{Decomposition of Measure for IFS}\label{sec:IFS-decom-meas}
\begin{definition}
A probability measure $\nu_1$ is \emph{absolutely continuous with respect to another
probability measure} $\nu_2$ if
\begin{align}
\nu_2(\mathcal A) = 0 \Rightarrow \nu_1(\mathcal A) = 0 \quad \text{ for all } \mathcal A\in \mathcal B(\mathcal X)    
\end{align}
i.e. if every null set of $\nu_2$ is also a null set of $\nu_1$.
Two probability measures are equivalent if they are mutually absolutely continuous or have the same null sets.
\end{definition}
The following theorem is adapted from \cite{Dubins1966}, and it is noted in \cite{Barnsley1988(2), Barnsley1988(2)erratum} that it holds for a large class of IFS. Consider $\mathcal{M}_{f}$ be the set of all finite non-negative Borel measures on $\mathcal X$, if any $\nu\in \mathcal M_f$ can be uniquely decomposed into sum of two measures $\nu_1$ and $\nu_2$, i.e, $\nu=\nu_1+\nu_2$ then 
we express $\mathcal M_{f}= \mathcal M_{f}^1 \oplus \mathcal M_{f}^2$.
\begin{theorem}\label{thm:ch2-ext-dubin-freed-on-polish}
(Extension of Dubins and Freedman's result to Polish space)
Let $\mathcal M_{f}$ denote the set of all of the non-negative finite measures on some polish space $\mathcal X$, let $T$ be a map on  $\mathcal M_{f}= \mathcal M_{f}^1 \oplus \mathcal M_{f}^2$  which satisfies the following:
\begin{itemize}
\item[a)]$T(\nu_1+\nu_2)= T\nu_1+ T\nu_2 \quad \forall \nu_1, \nu_2\in \mathcal M_f$.
\item[b)] $T\mathcal M_f^1\subseteq \mathcal M_f^1$.
\item[c)] $(T\nu)\left(\mathcal X\right)=\nu\left(\mathcal X\right)\quad \forall \nu\in \mathcal M_f$.
\end{itemize}
Then if $\nu$ is a fixed point of $T$ i.e $T\nu=\nu$ with the fact that $\nu=\nu_1+\nu_2$ for some unique $\nu_1\in \mathcal M_f^1$ $\nu_2\in \mathcal M_f^2$ then $\nu_1$ and $\nu_2$ are also fixed point of $T$ i.e $T\nu_1=\nu_1$ and $T\nu_2=\nu_2$. 
\end{theorem}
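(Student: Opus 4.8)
The plan is to track the two components of every measure under $T$, using additivity (a) together with the uniqueness of the direct-sum decomposition to reduce the assertion to the vanishing of a single ``cross term,'' and then to kill that term using mass preservation (c) combined with the non-negativity built into $\mathcal M_f$.

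First I would feed the fixed-point identity through (a):
\begin{align*}
\nu_1 + \nu_2 = \nu = T\nu = T(\nu_1 + \nu_2) = T\nu_1 + T\nu_2.
\end{align*}
Hypothesis (b) places $T\nu_1 \in \mathcal M_f^1$, whereas $T\nu_2$ has its own unique decomposition $T\nu_2 = b_1 + b_2$ with $b_1 \in \mathcal M_f^1$ and $b_2 \in \mathcal M_f^2$. Regrouping,
\begin{align*}
\nu = \bigl(T\nu_1 + b_1\bigr) + b_2,
\end{align*}
which writes $\nu$ as an $\mathcal M_f^1$-measure plus an $\mathcal M_f^2$-measure. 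Matching this against the given decomposition $\nu = \nu_1 + \nu_2$ and invoking uniqueness of the splitting $\mathcal M_f = \mathcal M_f^1 \oplus \mathcal M_f^2$ yields
\begin{align*}
\nu_1 = T\nu_1 + b_1, \qquad \nu_2 = b_2.
\end{align*}
At this point the entire statement collapses to proving $b_1 = 0$.

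To close the gap I would pass to total masses. Applying (c) to $\nu_1$ and reading off the identity above,
\begin{align*}
\nu_1(\mathcal X) = (T\nu_1)(\mathcal X) + b_1(\mathcal X) = \nu_1(\mathcal X) + b_1(\mathcal X),
\end{align*}
so $b_1(\mathcal X) = 0$. Since $b_1 \in \mathcal M_f^1 \subseteq \mathcal M_f$ is a non-negative finite measure, $b_1(\mathcal X) = 0$ forces $b_1$ to be the zero measure. Substituting back gives $\nu_1 = T\nu_1$ and $\nu_2 = b_2 = T\nu_2$, which is the claim.

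The step I expect to be the genuine crux — and the one that would break under any weakening of the hypotheses — is the final one, which rests essentially on both mass preservation (c) and positivity. Conditions (a) and (b) alone only deliver the inequality $\nu_1 \ge T\nu_1$ (because $b_1 \ge 0$); it is precisely (c), paired with non-negativity, that upgrades this to the equality $\nu_1 = T\nu_1$ by forbidding any leakage of mass from the $\mathcal M_f^2$-component $\nu_2$ into the $\mathcal M_f^1$-component under $T$. Were signed measures allowed, $b_1(\mathcal X) = 0$ would no longer force $b_1 = 0$, and the argument would fail.
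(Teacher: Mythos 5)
Your proof is correct and follows essentially the same route as the paper's: decompose $T\nu_2$ into its two components, invoke uniqueness of the direct-sum splitting to identify $\nu_1 = T\nu_1 + b_1$ and $\nu_2 = b_2$, and then use mass preservation (c) together with non-negativity to force the cross term $b_1$ to vanish. Your version is in fact slightly cleaner in that it correctly attributes the identification step to uniqueness of the decomposition alone (the paper's proof nominally cites condition (c) there as well), and your closing remark about where positivity is indispensable is accurate.
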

\begin{proof}
Fix a $\nu\in \mathcal M_f$ such that $T\nu=\nu$, one can write  $\nu=\nu_1+\nu_2$ and since $T$ is linear as a map, we have by condition $a)$ of the statement
\begin{align}\label{eq:ch2-from cond-a}
T\nu= T(\nu_1+\nu_2)= T\nu_1+ T\nu_2. 
\end{align}
Now, we use the condition $b)$ of the statement, and we get
\begin{align}\label{eq:ch2-from-cond-b}   
T\nu_1\in \mathcal M_f^1. 
\end{align} 
Now, for $T\nu_2$ we get unique $\mu_1\in \mathcal M_f^1, \mu_2\in \mathcal M_f^2$ such that
\begin{align}\label{eq:ch2-from-cond-uniq}
T\nu_2=\mu_1+\mu_2 
\end{align}
So \eqref{eq:ch2-from cond-a} can be re-written in the light of \eqref{eq:ch2-from-cond-uniq}
\begin{align}\label{eq:ch2-re-from cond-a}
T\nu= T(\nu_1+\nu_2)=  T\nu_1+ T\nu_2= T\nu_1+ \mu_1+ \mu_2= (T\nu_1+\mu_1)+ \mu_2, 
\end{align}
where $(T\nu_1+\mu_1)\in \mathcal M_f^1 \text{ and }   \mu_2\in \mathcal M_f^2$. Since the decomposition is unique and using the condition $c)$ of the statement, one can write
\begin{align}
T\nu_1+\mu_1=\nu_1.    
\end{align}
Now, applying the condition $c)$ again one get
\begin{align}
(T\nu_1+\mu_1)(\mathcal X)=(T\nu_1)(\mathcal X)+\mu_1(\mathcal X)=\nu_1(\mathcal X)+\mu_1(\mathcal X).  
\end{align}
But we know $(T\nu_1)(\mathcal X)=\nu_1(\mathcal X)$, thus we have $\mu_1(\mathcal X)=0$, which is to say $\mu_1\equiv 0$.
Hence, 
\begin{align*}
T\nu_1=\nu_1\\
T\nu_2=\nu_2
\end{align*}
\end{proof}

Since excluding singular measures with respect to the Lebesgue measure would suffice to prove ergodicity, we give a necessary and sufficient condition for this, in the case of weak Feller Markov chains on a Polish state-space which is a straightforward generalization of \cite[Theorem 3]{Niclas2005(2)}.
\begin{theorem}\label{thm:cond-sing-inv-msr}
Let $X_k(x)$ be a \emph{weak-Feller Markov chain on a complete separable metric space} $\mathcal X$. There is an invariant measure $\nu$ with a positive singular part if and only if there exists a starting point $x_0\in \mathcal X$ and a compact subset $\mathcal K\subseteq \mathcal X$ with zero Lebesgue measure such that for all $\epsilon>0$:
\begin{align}\label{eq:ch2-cond-for-decom}
\lim_{k\to \infty} \frac{1}{k} \sum\limits_{i=0}^{k-1} \mathbb P\left( X_i(x_0)\in \mathcal K\right)>1-\epsilon.
\end{align}
\end{theorem}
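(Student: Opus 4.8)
The plan is to prove the two implications separately, using the averaged occupation measures $\eta_k(x_0,\cdot):=\frac1k\sum_{i=0}^{k-1}\nu^i(x_0,\cdot)$, which are probability measures on $\mathcal X$, as the bridge between the pathwise occupation statistics on the left and invariant measures on the right.

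For the ``if'' direction I would run a Krylov--Bogolyubov argument adapted to the weak-Feller hypothesis. The assumption that $\frac1k\sum_{i=0}^{k-1}\mathbb P(X_i(x_0)\in\mathcal K)$ exceeds $1-\epsilon$ in the limit says precisely $\eta_k(x_0,\mathcal K)\gtrsim 1$, and since $\mathcal K$ is compact this makes the family $\{\eta_k(x_0,\cdot)\}_k$ tight; by Prohorov's theorem some subsequence $\eta_{k_j}(x_0,\cdot)$ converges weakly to a probability measure $\nu$. Using $\int f\,\mathrm d(\mathrm P^{\star}\eta_k)=\int (\mathrm Pf)\,\mathrm d\eta_k$ together with the telescoping identity $\mathrm P^{\star}\eta_k=\eta_k+\frac1k(\nu^k(x_0,\cdot)-\delta_{x_0})$, and the fact that $\mathrm Pf\in C_b(\mathcal X)$ whenever $f\in C_b(\mathcal X)$ by the weak-Feller property (Definition~\ref{dfn:ch2-weak-flr-prop}), I would pass to the limit along $k_j$ to obtain $\int f\,\mathrm d(\mathrm P^{\star}\nu)=\int f\,\mathrm d\nu$ for all $f\in C_b(\mathcal X)$, hence $\mathrm P^{\star}\nu=\nu$. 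Finally, since $\mathcal K$ is closed, the Portmanteau theorem gives $\nu(\mathcal K)\ge\limsup_j\eta_{k_j}(x_0,\mathcal K)\ge 1-\epsilon$; as $\mathrm{Leb}(\mathcal K)=0$ and every absolutely continuous measure vanishes on $\mathcal K$, the measure $\nu$ has singular part of mass at least $1-\epsilon>0$.

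For the ``only if'' direction I start from an invariant $\nu$ with Lebesgue decomposition $\nu=\nu_{\mathrm{ac}}+\nu_s$, $\nu_s\neq 0$. The first step is to promote $\nu_s$ itself to an invariant measure: I would apply the extension of the Dubins--Freedman decomposition (Theorem~\ref{thm:ch2-ext-dubin-freed-on-polish}) to $T=\mathrm P^{\star}$ on $\mathcal M_f=\mathcal M_f^{1}\oplus\mathcal M_f^{2}$ (absolutely continuous $\oplus$ singular), which yields that $\nu_{\mathrm{ac}}$ and $\nu_s$ are each fixed points of $\mathrm P^{\star}$. Normalising, $\tilde\nu_s:=\nu_s/\|\nu_s\|$ is an invariant probability measure concentrated on a Borel set $B$ with $\mathrm{Leb}(B)=0$. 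By inner regularity of finite Borel measures on a Polish space, for each $\epsilon>0$ I choose a compact $\mathcal K\subseteq B$ with $\tilde\nu_s(\mathcal K)>1-\epsilon$ and $\mathrm{Leb}(\mathcal K)=0$. Running the chain from the stationary law $\tilde\nu_s$ and applying Birkhoff's ergodic theorem to the shift on path space gives $\frac1k\sum_{i=0}^{k-1}\mathbf 1_{\mathcal K}(X_i)\to \hat f_{\mathcal K}$ almost surely and in $L^1(\mathbb P_{\tilde\nu_s})$, where $\hat f_{\mathcal K}$ is the conditional expectation of $\mathbf 1_{\mathcal K}$ given the invariant $\sigma$-field and $\int \hat f_{\mathcal K}\,\mathrm d\tilde\nu_s=\tilde\nu_s(\mathcal K)>1-\epsilon$. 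Since $0\le \hat f_{\mathcal K}\le 1$, a Markov-inequality estimate shows $\tilde\nu_s(\{\hat f_{\mathcal K}>1-\sqrt\epsilon\})>0$, so I may fix a starting point $x_0$ in this set; dominated convergence then converts the pathwise average into $\frac1k\sum_{i=0}^{k-1}\mathbb P(X_i(x_0)\in\mathcal K)\to \hat f_{\mathcal K}(x_0)>1-\sqrt\epsilon$, which is the required condition.

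The main obstacle is the first step of necessity: guaranteeing that the singular part $\nu_s$ is itself invariant. This is exactly hypothesis (b) of Theorem~\ref{thm:ch2-ext-dubin-freed-on-polish}, namely that $\mathrm P^{\star}$ maps absolutely continuous measures to absolutely continuous measures --- equivalently, for the IFS models here, that the maps $w_i$ are non-singular so that each pushforward $\nu\circ w_i^{-1}$ preserves absolute continuity. Without this the Lebesgue decomposition need not be respected by the dynamics, an invariant measure with positive singular part need not contain a purely singular invariant (or ergodic) component, and the pathwise occupation of any zero-Lebesgue set could stay bounded away from $1$; I would therefore make non-singularity of the dual operator an explicit standing assumption. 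A secondary, cosmetic point is the reading of the quantifiers: the carrier $B$ of $\tilde\nu_s$ is in general not compact, so a single compact $\mathcal K$ need not absorb full mass, and the clean statement is that for every $\epsilon>0$ one can find $x_0$ and a compact $\mathcal K$ of zero Lebesgue measure with $\liminf_k \frac1k\sum_{i=0}^{k-1}\mathbb P(X_i(x_0)\in\mathcal K)>1-\epsilon$; I would phrase the conclusion in that form.
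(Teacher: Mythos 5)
Your sufficiency argument is the same Krylov--Bogolyubov route the paper takes -- Ces\`aro occupation measures started from $\delta_{x_0}$, a weakly convergent subsequence, and Portmanteau on the closed set $\mathcal K$ -- but you actually complete it: the paper never verifies that the weak limit $\widehat\nu$ is invariant, whereas your telescoping identity $\mathrm P^{\star}\eta_k=\eta_k+\tfrac1k(\nu^k(x_0,\cdot)-\delta_{x_0})$ combined with $\mathrm Pf\in C_b(\mathcal X)$ is exactly the missing step, and your tightness claim is justified because the hypothesis (quantified over all $\epsilon$) forces $\eta_k(\mathcal K)\to 1$ for the single compact $\mathcal K$. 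On the necessity side you diverge from the paper in a way that is an improvement: the paper simply posits a compact Lebesgue-null $\mathcal K$ with $\nu(\mathcal K)>1-\epsilon$ and then cites Yosida's mean ergodic theorem without ever selecting a starting point $x_0$, while you first isolate the singular part, make it invariant via Theorem~\ref{thm:ch2-ext-dubin-freed-on-polish}, use inner regularity to get the compact carrier, and then use Birkhoff plus a Markov-inequality selection of $x_0$. That is the argument the paper should have written.

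The obstacle you flag is genuine and not merely cosmetic, and it is worth recording that without it the necessity direction is literally false. Take $\mathcal X=[0,1]$ and the weak-Feller chain that, from any state, jumps to a uniform point with probability $\tfrac12$ and to $0$ with probability $\tfrac12$; then $\nu=\tfrac12\,\mathrm{Leb}+\tfrac12\,\delta_0$ is invariant with positive singular part, yet for every starting point and every Lebesgue-null compact $\mathcal K$ the Ces\`aro averages of $\mathbb P(X_i(x_0)\in\mathcal K)$ converge to $\nu(\mathcal K)\le\tfrac12$, so the condition ``$>1-\epsilon$ for all $\epsilon$'' cannot hold. (Here $\mathrm P^{\star}$ sends absolutely continuous measures to measures with an atom, so hypothesis (b) of Theorem~\ref{thm:ch2-ext-dubin-freed-on-polish} fails, exactly as your diagnosis predicts.) So either the non-singularity of the dual operator must be a standing assumption, as you propose, or the theorem must be read with ``positive singular part'' replaced by ``purely singular'' (which is in fact the setting of the cited source). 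Your two remaining caveats are also correct and harmless: the quantifiers should let $\mathcal K$ and $x_0$ depend on $\epsilon$, and the conclusion should be stated with a $\liminf$ since the existence of the limit is not guaranteed for an arbitrary weak-Feller chain; and, like the paper, you are implicitly assuming $\mathcal X$ carries a distinguished reference (Lebesgue) measure, i.e.\ $\mathcal X\subseteq\mathbb R^n$, which should be said once.
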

\begin{proof}
Let
\begin{align}
\nu_{0}=\delta_{x_{0}}, \quad \nu_{k}=\frac{1}{k} \sum_{i=0}^{k-1} T^{\star i} \nu_{0}  \end{align}
Then \eqref{eq:ch2-cond-for-decom} gives 
\begin{align}
\lim_{k\to \infty}\nu_k\left(\mathcal K\right) >0
\end{align}
Now choose a weakly convergent sub-sequence $k_i$ such that for some $\widehat{\nu}$, 
\begin{align}
\lim_{i\to \infty}\nu_{k_{i}} \rightarrow \widehat{\nu}\quad\text{ weakly }
\end{align}
Finally due to Portmanteau theorem \cite{Billingsley2013}, 
\begin{align}
\widehat{\nu}(\mathcal K) \geq \limsup _{i \rightarrow \infty} \nu_{k_{i}}(\mathcal K)>0.
\end{align}
Now, let $\mathcal K$ be a compact set of Lebesgue measure zero in $\mathcal{ X}$ such that for some small $\epsilon>0$
\begin{align*}
\nu(\mathcal K )> 1-\epsilon,
\end{align*}
where $\nu$ has a positive singular part. The existence of such a set can be shown as follows: Let $\mu_n$ be the Lebesgue measure on $\mathbb R^n$. If we consider singular measures on $\mathcal X=\mathbb R^n$, then for $n=1$, we can take Cantor measure \cite{Falconer1985, Mattila1995} $\nu$, that is a singular probability measure whose support is the Cantor set $C$. It is known that $C$ is compact, $\mu_1(C)=0$, but $\nu(C) = 1$. For $n>1$, let $x\in \mathbb R^n$ be a generic non-zero vector. Denote $L=\{cx: c\in \mathbb R\}$, be the line generated by $x$, and notice that it is a copy of $\mathbb R$, with $\mu_n(L) =0$. 
Consider the singular measure $\nu$ that behaves like $\mu_1$ on $L$, meaning that
$$\nu(V) = \mu_1(V\cap L)$$ 
for every measurable set $V\subset \mathbb R^n$. This is a singular measure. Since $L$ is closed in $\mathbb R^n$, every compact $\mathcal K\subset L$ is also a compact for $\mathbb R^n$. One can easily find a compact (for example, $[0,1]$) inside $L$ such that $$\mu(\mathcal K) >0,\qquad \mu_k(\mathcal K)\le \mu_k(L)=0.$$
Now, from \cite[Theorem 6 (ii)]{Yosida1995}, for all $w\in \mathcal L^1(\nu)$
\begin{align}\label{ergodic}
\lim _{k \rightarrow \infty} \frac{1}{k} \sum_{i=0}^{k-1} \mathbb{E}\left[ w\left(X_{k}(x)\right)\right]=w^{\star}(x)    
\end{align}
exists $\nu$-almost everywhere, moreover 
\begin{align}
\int_{\mathcal X} w \mathrm{d}\nu= \int_{\mathcal X} w^* \mathrm{d}\nu.
\end{align}
\end{proof}
For more on the decomposition of measures related to Markov operators, we refer to \cite[Chapter 5 - Chapter 8]{Worm2010}, \cite{Zaharopol2005, Zaharopol2009}, and \cite{Buzzi2000} for the Lasota-York type map.

\subsection{Topological aspects of IFS}\label{sec:IFS-Topo-Rec}
\begin{definition}(\cite{Skorokhod1987}\label{df:ch2-topo-recu}
A discrete-time Markov chain which start from $X_0=x\in \mathcal X$ denoted by $\{X_k(x)\}$ on a metric space $(\mathcal X, \rho)$ is said to be \emph{topologically recurrent}, if for any open set $\mathcal U$ in $\mathcal X$ and any starting point $x\in \mathcal X$, we have,
\begin{align}\label{eq:ch2-topo-recu}
\mathbb P\left (\sum\limits_{n=1}^{\infty} \boldsymbol{1}_{\mathcal U}\left(X_k(x)\right)=+\infty\right)=1, 
\end{align}
where $\boldsymbol{1}_{\mathcal U}\left(X_k(x)\right)$ is defined as follows:
\[ \boldsymbol{1}_{\mathcal U}\left(X_k(x)\right) := \begin{cases} 
1 & \text{ if } X_k\in \mathcal U. \\
0 & \text{ if } X_k\in  \mathcal U^c=\left(\mathcal X\setminus \mathcal U\right).
\end{cases}
\]
\end{definition}
The expression $\sum\limits_{n=1}^{+\infty}\boldsymbol{1}_{\mathcal U}\left(X_k(x)\right)$ is infinite i.e diverges iff the set $\{k\mid X_k\in \mathcal U\}$ is infinite. Therefore, the notion of topological recurrence of a Markov chain means that for each starting point $x\in \mathcal X$ and for each open set $\mathcal U$ in $\mathcal X$, the Markov chain $X_k$ will visit the open set $\mathcal U$ an infinite number of times. 

\begin{lemma}(\cite{Skorokhod1987}
Suppose $\{X_k\}$ is a \emph{Feller Markov chain} in a compact metric space $(\mathcal X,\rho)$, then $\{X_k\}$ is \emph{topologically recurrent} if and only if
\begin{align}
\mathbb P\left(X_k(x)\in \mathcal U\right)>0 \text{ for any open set }  \mathcal U  \text{ in } \mathcal X.    
\end{align}
\end{lemma}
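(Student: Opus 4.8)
The plan is to prove both implications, with essentially all the work in the ``$\Leftarrow$'' direction. Throughout, I read the stated condition as: for every nonempty open $\mathcal U\subseteq\mathcal X$ and every starting point $x$, the chain reaches $\mathcal U$ with positive probability in finite time, i.e.\ $h(x):=\mathbb P_x(\tau_{\mathcal U}<\infty)>0$, where $\tau_{\mathcal U}:=\inf\{n\ge 1: X_n\in\mathcal U\}$. The forward implication is immediate: if $\{X_k\}$ is topologically recurrent then, by \eqref{eq:ch2-topo-recu}, the chain visits $\mathcal U$ infinitely often $\mathbb P_x$-a.s., so in particular it visits $\mathcal U$ at least once a.s., whence $h(x)=1>0$.

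For the converse I would proceed in three steps. First, upgrade the pointwise positivity of $h$ to a \emph{uniform} lower bound. Write $h=\sup_N h_N$ with $h_N(x)=\mathbb P_x\big(X_n\in\mathcal U\text{ for some }1\le n\le N\big)$. The event $\{(X_1,\dots,X_N)\in O_N\}$, with $O_N=\{(x_1,\dots,x_N): x_n\in\mathcal U\text{ for some }n\}$ open in $\mathcal X^N$, has $\mathbb P_x$-probability equal to $h_N(x)$; since the Feller property makes $x\mapsto\mathbb E_x[f(X_1,\dots,X_N)]$ continuous for $f\in C_b(\mathcal X^N)$ (iterate the one-step kernel, using that $\mathcal X$ compact gives $C_b=C$), the Portmanteau theorem shows each $h_N$ is lower semicontinuous. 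A supremum of lower semicontinuous functions is again lower semicontinuous, so $h$ is l.s.c.; being strictly positive on the compact space $\mathcal X$, it attains a positive minimum, giving $\delta:=\inf_{x\in\mathcal X}h(x)>0$.

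Second, boost $\delta>0$ to $h\equiv 1$. Set $g(x):=\mathbb P_x(\tau_{\mathcal U}=\infty)=1-h(x)$, so that $M:=\sup_x g(x)=1-\delta<1$. Conditioning on $X_m$ via the Markov property yields the telescoping identity
\begin{align*}
g(x)=\mathbb E_x\big[\mathbf 1\{X_1\notin\mathcal U,\dots,X_m\notin\mathcal U\}\,g(X_m)\big]\le M\,\mathbb P_x(\tau_{\mathcal U}>m)
\end{align*}
for every $m$; letting $m\to\infty$ and using $\mathbb P_x(\tau_{\mathcal U}>m)\downarrow g(x)$ gives $g(x)\le M\,g(x)$, and since $M<1$ this forces $g\equiv 0$, i.e.\ $\mathbb P_x(\tau_{\mathcal U}<\infty)=1$ for every $x$. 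Third, iterate: defining successive visit times $\tau_1<\tau_2<\cdots$ to $\mathcal U$ and applying the strong Markov property, $\mathbb P_x(\tau_{j+1}<\infty\mid\mathcal F_{\tau_j})=\mathbb P_{X_{\tau_j}}(\tau_{\mathcal U}<\infty)=1$ on $\{\tau_j<\infty\}$, so by induction all $\tau_j$ are finite a.s. Hence $\sum_{n\ge1}\mathbf 1_{\mathcal U}(X_k(x))=\infty$ holds $\mathbb P_x$-a.s.\ for every $x$ and every open $\mathcal U$, which is exactly \eqref{eq:ch2-topo-recu}.

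The main obstacle is the first step: converting ``from each $x$ the chain eventually hits $\mathcal U$ with some positive, but possibly tiny and $x$-dependent, probability'' into a single uniform bound $\delta>0$. This is precisely where both hypotheses are indispensable: the Feller property supplies lower semicontinuity of the hitting probabilities through Portmanteau, and compactness turns pointwise strict positivity into a strictly positive infimum. Once $\delta>0$ is secured, the passage to $h\equiv1$ and then to infinitely many visits is routine. A minor point to verify carefully is the continuity of the finite-dimensional laws $x\mapsto\mathrm{Law}_x(X_1,\dots,X_N)$, which I would establish by induction on $N$ using that the one-step kernel leaves $C(\mathcal X)$ invariant.
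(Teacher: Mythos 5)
The paper does not actually prove this lemma: it is stated with only a citation to \cite{Skorokhod1987}, and the text moves straight on to the next definition. So there is no in-paper argument to compare against; what you have written is a self-contained proof, and as far as I can check it is correct. Your reading of the (loosely quantified) right-hand condition as ``for every starting point $x$ and every nonempty open $\mathcal U$ the hitting probability $h(x)=\mathbb P_x(\tau_{\mathcal U}<\infty)$ is positive'' is the only sensible one, and the forward implication is indeed trivial. The converse is organised around exactly the right pivot: lower semicontinuity of $h=\sup_N h_N$ via the Feller property and the Portmanteau inequality for the open sets $O_N\subseteq\mathcal X^N$, compactness to turn pointwise positivity into a uniform bound $\delta>0$, the Markov-property identity $g(x)=\mathbb E_x[\mathbf 1\{\tau_{\mathcal U}>m\}\,g(X_m)]\le M\,\mathbb P_x(\tau_{\mathcal U}>m)$ with $M=1-\delta<1$ to force $g\equiv 0$, and the strong Markov property to bootstrap one a.s.\ visit into infinitely many, which is \eqref{eq:ch2-topo-recu}. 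The one step you correctly flag as needing care --- weak continuity of $x\mapsto\mathrm{Law}_x(X_1,\dots,X_N)$ --- does follow by induction from invariance of $C(\mathcal X)$ under the kernel (on a compact space one can reduce to finite sums of products $f_1(x_1)\cdots f_N(x_N)$ by Stone--Weierstrass), so the argument closes. Two cosmetic remarks: the lemma should say \emph{nonempty} open set, and it is worth stating explicitly that an l.s.c.\ function on a compact space attains its infimum, which is where $\delta>0$ comes from.
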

\begin{definition}\cite[Definition 2.4]{Usachev1972}\label{df:ch2-stoch-close}
Suppose $X_n$ is a Markov chain with transitional probability kernel $\nu(x, \cdot)$, then a non-empty measurable set $B$ in $\mathcal X$ is called stochastically closed iff $\nu(x, B)=1\; \forall x\in B$.
\end{definition}
In other words, stochastically closed means that with probability $1$, the Markov chain stays in the set once it steps into that set, so this is an analogue of absorbing set and stochastic version of forwarding invariance. No more than one topologically and stochastically closed set exists if a Markov chain is topologically recurrent. For an IFS, topological recurrence does not depend on the fact whether the system is contracting or expanding \cite[Remark 2]{Niclas2005(2)}. 
For topological recurrence and its relation with ergodicity of homogeneous Markov chain on locally compact and compact metric space, we refer to articles by \cite{Skorokhod1987, Skorokhod2007, Harris1956, Blank2018}. In \cite{Samuel2017, Mihail2012} concept of topologically contractive IFS was defined, and recently their existence of invariant probability measure was studied in \cite{Krzysztof2020}.

\subsection{Conformal IFS and Teichm\"{u}ller Theory for IFS}
\emph{Conformal IFS} was introduced in \cite{Mauldin1996}. By using the Perron-Frobenius operator, symbolic dynamics on shift space with an infinite alphabet, part of the article's main result was to show the existence of a unique invariant probability measure for conformal IFS with the countable family of self-maps is equivalent of the existence of a conformal measure. For more development of conformal IFS we refer \cite{Mauldin1999, Mauldin2001,  Lindsay2002, Urbanski2002, Kaenmaki2003, Roy2005, Lau2009, Jaerisch2011, Deng2011, Seuret2015, Rempe2016, Mihailescu2016a, Das2021, Spaulding2022}. D. Sullivan in \cite{Sullivan1982} proposed a measurable version of Mostow's rigidity theorem. If two finite Kleinian groups are conjugate under a nonsingular Borel map $F$, then $F$ agrees practically everywhere with conformal conjugacy. Since this theorem's introduction, measurable rigidity has received much attention, and several generalizations and modifications have been produced. \cite{Hanus1998} proved that two non essentially affine, conformal iterated function systems are conformally comparable if their conformal measures coincide up to generator permutation. The paper \cite{Kessebohmer2006} study rigidity and flexibility for conformal-iterated function systems. They demonstrate that two non-essentially affine systems are conformally identical if and only if there is at least one level set in each Lyapunov spectrum where the Gibbs measures coincide. They compared this result to an affine case, concluded that the essentially affine systems are less rigid than non-essentially affine systems, and  studied their flexibility. Teichm\"{u}ller theory is rich and active research area in mathematics \cite{Hubbard2016}. For the first time, \cite{Hille2012} has introduced Teichm\"{u}ller theory for families of conformal iterated function systems.

\section{Opportunities for Further Research}
\label{sec:further}

In the final section, we sketch out several directions for further research and pointers to existing works along these directions. 

\begin{figure}[h!]
    \centering
    \includegraphics[width=1.02\textwidth]{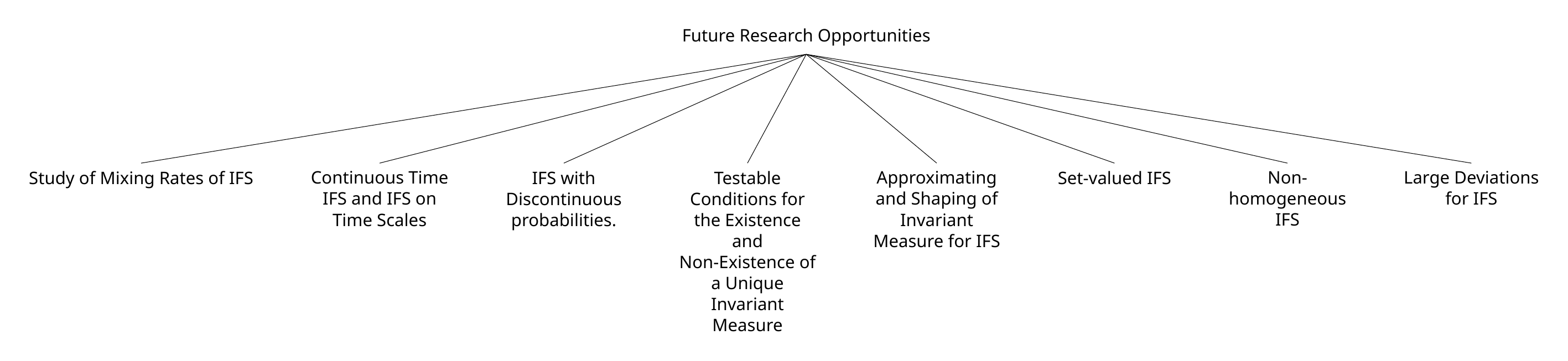}
    \caption{Section \ref{sec:further} of the survey highlights several possible directions for further work.}
    \label{fig:des-3}
\end{figure}

\subsection{Non-asymptotic Study of Mixing Rates in IFS}

It is widely understood that the mixing rate of the IFS is geometric, asymptotically, when there exists a unique invariant measure \cite{Maciej2011} since the corresponding Markov operator is contractive in the Wasserstein norm \cite{Lasota1995}. 
\cite{hairer2002exponential,hairer2006coupling, Hairer2011} have shown how to simplify the analysis using coupling arguments. 

Very recently, \cite{Lesniak2022} showed that the exponent of the rate of convergence could be bounded from below in terms of dimensions of a box enclosing the attractor and from above by various terms, including the Lipschitz constants of the IFS. Related, detailed analyses seem very desirable. 

Within applied probability, detailed analyses of subgeometric rates of convergence under less restrictive assumptions are now appearing \cite{Durmus2016}. These often extend the coupling technique of \cite{hairer2002exponential}. It would be interesting to extend these to iterated function systems.  
See \cite[Chapter 7]{Godland2022} for the first step in this direction. 

\subsection{Continuous-Time IFS and IFS on Time Scales}\label{sec:CTS-IFS}

Continuous-time IFS with place-dependent probabilities
were studied already by \cite{Lasota1999}. They were interested in the biological interpretation and asymptotic stability of IFS and applied stability criteria to describe the stability of cell division structures. The results of Lasota and Mackey were further generalized and extended by \cite{Szarek1999, Szarek2001}, who showed in a general Polish space that such a system is asymptotically stable, and its stationary distribution is singular. A very clear treatment is presented by \cite{Wojewodka2013}.
Applications in biology  \cite[e.g.]{alkurdi2013ergodicity} often rely on these tools, as detailed in Appendix~\ref{subsec:biosc}.

It would be of considerable interest to link the work on continuous-time IFS more closely with the work on stochastic partial differential equations on one hand and with switched systems \cite{liberzon1999basic,zappavigna2010dwell,shorten2007stability} on the other.
In the same spirit, one could develop connections to analysis on time scales, utilizing the tools of \cite{potzsche2003spectral}. See \cite{Ramen2019} for the first small step in this direction. 

\subsection{IFS with Discontinuous Probability Functions}\label{sec:IFS-disc-prob}

Investigating IFS with discontinuous probability functions would be of considerable interest in many practical applications. \cite{Joanna2013} considers an IFS with discontinuous but piecewise constant probability functions was studied and used the \emph{Schauder–Tychonov fixed point theorem} approach, they proved that the system possesses an invariant probability measure. 
\cite{Marecek2022} suggested the use of the machinery of 
\cite{filippov2013differential}.
\cite[Theorem 18]{Fioravanti2019} suggested that one could base an alternative view of IFS with discontinuous probabilities on the work of \cite{Ivan2004, Ivan2005(1), Ivan2005(2), Ivan2005(3), Ivan2005(4), Ivan2005(5), Ivan2005(6), Ivan2006}, although the details remain to be worked out. 

\subsection{Testable Conditions for the Existence and Non-Existence of a Unique Invariant Measure}

In many practical applications, the self-transformations are available only in an oracular fashion, i.e., upon providing index $j \in [m]$ and $X$, one has access to $w_j(X)$. 
In such applications, it may be hard to estimate the moduli of continuity of $w_j, j \in [m]$, to guarantee the contraction on average. Still, a testable condition for a unique invariant measure's (non-)existence would be of considerable interest. 
Perhaps, Malliavin calculus \cite{malliavin1978stochastic,nualart2006malliavin} could provide an avenue towards deriving such conditions. While one could draw connections between Malliavin calculus and IFS following \cite[e.g.]{Gravereaux1988, Viard1994}, much remains to be done in this respect. 

\subsection{Approximating and Shaping of Invariant Measure for IFS}\label{sec:IFS-shaping-inv-meas}

A great deal of interest has been shown in data compression in approximating measures and functions using IFS and other similar approaches. A substantial compression factor may be achieved by representing a target measure or function using a minimal number of IFS parameters, as has been shown in the case of image processing, see, e.g., \cite{Jacquin1988, Jacquin1990(1), Jacquin1990(2), Jacquin1990(3), Jacquin1992, Jacquin1994}.

\cite{Forte1995} state that the inverse problem of measure construction using IFS is as follows: given a target probability measure $\nu\in \mathcal M_p(\mathcal X)$, can we construct using a Markov operator $\mathrm{P}$ an IFS consisting of $m$ functions with constant probabilities $\{p_i\}_{i=1}^{m}$ that has an invariant probability measure of $\nu^{\star}$ that is close to the desired $
\nu$ value, i.e $d_{\text{h}}(\nu^{\star},\nu)<\epsilon$? Definition of \emph{Hutchinson metric} $d_{\text{h}}$ can be found in \cite[Definition 5.2, Chapter IX]{Barnsley1993} is the same as that in \eqref{eq:ch2-Wass-dist}. A solution to the aforesaid problem by using IFS and moment matching process was  proposed by \cite{Barnsley1985, Barnsley1986} and \cite[Chapter 12]{Lasserre2003}  discusses the problem from \emph{optimization} point of view. We recommend that the reader consult \cite{Widder1945, Stieltjes1895, Baker1970, Bessis1991} for further information on the research, the connection between moments and measures and some significance of this problem. An extension of the same problem to the IFS with state-dependent probabilities has been done, and when $\mathcal X=[0,1]$, the solution of the above problem has been achieved in \cite{Mendivil2018}. For more on IFS, the inverse problem of measure and moments, we refer \cite{Elton1989, Diaconis1986, Abenda1990, Abenda1992, Cabrelli1992, Vrscay1991(1), Vrscay1991(2), Vrscay1989, Kunze2012}. The inverse problem for continuous probability measure on $\mathbb R$ has been successfully addressed by \cite{Stenflo2012} and the construction of asymptotically stable IFS has been described in  \cite{Joanna2002, Guzik2016(2)}. 

Let $w$ be a self-map on $(0,\infty)$ and $\{\sigma_k\}_{k\ge 0}$ be i.i.d $(0,\infty)$ valued random variables. \cite{Chamayou1991} studied several explicit invariant probability measures for the Markov chain of the form:
\begin{align}
X_{k+1}=\sigma_{k}(w X_{k}).
\end{align}
When $w$ is a Mobius transformation, its outcome is a particular product of random $2\times 2$ matrices and random continuing fractions when generic exponential families on $(0,+\infty)$ are considered.
Invariant measures of dynamical systems have been researched extensively in the literature. It has been shown that certain algorithms converge to the invariant measure (within the error margins of specific metrics) for certain systems. Asymptotic convergence estimate rates are sometimes provided, see, e.g., \cite{Ding1993, Froyland2007, Murray2010,  Bose2001, Ding1994,  Dellnitz1999,  Dellnitz2002}, however, there are a few results that provide an explicit (rigorous) error bound, see, e.g., \cite{Bahsoun2011, Ippei2011, Liverani2001, Keane1998,  Pollicott2000} and more recent work by \cite{Galatolo2014, Galatolo2009, Galatolo2016}. When the maps are multi-dimensional, \cite{Ding1994, Ding1995} has results on approximating physical invariant measure, which was further generalized by \cite{Froyland1995(1), Froyland1995(2),  Froyland1997(1), Froyland1997(2), Froyland1998} in a series of papers.

When there is a continuous unique invariant measure for a discrete-time dynamical system on a compact metric space arising from IFS, \cite{Hunt1992, Hunt1996(1), Hunt1996(2)} produces techniques for approximating these invariant measures and their convergence rate through a method earlier proposed by \cite{Ulam1960}. For the method description, see \cite[section 11.3]{Van2002}. We refer to several other excellent references for approximating the invariant measure of dynamical systems and invariant measure we refer \cite{Miller1994, Ding2019, Diamond1995, Froyland1999, Froyland2000, Imkeller2003, Boyarsky2012, Christoph2017}. Under some assumptions, in \cite{Robert1995}, invariant measures associated with IFS with finitely many maps on a compact subset of $\mathbb R$ were well approximated. These results were further generalized, and some algorithms were also developed for approximating invariant measures for IFS on higher dimensions in \cite{Oberg2005, Oberg2006}. \cite{Gora2003} has numerical studies on absolutely continuous invariant measures for state-dependent IFS on $[0,1]$, and these results were further extended in \cite{Bahsoun2011, Bahsoun2005(1)}. An interesting observation in \cite{Gora2006} is that there may be a situation where absolutely continuous invariant measures can not be observed experimentally. For more results on the absolutely continuous invariant measure, we refer \cite{Boyarsky1990, Islam2015}  and the references there.

\subsection{Set-valued IFS}

IFS over the space of set-valued self-mappings or multi-functions was re-introduced in \cite{Torre2007, Torre2008, Kunze2008, Colapinto2008} which could be traced back due to \cite{Hutchinson1981}. Fractals and multi-valued fractals are essential in biology, quantum mechanics, computer graphics, dynamical systems, astronomy, and geophysics, see \cite{El1994, Iovane2006(1), Iovane2006(2), Fedeli2005, Shi2004}. IFS over multi-functions have substantial repercussions in applied sciences. In \cite{Llorens2009}, the existence and uniqueness of self-similar mixed-iterated function sets are provided, and the well-posedness of the self-similarity issue for various iterated multi-function systems is also explored. Well-posedness is intimately connected to approximating a fixed point equation, which is vital for constructing fractals using pre-fractals. Due to the wide range of influential application research on set-valued IFS is still active \cite{Nia2015, Guzik2016(1), Georgescu2020, Goyal2021, Pandey2022, Russell2018}.

\subsection{Non-homogeneous IFS}\label{sec:non-hmo-IFS}
A significant portion of the IFS literature assumes time-homogeneous probabilities on drawing functions. In addition, convergence results are obtained within the assumption of certain average cotractivity.  \cite{Stenflo1998} and later \cite{Mendivil2015} have convergence results based on time-dependent and time-varying probabilities. These time-varying assumptions on probabilities have been missed in the IFS literature. The results radically different from those obtained in the conventional time-homogeneous contractive IFS. In \cite{lu1997}, a necessary condition for the existence of a $\sigma$-finite invariant measure for Markov chains with random transitions was discovered, along with probabilities demonstrating the significance of attractors for particular IFS to their invariant measures. In \cite{Ramen2019}, the notion of time-varying piecewise IFS was presented in Polish space. 

\subsection{Large Deviations for IFS}\label{sec:Ldevi-IFS}
Recently \emph{large deviations\cite[Feller XVI.7]{Feller2008} probabilities and their applications} have received a considerable amount of attention; for example, in probability and statistics \cite{Varadhan1984, Hollander2000, Deuschel2001, Dupuis2011}, information theory and queuing theory \cite{Shwartz1995}, statistical mechanics \cite{Touchette2009, Oono1989}, DNA analysis, communications and control \cite{Amir2010, Freidlin2012}. In a dynamical system, the theory of large deviations allows one to find the likelihood that the ergodic average $\frac{1}{n}\sum\limits_{i=0}^{n-1}f(T^k(x))$ deviate from the mean $\int f d \mu=0$.

\cite[Chapter VIII]{Hennion2001} have obtained such results using transfer operators, and \cite{Pollicott2001} did the same for IFSs that contract on average. However, it should be mentioned that Pollicott assumes, without complete proof, that the perturbed transfer operators are analytic perturbations of the unperturbed transfer operator, as noted in \cite[Page-1940]{Hennion2004}. \cite{Broise1996} looked at such results for piecewise expanding maps and was able to say that, like many large deviations results, the probability that ergodic averages deviate from the mean decays exponentially fast. Motivated from the works like \cite[Chapter 10]{Broise1996, Hennion2004, Hennion2001}, and \cite{Santos2013,Bezuglyi2018}, \cite[Chapter 3]{Anthony2015} has several results  for large deviations for \emph{IFS that contract on average}.

\section{In Place of Conclusions}

The survey outlined the development of a relatively simple notion of \emph{contraction on average} into substantial literature on iterated function systems. Indeed, our survey alone covers over 400 papers, several books, and a steady stream of PhD dissertations in Applied Probability and related areas of Mathematics:  \cite{Giles1990, Daryl1991, Hepting1991(2), Vines1993, Christine1995, Stenflo1999, Hanus2000, Dai2001, Kapica2003, Niclas2005(1), Doan2009, Worm2010, Vass2013, Shaun2015, Anthony2015, Davison2015, Helena2016, wojewodka2016phd, Masoumeh2017, Godland2022}. We suggest the breadth of the literature utilizing iterated function systems in several application domains within the appendices.

\bibliographystyle{apalike}
\bibliography{Thesis}
\clearpage
\appendix
\section{Some Applications of IFS}\label{sec:IFS-appli}

\subsection{Physics, Mathematics and Engineering}\label{subsec:phy-math-eng}
The book of \cite{Tong90} contains a wide range of practical examples of the use of IFS in Statistics.
See \cite{Stenflo2002} for a survey of place-dependent IFS and connections to the \emph{Ruelle-Perron-Frobenius theorem of statistical mechanics}, \cite{Furstenberg1960, Guivarc1985, Mukherjea1992, Alsmeyer2013, Kaijser1978} for connections to the \emph{theory of random walk in non-negative matrices, theory of random matrices and the products of random matrices}, \cite{Iosifescu1990} for connections to the \emph{theory of continued fractions}, \cite{Propp1996, Athreya2003(3), Roberts2004} for a connection with \emph{Markov chain Monte-Carlo algorithms}, \cite{Falconer1985, Falconer2004, Barnsley1993} for more on connections to the \emph{theory to fractals}, \cite{Mandelbrot1982, Peters1994, Kifer1995, Peters1996, Bratteli1999, Jorgensen2006, Fisher2012, Gleick2011, Peitgen2006, Loskot2015} for connections to \emph{representation theory}, \cite{Davison2015,Davison2018} in \emph{operator theory}, \cite{Dutkay2007(1),Dutkay2007(2),Mirek2011,Jorgensen2011} in \emph{harmonic analysis}, \cite{Berger1992} for the \emph{theory of wavelets}, Keller \cite{Keller1998} and \cite{Pesin2008} for connections to the \emph{theory of deterministic dynamical systems}. For a number research articles on IFS on graphs see \cite{Ivan2004, Ivan2005(1),Ivan2005(2), Ivan2005(3), Ivan2005(4),Ivan2005(5), Ivan2005(6), Ivan2006}, on stochastic fixed point theory see \cite{Hermer2019, Hermer2020}. Articles which treat some particular type of IFS to mention a few such as: \emph{random walk on $\mathbb R^n$ generated by affine maps} by \cite{Berger1988}, \emph{random walk on $({\mathbb R}^{+})^n$ with elastic collisions on the axis}  \cite{Leguesdron1989}, Peigne \cite{Peigne1992}, \cite{Mauldin2000} discusses \emph{parabolic IFS} and \cite{Vince2013} explored several interesting results on \emph{Mobius IFS}, Fayolles, Malyshev, and Iasnogorodski \cite{Fayolle1999}, concerning quantum information theory \cite{Naschie1994, Wojciech1997, Karol2003, Arkadiusz2004, Baraviera2009, Jurgens2020}, in mathematical finance \cite{Anatoly2011, Anatoly2013}, to other branches of physics \cite{Decrouez2009, Fatehinia2018, Davison2018}. In Control of switched systems \cite{Branicky1998},  signal modeling  \cite{Vines1993, Freeland1991}, improve the gain and efficiency of the antenna \cite{Ezhumalai2021}, to model discrete sequence \cite{David1992}, in power domain and domain theory \cite{Edalata1996, Edalat1995}, electrical circuits \cite{Chen2002}, and in TCP \cite{Niclas2005(3)}.  

\subsection{Computer Science and Computer Graphics}\label{subsec:cs}
IFS was used to investigate stochastic learning automata, see, e.g., \cite{Narendra1974, Narendra2012, Lakshmivarahan2012, Norman1968, Norman1972, Karlin1953}. IFS on the space of input weights was used to develop an associative reinforcement learning theory in neural networks \cite{Bressloff1991, Bressloff1992(1), Bressloff1992(2)}. A recurrent neural network is an artificial neural network in which node connections generate a temporally directed graph \cite{Tino1998}. A recurrent neural network is an artificial neural network whose node connections produce a temporal graph. In \cite{Kolen1994}, IFS is used to characterize the behaviour of recurrent networks in symbolic processing. 

\cite{Stark1991} analyzed various methods for producing images using IFS and demonstrated how they might be described as neural networks with one neuron per image pixel. Such networks can quickly produce complex graphics for real-time animation. IFS application is known in pattern recognition also, see, e.g. \cite{Roman1994}. \cite{Sophie2011} devised a low-complexity, recursive approach for linear deconvolution systems using binary input using IFS.

IFS encodes and creates fractals;  Barnsley showed how to compress images.
Many articles address how IFS with affine maps may be used to draw and compress images, see, e.g., \cite{Hutchinson1981, Dubins1966, Diaconis1986, Barnsley1993, Barnsley1990, Kouzani2008, Peruggia1993, Hepting1991(1), Hepting1991(2), Berger1989}. Many works in the literature on fractals have developed in the past few decades. It has a very deep-rooted connection with the IFS see \cite{Williams1971, Abraham2000}  historical remark on fractals, and for other development and applications, see \cite{Crilly1991, Barnsley1989, Barnsley2008, Natoli2012, Sprott1994, Jones2001, Ankit2014, Mitra2000, Alexander2012}, on computer graphics \cite{Barnsley1993, Jacquin1988, Davoine2010, Nikiel2007, Kunze2012, Chand2007}.

\subsection{Biological Science}\label{subsec:biosc}
Andrzej Lasota and his group have made seminal contributions in a series of articles \cite{Lasota1973, Lasota1984, Tyrcha1988, Lasota1992, Lasota1994(1), Lasota1995, Lasota1998, Lasota1999} in modelling cell division processes using techniques from IFS and Markov operators taking values in Polish state space.
Using the IFS model, they studied general intracellular biochemistry. Each cell has $d$ molecules whose dynamic evolution governs its life history. They showed that cell division might create functionally and physically stable cells despite unstable molecular dynamics. Fast cell multiplication stabilizes cellular populations according to their stability. For more on the application of IFS and, in general, random dynamical systems in the field of Biological science, see \cite{Tyson1986, Doebeli1995, Traple1996, Lankhorst1996, Anatoly1997, Anatoliy2003, Joseph2006, Almeida2009, Almeida2014, Alfredo2017, Gaspard2017, Anatoliy2003, Ashlock2003, Hoskins1995, Lasota1999, Wojewodka2013, Rudnicki2000, Rudnicki2002, Wojewodka2013, Hille2016, Rudnicki2015, Czapla2019, Czapla2020}.

\end{document}